\documentclass[reqno,11pt]{amsart}

\usepackage{times,amsmath,cancel,stmaryrd,graphicx}
\usepackage{amsfonts,enumitem,amssymb,color,mathrsfs}
\usepackage[colorlinks=true]{hyperref}
\usepackage{cleveref,enumerate}
\usepackage{lmodern}
\usepackage[dvipsnames]{xcolor}
\usepackage{comment}
\usepackage{tikz}
\usetikzlibrary{arrows.meta,calc,decorations.markings}
\usepackage{cite}

\usepackage{float}

\newtheorem{thm}{Theorem}[section]
\newtheorem{lem}[thm]{Lemma}

\newtheorem{prop}[thm]{Proposition}

\numberwithin{equation}{section}

\newcommand{\aequation}{\renewcommand{\theequation}{\mbox{A.\arabic{equation}}}}

\newcommand{\nequation}{\setcounter{equation}{0}}

\newcommand{\R}{\mathbb{R}}

\newcommand{\E}{\mathbb{E}}

\newcommand{\rd}{\mathrm{d}}

\newcommand{\dhr}{\mathrel{\lhook\joinrel\relbar\kern-.8ex\joinrel\lhook\joinrel\rightarrow}}

\allowdisplaybreaks

\begin{document}

\title[Equilibria in a Biofilm Model]{Stability of Equilibria in a Biofilm Reactor Model with Wall Attachment and Thermodynamic Growth Inhibition}


%
\author{Katerina Nik}
\address{King Abdullah University of Science and Technology (KAUST)\\
CEMSE Division\\
Thuwal 23955-6900\\
Saudi Arabia}
\email{katerina.nik@kaust.edu.sa}

\author{Christoph Walker}
\address{Leibniz Universit\"at Hannover\\
Institut f\"ur Angewandte Mathematik\\
Welfengarten 1\\
30167 Hannover\\
Germany}
\email{walker@ifam.uni-hannover.de}
\date{\today}

\begin{abstract}
The dynamics of a mathematical model for a chemostat-type reactor is investigated. The model describes the temporal evolution of suspended and wall-attached bacterial populations, with the latter represented as a one-dimensional biofilm, subject to a non-reproducing growth-limiting substrate and a reaction product formed through bacterial utilization of the substrate. In particular, it is shown that, in the regime where the trivial (washout) equilibrium is unstable, there exists a unique nontrivial equilibrium that is locally asymptotically stable. Under slightly stronger assumptions, uniform persistence and global asymptotic stability of the nontrivial equilibrium are established.
\end{abstract}
%
\keywords{Biofilm, uniform persistence, nontrivial equilibrium, global stability}
\subjclass[2020]{37N25,92D25,34C11}
\maketitle
\section{Introduction}

Biofilms are structured microbial communities that develop when suspended microorganisms attach to surfaces or aggregate with other cells. In biotechnological reactors, microorganisms transform chemical compounds through biochemical processes, thereby enabling growth and substrate degradation. Depending on the reactor environment, microbial biomass can exist either as suspended planktonic cells in the liquid phase or as attached microbial communities within a biofilm. These two forms differ substantially in their local environmental conditions, transport limitations, and metabolic activity.
Suspended and attached microbial populations can be considered as interacting compartments coupled through attachment and detachment processes: planktonic cells may be transferred from the bulk liquid phase to established biofilms, while sessile cells can detach and re-enter the suspended phase. Such exchange mechanisms strongly influence population dynamics, community composition, and the long-term behavior of microbial systems. We refer to~\cite{Freter,Jones03,WannerGujer86,WannerEtAl06,KD10} for established biofilm modeling frameworks.\\

In this work, we investigate a mathematical model of a chemostat-type reactor containing suspended and wall-attached bacterial populations while accounting for thermodynamic growth inhibition. The model was originally introduced in \cite{GHE21} and extends the biofilm reactor model developed in \cite{MasicEberl12}, which, in turn, is based on the classical Freter model~\cite{Freter,Jones03,BallykJonesSmith,Stemmons} and includes additional dynamical features. It describes the temporal evolution of suspended and attached bacterial populations, with the latter represented as a one-dimensional bacterial biofilm. In addition, the model accounts for two chemical species: a non-reproducing, growth-limiting substrate (propionate) and a reaction product (acetate) formed through bacterial utilization of the substrate.\\

More precisely, in this work we focus on the system of boundary value problems
\begin{subequations}\label{C}
\begin{align}
\kappa_1\partial_z^2 c_1(t,z)&=r\big(c_1(t,z),c_2(t,z)\big)\,, \qquad\qquad\qquad\qquad 0<z<h(t)\,, \label{C1a}\\
\partial_z c_1(t,0)&=0\,, \qquad  c_1\big(t,h(t)\big)=S_1(t)\,,\label{C2a}\\
\kappa_2\partial_z^2 c_2(t,z)&=-r\big(c_1(t,z),c_2(t,z)\big)\,,\qquad\qquad\qquad\qquad 0<z<h(t)\,,  \label{C1b}\\
\partial_z c_2(t,0)&=0\,, \qquad  c_2\big(t,h(t)\big)=S_2(t)\,,\label{C2b}
\end{align}
for the unknowns $c_1=c_1(t,z)$ and $c_2=c_2(t,z)$, representing the propionate and acetate concentrations within the biofilm at location $z$, respectively, 
the total biofilm thickness $h=h(t)$,  as well as the propionate concentration 
$S_1=S_1(t)$ and the acetate concentration $S_2=S_2(t)$ in the aqueous phase. The latter three unknowns are coupled to  the suspended biomass $Q=Q(t)$ in the aqueous phase and governed by the system of ordinary differential equations
\begin{align}
h'(t)&=k_1\int_0^{h(t)}  r\big(c_1(t,z),c_2(t,z)\big) \rd z -bh(t)+\frac{\alpha}{\beta} Q(t)-d\big(h(t)\big)h(t)\,, \label{C3}\\
S_1'(t)&=D\big(S^*-S_1(t)\big)-r\big(S_1(t),S_2(t)\big)Q(t)-\beta\int_0^{h(t)} r\big(c_1(t,z),c_2(t,z)\big) \rd z \,, \label{C4a}\\
S_2'(t)&=-DS_2(t)+r\big(S_1(t),S_2(t)\big)Q(t)+\beta\int_0^{h(t)} r\big(c_1(t,z),c_2(t,z)\big) \rd z \,, \label{C4b}\\
Q'(t)&= \big(k_1r\big(S_1(t),S_2(t)\big)-k_2-\alpha\big)Q(t)+\beta d\big(h(t)\big)h(t)\,, \label{C5}
\end{align}
for $t>0$ subject to the nonnegative initial conditions
\begin{align}
h(0)&=h^0\,,\quad S_1(0)=S_1^0\,,\quad S_2(0)=S_2^0\,,\quad Q(0)=Q^0 \label{C7}\,.
\end{align}
\end{subequations}
The functions $r$ and $d$ represent the thermodynamic growth function of the bacteria and the detachment rate of bacteria from the biofilm into the aqueous phase, respectively. Moreover, $S^*>0$ denotes the constant influent propionate concentration, and $b, D, k_1, k_2, \kappa_1, \kappa_2, \alpha, \beta$ are positive, re-scaled constants (with $\kappa_2>\kappa_1$ being associated with acetate and propionate diffusion, respectively).  The situation is illustrated in Figure~\ref{fig:reactor}.
We refer to \cite{GHE21} (and the references therein) for a more detailed account of the modeling aspects and the precise meanings of the constants.\\

The global well-posedness of~\eqref{C} was established in~\cite{GHE21} for a particular choice of the thermodynamic growth function $r$ (see~\eqref{R:kinetics} below). In the same work, the local stability of the washout equilibrium $(h,S_1,S_2,Q)=(0,S^*,0,0)$ was analyzed, together with further qualitative properties of the long-term dynamics, supported by numerical simulations. Based on computational evidence, the existence of a unique stable persistence equilibrium was conjectured in \cite[Conjecture 1]{GHE21}, although a rigorous proof was not provided. It is worth pointing out that the complexity of the model makes analytical results concerning the persistence equilibrium challenging to obtain.

The main objective of the present work is to establish rigorously the existence of a (unique) persistence equilibrium and to analyze its stability in the regime where the washout equilibrium is unstable. To this end, we employ a shooting argument to prove existence and uniqueness of the nontrivial equilibrium. 
This part builds upon our previous works \cite{GW_QAM,NW26}, where related one-dimensional biofilm models were investigated.
 Furthermore, we  derive its local asymptotic stability by means of the principle of linearized stability. Under slightly stronger assumptions, we also establish its global stability relying on uniform persistence theory for semiflows \cite{HaleWaltman1989,SmithThieme2011} and
 using the geometric approach of  Li and Muldowney~\cite{LiMuldowney1996} in order to exclude non-constant periodic orbits  as well as homoclinic and heteroclinic orbits in the dynamics.\\

\begin{figure}[h]
\centering
\definecolor{bulkaqua}{RGB}{222,240,245}
\resizebox{\textwidth}{!}{%
\begin{tikzpicture}[line cap=round,line join=round,>=Latex,font=\small]
\begin{scope}[shift={(-6.2,0)},scale=0.6]
  \def\W{5.0}\def\H{4.2}\def\E{0.55}\def\level{0.8}\def\film{0.30}
  \def\wall{0.10}\def\bio{0.20}
  \pgfmathsetmacro{\yLvl}{\H-\level}
  \def\Cyl{(-\W/2,\H) arc[start angle=180,end angle=360,x radius=\W/2,y radius=\E] -- (\W/2,0) arc[start angle=0,end angle=180,x radius=\W/2,y radius=\E] -- cycle}
  \begin{scope}
    \clip \Cyl;
    \fill[bulkaqua] (-\W/2,-1) rectangle (\W/2,\yLvl);
    \fill[gray!55] (-\W/2,-1) rectangle (-\W/2+\wall,\yLvl);
    \fill[gray!12] (-\W/2+\wall,-1) rectangle (-\W/2+\wall+\bio,\yLvl);
    \fill[gray!55] (\W/2-\wall,-1) rectangle (\W/2,\yLvl);
    \fill[gray!12] (\W/2-\wall-\bio,-1) rectangle (\W/2-\wall,\yLvl);
  \end{scope}
  \fill[bulkaqua] (0,0) ellipse[x radius=\W/2,y radius=\E];
  \draw (-\W/2,0)--(-\W/2,\H);  \draw (\W/2,0)--(\W/2,\H);
  \draw (-\W/2,0) arc[start angle=180,end angle=360,x radius=\W/2,y radius=\E];
  \draw[dashed] (\W/2,0) arc[start angle=0,end angle=180,x radius=\W/2,y radius=\E];
  \draw (-\W/2,\H) arc[start angle=180,end angle=360,x radius=\W/2,y radius=\E];
  \draw[dashed] (\W/2,\H) arc[start angle=0,end angle=180,x radius=\W/2,y radius=\E];
  \draw[gray!55] (-\W/2,\yLvl) arc[start angle=180,end angle=360,x radius=\W/2,y radius=\E];
  \draw[gray!55,dashed] (\W/2,\yLvl) arc[start angle=0,end angle=180,x radius=\W/2,y radius=\E];
  \draw[fill=white,line join=miter] (-3.5,\H+1.0)--(-0.9,\H+1.0)--(-0.9,\yLvl+0.05)--(-0.74,\yLvl+0.05)--(-0.95,\yLvl-0.4)--(-1.16,\yLvl+0.05)--(-1.05,\yLvl+0.05)--(-1.05,\H+0.85)--(-3.5,\H+0.85)--cycle;
  \node[font=\scriptsize,align=center,anchor=south] at (-2.2,\H+1.05){fresh medium $S^*$};
  \draw[fill=white,line join=miter] (1.05,\yLvl-0.4)--(1.05,\H+0.85)--(3.1,\H+0.85)--(3.1,\H+0.68)--(3.5,\H+0.925)--(3.1,\H+1.17)--(3.1,\H+1.0)--(0.9,\H+1.0)--(0.9,\yLvl-0.4)--cycle;
  \node[font=\scriptsize,align=center,anchor=south] at (1.95,\H+1.2){washout $(S_1,S_2)$};
  \node[align=center,font=\scriptsize\itshape] at (0,1.75){well-mixed\\ bulk};
  \node[anchor=east,align=right,font=\scriptsize] at (-\W/2-0.35,0.95){biofilm of\\ thickness $h$};
  \draw[-Latex] (-\W/2-0.3,0.95)--(-\W/2+\wall+\bio/2,0.95);
  \coordinate (zc) at (\W/2-\film/2,1.9);
  \draw[thick] (zc) circle (0.42);
\end{scope}
\draw[densely dashed,gray!60] (zc) -- (-1.6,3.7);
\draw[densely dashed,gray!60] (zc) -- (-1.6,-0.5);
\begin{scope}[shift={(-1.6,-0.5)}]
  \def\Lh{3.0}\def\Ct{4.2}\def\Bw{2.0}\def\Buni{1.1}
  \fill[bulkaqua] (\Lh,0) rectangle (\Lh+\Buni,\Ct);
  \shade[left color=bulkaqua,right color=white] (\Lh+\Buni,0) rectangle (\Lh+\Bw,\Ct);
  \fill[gray!12] (0,0) rectangle (\Lh,\Ct);
  \fill[gray!55] (-0.4,0) rectangle (0,\Ct);
  \draw[line width=1.1pt] (0,0)--(0,\Ct);
  \foreach \y in {0.2,0.55,...,4.1} \draw[gray!75,line width=0.4pt] (-0.4,\y)--(0,\y-0.2);
  \draw[densely dashed] (\Lh,0)--(\Lh,\Ct);
  \draw[blue!60!black,line width=1pt] (0,1.8) .. controls (0.6364*\Lh,1.9) and (0.8788*\Lh,2.55) .. (\Lh,3.2);
  \fill[blue!60!black] (\Lh,3.2) circle(1.2pt);
  \node[blue!60!black,font=\tiny,anchor=west] at (\Lh+0.06,3.2){$S_1$};
  \node[blue!60!black,font=\tiny,anchor=south] at (0.4848*\Lh,2.3){$c_1$};
  \draw[red!65!black,line width=1pt] (0,1.6) .. controls (0.4848*\Lh,1.52) and (0.7879*\Lh,1.05) .. (\Lh,0.6);
  \fill[red!65!black] (\Lh,0.6) circle(1.2pt);
  \node[red!65!black,font=\tiny,anchor=west] at (\Lh+0.06,0.6){$S_2$};
  \node[red!65!black,font=\tiny,anchor=north] at (0.4242*\Lh,1.1){$c_2$};
  \node[font=\tiny,rotate=90,gray!45!black] at (-0.22,\Ct/2){substratum};
  \node[font=\tiny,align=center] at (\Lh/2,0.28){biofilm};
  \node[font=\tiny,align=center,blue!30!black,anchor=west] at (\Lh+0.15,\Ct-0.5){well-mixed bulk\\[-1pt]$Q,S_1,S_2$};
\end{scope}
\end{tikzpicture}%
}%
\caption{The reactor~\eqref{C} together with a magnification of its wall. \textbf{Left:} the continuously stirred
tank, fed fresh medium of propionate at the inflow concentration $S^*$ and dilution rate $D$. The tank is well
mixed, so the bulk propionate $S_1$, acetate $S_2$ and suspended biomass $Q$ depend on $t$ alone. The
dissolved $S_1,S_2$ are washed out with the effluent at rate $D$, and the biomass $Q$ is removed at rate $k_2$.
A biofilm of thickness $h$ grows on the wall. \textbf{Right:} the biofilm across its
depth $0\le z\le h$. At the wall $z=0$, no concentration flux occurs, i.e. $\partial_z c_i(0)=0$, while at the interface $z=h$, each concentration matches its bulk value, i.e. $c_i(h)=S_i$. 
}
\label{fig:reactor}
\end{figure}

The outline of this paper is as follows. In Section~\ref{Sec2}, we first recall auxiliary results for the subproblem~\eqref{C1a}–\eqref{C2a} and its rescaled formulation~\eqref{U}, which are then used to establish the global well-posedness of~\eqref{C} and needed later on. In Section~\ref{Sec3}, we provide conditions for the local and global stability of the trivial (washout) equilibrium. In the regime where this equilibrium is unstable, we prove the existence of a unique persistence equilibrium in Section~\ref{SS4.2} and establish its local stability in Section~\ref{Sec5}; the more lengthy calculations are postponed to Appendix~\ref{App}. In Section~\ref{sec:6}, we show uniform persistence of solutions, which is then used in Section~\ref{sec:7} to prove global stability of the nontrivial equilibrium under more restrictive assumptions.  Finally, Appendix~\ref{AppN} presents simulations illustrating the stability analysis of the washout and persistence equilibria.  \\

We conclude this introduction with the general assumptions that are imposed throughout this paper. We assume that
\begin{subequations}\label{GAshooting}
\begin{align}
&r\in  {\rm C}^1( \R_+^2,\R_+)\,,\qquad d\in {\rm C}^1(\R_+,\R_+)\label{r1}
\end{align}
satisfy
\begin{align}
r(0,y)=0\,,\quad \partial_2 r(x,y)< 0<\partial_1 r(x,y)\,,\quad (x,y)\in (0,\infty)^2\,,\label{r2}
\end{align}
where $\R_+=[0,\infty)$, and
\begin{align}
d'(x)\ge 0\,,\quad x\ge 0\,, \qquad  d(x)>0\,,\quad x>0\,. \label{d}
\end{align}
For the diffusion coefficients $\kappa_1$ and $\kappa_2$ of propionate and acetate, respectively, we assume that
\begin{align}\label{kappa}
\kappa_2>\kappa_1\,.
\end{align}
Finally, we always consider nonnegative initial values
\begin{equation}
\big(h^0,S_1^0,S_2^0,Q^0\big)\in\R_+^4\,.
\end{equation}
\end{subequations}
Additional assumptions will be introduced later where required.\\

The conditions~\eqref{r1}--\eqref{kappa} on $r$ and $d$  are imposed for simplicity rather than optimality. In~\cite{GHE21}, the thermodynamic growth function is chosen as a Monod rate weighted by a thermodynamic inhibition factor of the form
\begin{equation}\label{R:kinetics}
g(S_1,S_2)=\Big[\mu\,\frac{S_1}{K+S_1}\Big(1-\Gamma\,\frac{S_2}{S_1}\Big)\Big]_+  
\end{equation}
with $\mu,K,\Gamma>0$, which satisfies the strict monotonicities $\partial_2 g<0<\partial_1 g$ on its active regime $\{\Gamma S_2<S_1\}$.

\section{Well-Posedness}\label{Sec2} 

We give a brief sketch of the global well-posedness of~\eqref{C} and provide some additional properties of the solution required later on. 
It is sometimes convenient to write~\eqref{C1a}-\eqref{C2b} on the fixed interval $[0,1]$ by introducing dimensionless variables  (neglecting time-dependence for a moment)
\begin{align}\label{dvar}
y=\frac{z}{h}\quad \text{ and }\quad u_i(y)=c_i\bigl(yh\bigr)\,,
\end{align}
that is,
\begin{subequations}\label{U}
\begin{align}
\kappa_1 \partial_{y}^2u_1&= h^2r(u_1,u_2)\,, \quad 0<y<1\,,\quad \label{BBB2}\\
\partial_{y}u_1(0)&=0\,,\quad u_1(1)=S_1\,, \label{BBB4}\\
\kappa_2 \partial_{y}^2u_2&= -h^2r(u_1,u_2)\,, \quad 0<y<1\,,\quad \label{BBB2x}\\
\partial_{y}u_2(0)&=0\,,\quad u_2(1)=S_2\,. \label{BBB4x}
\end{align}
\end{subequations}
Note that the subproblem~\eqref{C3}-\eqref{C7} now becomes
\begin{subequations}\label{UU}
\begin{align}
h'(t)&=k_1h(t)\int_0^{1}r\big(u_1(t,y),u_2(t,y)\big)\, \rd y-bh(t) +\frac{\alpha}{\beta} Q-d\big(h(t)\big)h(t)\,, \label{C3x}\\
S_1(t)'&=D\big(S^*-S_1(t)\big)-r\big(S_1(t),S_2(t)\big)Q(t)-\beta h(t)\int_0^{1} r\big(u_1(t,y),u_2(t,y)\big) \, \rd y \,, \label{C4ax}\\
S_2(t)'&=-DS_2(t)+r\big(S_1(t),S_2(t)\big)Q(t)+\beta h(t) \int_0^{1} r\big(u_1(t,y),u_2(t,y)\big)\, \rd y \,, \label{C4bx}\\
Q'(t)&= \big(k_1r\big(S_1(t),S_2(t)\big)-k_2-\alpha\big)Q(t)+\beta d\big(h(t)\big)h(t)\,, \label{C5x}
\end{align}
for $t>0$ subject to the nonnegative initial conditions
\begin{align}\label{C7x}
h(0)&=h^0\,,\quad S_1(0)=S_1^0\,,\quad S_2(0)=S_2^0\,,\quad Q(0)=Q^0 \,.
\end{align}
\end{subequations}
As for the subproblem~\eqref{U} we note:

\begin{prop}\label{P1}
Assume \eqref{GAshooting}. There exists  a mapping
$$
\Bigl[ (h,S_1,S_2)\mapsto u[h,S_1,S_2]\Bigr]\in {\rm C}^1\big(\R_+^3,
{\rm C}^2([0,1],\R^2)\big)
$$
such that $$u=(u_1,u_2)=u[h,S_1,S_2]\in {\rm C}^2([0,1],\R^2)$$ is the unique solution to the boundary value problem~\eqref{U}.
For $(h,S_1,S_2)\in\R_+^3$ it holds that
\begin{subequations}\label{ccc}
\begin{align}
&0\le u_1(y)\le S_1\,,\quad S_2\le u_2(y)\le \frac{\kappa_1}{\kappa_2}S_1+S_2\,,\qquad y\in [0,1]\,,\label{ccc1}\\
& \kappa_1 u_1(y)+ \kappa_2 u_2(y)= \kappa_1 S_1+\kappa_2 S_2\,,\quad  y\in [0,1]\,. \label{ccc3}
\end{align}
\end{subequations}
Moreover, if $h>0$, then
\begin{align}\label{G}
\frac{\partial}{\partial {S_1}}\int_0^1 r\big(u[h,S_1,S_2](y)\big) \rd y \ge  \frac{\partial}{\partial {S_2}}\int_0^1 r\big(u[h,S_1,S_2](y)\big) \rd y
\end{align}
and
\begin{align}\label{GR}
\frac{\partial}{\partial h}\int_0^1 r\big(u[h,S_1,S_2](y)\big)\,\rd y\le 0\,.
\end{align}
\end{prop}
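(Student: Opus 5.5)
The plan is to reduce \eqref{U} to a single scalar equation through a conservation law, and then to obtain all remaining claims from maximum-principle arguments applied to that equation and to its derivatives with respect to the parameters.

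\emph{Conservation law and reduction.} If $(u_1,u_2)$ solves \eqref{U}, then adding \eqref{BBB2} and \eqref{BBB2x} shows that $w:=\kappa_1u_1+\kappa_2u_2$ satisfies $w''=0$, $w'(0)=0$ and $w(1)=\kappa_1S_1+\kappa_2S_2=:M$. Hence $w\equiv M$, which is \eqref{ccc3}. We may therefore substitute $u_2=(M-\kappa_1u_1)/\kappa_2$ and study the scalar problem
$$
\kappa_1u_1''=h^2f(u_1)\,,\qquad u_1'(0)=0\,,\qquad u_1(1)=S_1\,,
$$
where $f(x):=r\big(x,(M-\kappa_1x)/\kappa_2\big)$. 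For $x\in[0,S_1]$ the second argument of $r$ is at least $S_2\ge0$, and on this interval $f(0)=0$ and $f'=\partial_1r-\tfrac{\kappa_1}{\kappa_2}\partial_2r>0$; by continuity $f'\ge0$ up to the boundary of $\R_+^2$. We extend $f$ to all of $\R$ monotonically in a $C^1$ way, for instance by $f(x)=f'(0)x$ for $x<0$ and by a $C^1$ increasing continuation for $x>S_1$.

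\emph{Existence, uniqueness and bounds.} The constants $0$ and $S_1$ are a sub- and a supersolution of the scalar problem, so monotone iteration (or Schauder's theorem) gives a solution with $0\le u_1\le S_1$. Uniqueness follows from the monotonicity of $f$: the difference $\phi$ of two solutions satisfies $\kappa_1\phi''=h^2c(y)\phi$ with $c\ge0$, $\phi'(0)=0$ and $\phi(1)=0$, and multiplying by $\phi$ and integrating gives $\phi\equiv0$. The bounds follow directly. Since $u_1''\ge0$ and $u_1'(0)=0$, the function $u_1$ is nondecreasing, so $u_1\le u_1(1)=S_1$; a negative minimum is impossible because $f<0$ there would make $u_1$ concave. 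The bounds \eqref{ccc1} for $u_2$ then follow from \eqref{ccc3}. For the $C^1$-dependence on $(h,S_1,S_2)$ we apply the implicit function theorem to $F(u,h,S_1,S_2)=\big(\kappa_1u''-h^2f(u),u'(0),u(1)-S_1\big)$ from $C^2$ into $C\times\R^2$. The linearization $\phi\mapsto\kappa_1\phi''-h^2f'(u_1)\phi$ with homogeneous boundary data is injective by the same energy argument, and it is Fredholm of index $0$, hence invertible. The point $h=0$, and the boundary of $\R_+^3$ generally, are handled by using the $C^1$-extension of $r$ described above.

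\emph{The inequalities \eqref{G} and \eqref{GR}.} Integrating the scalar equation over $[0,1]$ gives $\int_0^1r(u)\,\rd y=\kappa_1u_1'(1)/h^2$. Set $v=\partial_{S_1}u_1$ and $w=\partial_{S_2}u_1$, recalling that $\partial_{S_1}M=\kappa_1$ and $\partial_{S_2}M=\kappa_2$. These satisfy
$$
\kappa_1v''=h^2\big(f'v+\tfrac{\kappa_1}{\kappa_2}\partial_2r\big),\quad v(1)=1,\qquad \kappa_1w''=h^2\big(f'w+\partial_2r\big),\quad w(1)=0,
$$
both with zero Neumann data at $y=0$. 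Then $\psi:=v-w$ satisfies $\kappa_1\psi''-h^2f'\psi=h^2\partial_2r\,(\kappa_1/\kappa_2-1)\ge0$, because $\partial_2r\le0$ and \eqref{kappa} holds, together with $\psi'(0)=0$ and $\psi(1)=1$. By the maximum principle, using the Neumann condition to exclude a maximum at $y=0$ via a Hopf or even-reflection argument, we get $\psi\le1$, hence $\psi'(1)\ge0$. This gives \eqref{G} because the left side minus the right side equals $\kappa_1\psi'(1)/h^2$. For \eqref{GR}, $M$ does not depend on $h$, so $\zeta:=\partial_hu_1$ solves $\kappa_1\zeta''-h^2f'\zeta=2hf(u_1)\ge0$ with $\zeta'(0)=0$ and $\zeta(1)=0$. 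The maximum principle gives $\zeta\le0$, and therefore $\partial_h\int_0^1f(u_1)\,\rd y=\int_0^1f'(u_1)\zeta\,\rd y\le0$.

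I expect the main technical obstacle to be the careful application of the maximum principle with mixed boundary conditions when the zeroth-order coefficient $f'$ is only nonnegative, not positive. A second obstacle is passing the sign information on $\partial_ir$, which is assumed only on the open quadrant, to the boundary values reached by $u$, such as $u_1=0$ or $S_2=0$. Continuity of $r$ in $C^1$ resolves the latter and yields non-strict inequalities only, which is exactly what \eqref{G} and \eqref{GR} claim.
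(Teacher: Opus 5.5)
Your proposal is correct and follows essentially the paper's route. Like the paper, you use the conservation law \eqref{ccc3} to reduce to a scalar equation for $u_1$, differentiate in the parameters to get exactly the equations for $\psi=\partial_{S_1}u_1-\partial_{S_2}u_1$ and $\zeta=\partial_h u_1$, and use the identity $h^2\int_0^1 r(u)\,\rd y=\kappa_1\partial_y u_1(1)$. The differences are minor:
\begin{itemize}
\item The paper obtains $\partial_y\psi(1)\ge 0$ by a convexity/contradiction argument at $y=1$, and $\zeta\le 0$ by testing with $\zeta_+$; you get both from the weak maximum principle after even reflection.
\item The paper cites earlier work for existence, uniqueness, the bounds and the $C^1$-dependence, which you sketch yourself via sub-/supersolutions, an energy argument and the implicit function theorem.
\end{itemize}
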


\begin{proof}
Except for~\eqref{G} and~\eqref{GR},  this can be obtained as in~\cite[Proposition~2.1]{GW_QAM} or~\cite[Proposition~2.1]{NW26}, and we thus omit details. 

As for~\eqref{G} we infer from~\eqref{U} and~\eqref{ccc3} that
$w:=\partial_{S_1}u_1-\partial_{S_2}u_1$ satisfies
\begin{subequations}\label{p1}
\begin{align}
\kappa_1 \partial_{y}^2w&= h^2\big(p(y)w+q(y)\big)\,, \quad 0<y<1\,,\\
\partial_{y}w(0)&=0\,,\quad w(1)=1\,,  
\end{align}
\end{subequations}
with
$$
p(y):=\partial_1 r(u(y))-\frac{\kappa_1}{\kappa_2}\partial_2 r(u(y))> 0\,,\qquad q(y):=\left(\frac{\kappa_1}{\kappa_2}-1\right)\partial_2 r(u(y))\ge 0
$$
for $y\in [0,1]$ due to~\eqref{r2} and \eqref{kappa}. Hence $\partial_{y}^2w(1)>0$ and it then readily follows from~\eqref{p1} by a contradiction argument that
 $\partial_y w(1)\ge 0$. 
This implies that
$$
h^2\frac{\partial}{\partial {S_1}}\int_0^1 r(u(y)) \rd y- h^2\frac{\partial}{\partial {S_2}}\int_0^1 r(u(y)) \rd y =\kappa_1\partial_y w(1)\ge 0
$$
and \eqref{G} follows.

As for~\eqref{GR} we set $r_j(y):=\partial_j r(u(y))$ for $y\in [0,1]$ and $j=1,2$. From ~\eqref{U} we deduce that
\begin{align*}
\kappa_1 \partial_y^2\big(\partial_h u_1\big)(y)&= 2hr(u(y))+h^2\big(r_1(y) \partial_h u_1(y)+r_2(y) \partial_h u_2(y)\big)\,, \quad 0<y<1\,, \\
\kappa_2 \partial_y^2\big(\partial_h u_2\big)(y)&= -2h r(u(y))-h^2\big(r_1(y) \partial_h u_1(y)+r_2(y) \partial_h u_2(y)\big)\,, \quad 0<y<1\,, 
\end{align*}
subject to
$$
\partial_y\partial_h u_i(0)=0\,,\qquad \partial_h u_i(1)=0
$$ 
for $i=1,2$ while~\eqref{ccc3} implies
\begin{align}\label{vv}
\kappa_1\partial_h u_1(y)+\kappa_2\partial_h u_2(y)
=0 \, , \quad y \in [0,1]\,. 
\end{align}
Consequently, $v:=\partial_h u_1\in {\rm C}^2([0,1])$ satisfies
\begin{align*}
\kappa_1 \partial_y^2v(y)&=h^2\Big(r_1(y)-\tfrac{\kappa_1}{\kappa_2}r_2(y)\Big) v(y)
+2h r(u(y))\,,\qquad y\in(0,1)\,,\\
\partial_{y}v(0)&=0\,,\quad v(1)=0\,,
\end{align*}
with 
\begin{equation}\label{r12x}
r_1(y)-\tfrac{\kappa_1}{\kappa_2}r_2(y)>0\,,\quad y\in (0,1)\,,
\end{equation} 
owing to~\eqref{r2}. Testing with $v_+=\max\{v,0\}$  and using
 $v_+(1)=0$ yields
\begin{align*}
0&\le \frac{h^2}{\kappa_1}\int_0^1\chi_{[v>0]}v^2(y) \Big(r_1(y)-\tfrac{\kappa_1}{\kappa_2}r_2(y)\Big)\,\rd y+\frac{2h}{\kappa_1}\int_0^1 v_+(y) r(u(y))\,\rd y\\
&=-\int_0^1\vert \partial_y v_+(y)\vert^2\,\rd y
\end{align*} 
and thus $v_+\equiv 0$. That is, $v=\partial_h u_1\le 0$ in $[0,1]$. Consequently, we obtain from~\eqref{vv} and~\eqref{r12x} that
\begin{align*}
\frac{\partial}{\partial h}\int_0^1 r\big(u(y)\big)\,\rd y& =\int_0^1\big(r_1(y) \partial_h u_1(y)+r_2(y) \partial_h u_2(y)\big)\,\rd y\\
&=\int_0^1\Big(r_1(y)-\tfrac{\kappa_1}{\kappa_2}r_2(y)\Big)\partial_h u_1(y) \rd y\le 0
\end{align*} 
hence~\eqref{GR}.
\end{proof}

Clearly, uniqueness implies that 
\begin{align}\label{I2}
u[0,S_1,S_2]= (S_1,S_2)\,,\qquad u[h,0,S_2]=(0,S_2)
\,.
\end{align}

Based on the previous proposition we obtain the global well-posedness of \eqref{U}-\eqref{UU}:

\begin{thm}\label{T1}
Assume \eqref{GAshooting}. Then, given $(h^0, S_1^0,S_2^0, Q^0)\in \R_+^4$, there is a
unique global solution 
$$
(h,S_1,S_2,Q,u)\in {\rm C}^1\big([0,\infty),\R_+^4\times {\rm C}^2([0,1],\R^2)\big)
$$
to \eqref{U}-\eqref{UU}.
 Moreover, $(h,S_1,S_2,Q)$ is bounded in $\R_+^4$ and it holds that
\begin{subequations}\label{I1}
\begin{align}
&0\le u_1(t,y)\le S_1(t)\,,\quad 0\le S_2(t)\le u_2(t,y)\le \frac{\kappa_1}{\kappa_2}S_1(t)+S_2(t)\,,\qquad y\in [0,1]\,,\label{ccc1x}\\
& \kappa_1 u_1(t,y)+ \kappa_2 u_2(t,y)= \kappa_1 S_1(t)+\kappa_2 S_2(t)\,,\quad  y\in [0,1]\,, \label{ccc3x}
\end{align}
and
\begin{align}\label{29c}
S_1(t)+S_2(t)=e^{-Dt}(S_1^0+S_2^0)+S^*\big(1-e^{-Dt}\big)
\end{align}
\end{subequations}
for $t\ge 0$.  In fact, the global semiflow generated by~\eqref{UU} in $\R_+^4$ admits a compact global attractor~$\mathcal{A}$ consisting of complete orbits bounded on $\R$.
\end{thm}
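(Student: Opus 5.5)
We plan to reduce \eqref{U}--\eqref{UU} to an ODE system in $\R_+^4$ by using Proposition~\ref{P1}. Define
$$
R(h,S_1,S_2):=\int_0^1 r\big(u[h,S_1,S_2](y)\big)\,\rd y\,.
$$
By Proposition~\ref{P1}, $(h,S_1,S_2)\mapsto u[h,S_1,S_2]$ is ${\rm C}^1$ into ${\rm C}^2([0,1],\R^2)$ and $r\in{\rm C}^1$, so $R\in {\rm C}^1(\R_+^3)$. Hence the right-hand side $F$ of \eqref{UU}, with $\int_0^1 r(u)\,\rd y$ replaced by $R$, is locally Lipschitz on $\R_+^4$. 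Extending $F$ in a ${\rm C}^1$ (or locally Lipschitz) way to $\R^4$ and invoking Picard--Lindel\"of gives a unique maximal solution. Setting $u(t):=u[h(t),S_1(t),S_2(t)]$ then yields the ${\rm C}^1$ regularity of $t\mapsto u(t)$ in ${\rm C}^2$. The pointwise bounds \eqref{ccc1x}--\eqref{ccc3x} follow directly from \eqref{ccc} once we know that $(h,S_1,S_2)$ stays in $\R_+^3$.

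Next, we establish positive invariance of $\R_+^4$ by checking the quasi-positivity condition on each face.
\begin{itemize}
\item If $h=0$, then $h'=\tfrac{\alpha}{\beta}Q\ge0$.
\item If $S_1=0$, then $r(0,S_2)=0$ by \eqref{r2}, and $u[h,0,S_2]=(0,S_2)$ by \eqref{I2}, so $R=0$ and $S_1'=DS^*>0$.
\item If $S_2=0$, then $S_2'=rQ+\beta hR\ge0$ since $r\ge0$.
\item If $Q=0$, then $Q'=\beta d(h)h\ge0$.
\end{itemize}
The standard invariance criterion for ODEs with quasi-positive vector fields then keeps solutions in $\R_+^4$. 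Some care is needed at the boundary: one applies the criterion to the extended field, or argues via a positive-part Gronwall estimate.

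For \eqref{29c}, we add \eqref{C4ax} and \eqref{C4bx}. The reaction terms cancel, leaving $(S_1+S_2)'=D(S^*-S_1-S_2)$, which integrates explicitly. Together with nonnegativity, this bounds $S_1$ and $S_2$ uniformly by $\max\{S_1^0+S_2^0,S^*\}$.

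The main obstacle is bounding $h$ and $Q$, since the $h$-equation is linear in $h$ with a possibly positive growth rate, and the two equations are coupled through the exchange terms $\tfrac{\alpha}{\beta}Q$ and $\beta d(h)h$. We plan to use a weighted combination in which the exchange terms cancel and the growth terms are absorbed by substrate consumption. Take
$$
V:=\beta h+Q+k_1(S_1+S_2)\,,
$$
or more precisely $V:=\beta h+Q+k_1S_1$. A computation gives
$$
(\beta h+Q)'=k_1\big(\beta hR+rQ\big)-b\beta h-(k_2+\alpha)Q+\alpha Q\,.
$$
The $d$-terms cancel, and the growth terms $k_1(\beta hR+rQ)$ are exactly $-k_1$ times the consumption terms in $S_1'$. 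Hence
$$
V'=k_1D(S^*-S_1)-b\beta h-k_2Q\le k_1DS^*-\min\{b,k_2\}\,(\beta h+Q)\,.
$$
Since $S_1$ is already bounded, this implies $V'\le C-\mu V$ with $\mu:=\min\{b,k_2,D\}$ and a suitable constant $C$ (adding $k_1S_1$ back in costs only a bounded term). We conclude $\limsup_{t\to\infty}V\le C/\mu$ and $V$ is bounded for all $t$. This rules out blow-up, so the solution is global and bounded.

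Finally, the absorbing-set estimate shows that the semiflow is point dissipative, with a bounded absorbing set $\{V\le C/\mu+1,\ S_1+S_2\le S^*+1\}$ that attracts bounded sets uniformly. Because the state space is finite-dimensional, the semiflow is compact. The standard theory (Hale) then yields a compact global attractor $\mathcal A$ consisting of complete orbits bounded on $\R$.
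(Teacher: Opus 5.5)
Your proposal is correct and follows essentially the same route as the paper: both write \eqref{U}--\eqref{UU} as a quasi-positive ${\rm C}^1$ Cauchy problem via Proposition~\ref{P1}, take a linear combination in which the exchange and growth/consumption terms cancel ($\beta h+Q+k_1S_1$ in your case, $W=\beta h+(k_1+1)S_1+S_2+Q$ in the paper, the extra $S_1+S_2$ replacing your separate use of \eqref{29c}), obtain a dissipative inequality with rate $\min\{b,k_2,D\}$, and conclude via Hale's theorem for point dissipative, asymptotically smooth semiflows. One small tightening: since $D\ge\mu:=\min\{b,k_2,D\}$, your identity gives directly $V'\le k_1DS^*-\mu V$, so the constant in your absorbing set does not depend on the initial value, which is what point dissipativity requires.
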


\begin{proof} 
We use Proposition~\ref{P1} in order to write~\eqref{U}-\eqref{UU} as a Cauchy problem in $\R_+^4$ of the form 
\begin{equation}\label{CP}
X'=f(X)\,,\quad t \ge 0\,,\qquad X(0)=X^0\,,
\end{equation} 
for $X=(h,S_1,S_2,Q)$ with $f\in{\rm C}^1(\R_+^4)$ and take into account that $f_i(X)\ge 0$ for $X\in \R_+^4$ with $X_i=0$ according to~\eqref{r2} and \eqref{I2}. Thus, for each $X^0=(h^0, S_1^0,S_2^0, Q^0)\in \R_+^4$ there exists a unique maximal nonnegative solution \mbox{$X=(h,S_1,S_2,Q)$} to~\eqref{CP} (and thus to~\eqref{U}-\eqref{UU}) satisfying~\eqref{I1} on the maximal interval of existence $[0,T_m)$. In fact, since
\begin{align*}
\frac{\rd}{\rd t}\big(\beta h+\mu S_1+S_2+Q\big)(t)&= D\big[\mu S^*-\mu S_1(t)-S_2(t)\big]\\
&\quad  +
\left[\big(k_1 +1-\mu\big) r\big(S_1(t),S_2(t)\big)-k_2\right]Q(t)\\
&\quad 
+\left[(k_1+1-\mu)\int_0^1 r\bigl(u(t,y)\bigr)\,\rd y-b\right]\beta h(t)
\end{align*}
for $t\in [0,T_m)$ and any $\mu\ge 0$, we deduce for $W:=\beta h+(k_1+1)S_1+S_2+Q$ that 
\begin{align}\label{W}
W'(t)\le D(k_1+1)S^*- qW(t)\,,\quad t\in [0,T_m)\,,
\end{align}
with $q:=\min\{D,k_2,b\}>0$. Consequently, $W$ and (thus the solution $(h,S_1,S_2,Q)$) is bounded, hence $T_m=\infty$. In fact,
\begin{equation}\label{Wbound}
\limsup_{t\to\infty}W(t)\le \frac{D(k_1+1)S^*}{q}
\end{equation}
so that  
every orbit eventually enters and remains in a bounded sublevel set of~$W$, i.e. the semiflow generated by~\eqref{UU} in $\R_+^4$ is point dissipative and asymptotically smooth. Therefore, it admits a compact global attractor $\mathcal{A}$ by~\cite[Theorem~3.4.6]{Hale1988} consisting of complete orbits bounded on $\R$.

\end{proof}

In particular, Theorem~\ref{T1} implies that all solution orbits are relatively compact, and therefore their $\omega$-limit sets are nonempty. The long-term dynamics are investigated in the sequel.

\section{The Washout Equilibrium}\label{Sec3}

Clearly,~\eqref{CP} admits the washout equilibrium
$$
h=0\,,\quad S_1=S^*\,,\quad S_2=0\,,\quad  Q=0\,.
$$
We now linearize~\eqref{CP} at this trivial steady state.
Recalling that $u[0,S^*,0]\equiv (S^*,0)$, the Jacobian of $f$ at this equilibrium is
\begin{equation}\label{Jac}
\partial f(0,S^*,0,0)=\left(\begin{matrix}
k_1 r(S^*,0)-b-d(0) & 0 & 0 & \alpha/\beta\\[4pt]
-\beta r(S^*,0) & -D & 0& -r(S^*,0)\\[4pt]
\beta r(S^*,0) & 0 & -D& r(S^*,0)\\[4pt]
\beta d(0) & 0 & 0& k_1r(S^*,0)-k_2-\alpha
\end{matrix}\right)\,.
\end{equation}
The eigenvalues of this matrix can be computed explicitly, yielding a double eigenvalue
$$
\lambda_0:=-D<0
$$
and
\begin{equation}\label{lambda}
\lambda_\pm:=\frac{1}{2}\left(p+q \pm\sqrt{ (p-q)^2+4\alpha d(0)}\right)
\end{equation}
with 
$$
p:=k_1r(S^*,0)-b-d(0) \,,\qquad q:=k_1r(S^*,0)-k_2-\alpha\,.
$$
Since $\lambda_\pm\in\R$ with $\lambda_+\geq \lambda_-$, the local stability of the equilibrium $(0,S^*,0,0)$ is determined solely by the sign of $\lambda_+$.  
In fact, we have $\lambda_+<0$
and thus local stability provided that
\begin{equation}\label{stab}
-pq+\alpha d(0)< 0 \quad \text{ and }\quad p+q<0\,,
\end{equation}
while
$\lambda_+ >0$
and thus local instability provided that
\begin{equation}\label{instab}
-pq+\alpha d(0)>0 \quad \text{ or }\quad p+q>0\,.
\end{equation}
Consequently, as in~\cite{GHE21}, we obtain the following local and global stability result for the washout equilibrium:

\begin{thm}\label{P2}
Assume~\eqref{GAshooting}. Then, the  washout equilibrium $(h,S_1,S_2,Q)=(0,S^*,0,0)$ is unstable provided that
$$
d(0)\big( k_1r(S^*,0)-k_2\big) > \big(k_1r(S^*,0)-b\big)\big(k_1r(S^*,0)-k_2-\alpha\big)
$$
or
$$ 
2 k_1r(S^*,0)-k_2-\alpha > d(0)+b \,,
$$
while it is locally asymptotically stable  provided that both conditions
\begin{align}\label{S1}
d(0)\big( k_1r(S^*,0)-k_2\big)<\big(k_1r(S^*,0)-b\big)\big(k_1r(S^*,0)-k_2-\alpha\big) 
\end{align}
and
\begin{align}\label{S2}
2 k_1r(S^*,0)-k_2-\alpha<d(0)+b  
\end{align}
hold. In fact,  it is globally asymptotically stable  provided that~\eqref{S1} holds and~\eqref{S2} is strengthened to $k_1r(S^*,0)<k_2$.
\end{thm}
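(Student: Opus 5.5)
The local statements reduce to the linearization. By Theorem~\ref{T1} and Proposition~\ref{P1}, \eqref{UU} is the Cauchy problem \eqref{CP} with $f\in{\rm C}^1(\R_+^4)$, so the principle of linearized stability applies at $(0,S^*,0,0)$ with Jacobian \eqref{Jac}.

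\textbf{Step 1: the spectrum.} The columns for $S_1,S_2$ contain only the entry $-D$ on the diagonal. Hence the characteristic polynomial factors as $(\lambda+D)^2$ times that of the $2\times2$ block
$$\begin{pmatrix} p & \alpha/\beta\\ \beta d(0) & q\end{pmatrix},$$
which gives $\lambda_0=-D$ and $\lambda_\pm$ as in \eqref{lambda}. The discriminant is nonnegative, so $\lambda_+<0$ if and only if the trace $p+q<0$ and the determinant $pq-\alpha d(0)>0$.

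Next I translate these conditions. Write $\rho:=k_1r(S^*,0)$. Then
$$pq-\alpha d(0)=(\rho-b)(\rho-k_2-\alpha)-d(0)(\rho-k_2-\alpha)-\alpha d(0)=(\rho-b)(\rho-k_2-\alpha)-d(0)(\rho-k_2),$$
and $p+q=2\rho-k_2-\alpha-b-d(0)$. So \eqref{S1}--\eqref{S2} are exactly \eqref{stab}, and the two instability conditions are exactly \eqref{instab}. If one of them holds, then either the trace is positive or the determinant is negative, and in both cases $\lambda_+>0$. Linearized stability and instability then give the local assertions.

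\textbf{Step 2: global stability.} Assume \eqref{S1} and $\rho<k_2$. From \eqref{29c}, $S_1+S_2\to S^*$. From \eqref{ccc1x} and monotonicity \eqref{r2}, $r(S_1,S_2)\le r(S_1,0)$ and $r(u_1,u_2)\le r(u_1,0)\le r(S_1,0)$. Moreover $S_1(t)\le S_1^0e^{-Dt}+S^*(1-e^{-Dt})+o(1)$, so $\limsup S_1\le S^*$. Hence for every $\varepsilon>0$ and large $t$ the pair $(h,Q)$ is a nonnegative subsolution of the cooperative linear system
$$h'\le(\rho_\varepsilon-b-d(h))h+\tfrac{\alpha}{\beta}Q,\qquad Q'\le(\rho_\varepsilon-k_2-\alpha)Q+\beta d(h)h,$$
with $\rho_\varepsilon=k_1r(S^*+\varepsilon,0)$.

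The $d(h)$ terms are a nuisance, because $d$ is nondecreasing and appears with opposite signs. To handle them I use the weighted sum $V:=\beta h+Q$, whose derivative is
$$V'\le\beta(\rho_\varepsilon-b)h+(\rho_\varepsilon-k_2)Q,$$
since the detachment and attachment terms cancel. If also $\rho<b$, this decays immediately.

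In general I would instead look for a positive left eigenvector $w=(w_1,w_2)$ of the matrix $M_\varepsilon$ with entries $p_\varepsilon=\rho_\varepsilon-b-d(0)$, $\alpha/\beta$, $\beta d(0)$, $q_\varepsilon$. \eqref{S1} together with $\rho<k_2$ forces the trace to be negative, since $q<0$ and $pq>\alpha d(0)\ge0$ give $p<0$. So this matrix is Hurwitz and Metzler, and there is $w>0$ with $wM_\varepsilon<0$ componentwise for small $\varepsilon$.

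\textbf{Main obstacle.} The terms involving $d(h)-d(0)\ge0$ contribute $(d(h)-d(0))h\,(\beta w_2-w_1)$ to $(w_1h+w_2Q)'$. This is harmless precisely when $w_1\ge\beta w_2$, and I expect this to be where $\rho<k_2$ enters. Indeed $\beta\cdot$(column of $Q$-equation) sums show that $w=(\beta,1)$ gives second component $\rho-k_2<0$. I would therefore choose $w=(\beta(1+\delta),1)$ and verify both components are negative using $p<0$ and \eqref{S1}.

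This yields $w_1h+w_2Q\to0$ exponentially, so $h,Q\to0$. Then \eqref{C4b} gives $S_2\to0$, and $S_1\to S^*$ by \eqref{29c}. Combined with local stability, this gives global asymptotic stability.
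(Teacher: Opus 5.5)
Your proposal is correct and follows the paper's proof: the local part is the same block-triangular reduction of \eqref{Jac} to the $2\times2$ Metzler block, and the global part uses exactly the paper's Lyapunov functional $\eta h+Q$, with $\eta=\beta(1+\delta)$ chosen in the interval $\max\{\beta,\beta d(0)/(-p_\varepsilon)\}<\eta<\beta(-q_\varepsilon)/\alpha$, which is nonempty precisely because of \eqref{S1} and $\rho<k_2$. One thing to state explicitly: when $\rho_\varepsilon>b$ (the case not covered by $\beta h+Q$), this interval forces $\delta$ to stay bounded away from $0$, so $\delta$ must be taken from that range rather than chosen small.
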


\begin{proof}
In view of the previous discussion, it remains only to prove the global stability assuming~\eqref{S1} and $k_1r(S^*,0)<k_2$. To this end, let~$(h^0, S_1^0,S_2^0, Q^0)\in [0,\infty)^4$ be an arbitrary initial value and choose $\delta>0$ small enough such that  
\begin{align}\label{S1cx}
d(0)\big( k_1r(S^*+\delta,0)-k_2\big)<\big(k_1r(S^*+\delta,0)-b\big)\big(k_1r(S^*+\delta,0)-k_2-\alpha\big) 
\end{align}
and $k_1r(S^*+\delta,0)<k_2$.
Now, since~\eqref{S1cx} is equivalent to
$$
0\le d(0)\alpha< \big(d(0)-k_1r(S^*+\delta,0)+b\big)\big(\alpha+k_2-k_1r(S^*+\delta,0)\big)\,,
$$
it follows from $k_1r(S^*+\delta,0)<k_2$ that we may choose  $\eta>0$ satisfying
$$
\max\left\{\beta,\frac{\beta d(0)}{d(0)-k_1r(S^*+\delta,0)+b}\right\}<\eta<
\frac{\beta \big(\alpha+k_2-k_1r(S^*+\delta,0)\big)}{\alpha}\,.
$$
Next, \eqref{29c} entails that $S_1'(t)\le D(S^*-S_1(t))$ for $t\ge 0$ so that there exists $T_\delta>0$ such that $S_1(t)\le S^*+\delta$ for $t\ge T_\delta$.  Recalling~\eqref{ccc1x} and the monotonicity properties of $r$ from~\eqref{r2}, we deduce that
$$
r\big(u_1(t,y),u_2(t,y)\big)\le r\big(S_1(t),S_2(t)\big)\le r(S^*+\delta,0)\,,\quad y\in [0,1]\,,\quad t\ge T_\delta\,.
$$
Therefore, for $t\ge T_\delta$ we have that
\begin{align*}
\frac{\rd}{\rd t}\big(\eta h+Q\big)(t)&= \eta k_1h(t)\int_0^{1}r\big(u_1(t,y),u_2(t,y)\big)\, \rd y-\eta bh(t)\\
&\quad +\big(\beta-\eta\big) d\big(h(t)\big)h(t)
+\left(\frac{\eta \alpha}{\beta}+k_1r\big(S_1(t),S_2(t)\big)-k_2-\alpha\right)Q(t)\\
&\le -\left[ -k_1r(S^*+\delta,0)+b+\left(1-\frac{\beta}{\eta}\right)d(0)\right] \eta h(t)\\
&\quad -\left[-k_1 r(S^*+\delta,0)+k_2+\alpha-\frac{\alpha\eta}{\beta}\right] Q(t)\\
&\le -m\big(\eta h+Q\big)(t)
\end{align*}
with
$$
m:=\min\left\{ -k_1r(S^*+\delta,0)+b+\left(1-\frac{\beta}{\eta}\right) d(0)\,,\, -k_1 r(S^*+\delta,0)+k_2+\alpha-\frac{\alpha\eta}{\beta}\right\}>0\,.
$$
Consequently,
$$
\lim_{t\to\infty} \big(\eta h+Q\big)(t)=0\,.
$$
It then readily follows from this and~\eqref{C4bx} and~\eqref{29c} that
$$
\lim_{t\to\infty} \big(S_1(t),S_2(t)\big)=(S^*,0)\,.
$$
That is, the trivial equilibrium is globally stable.
\end{proof}

Note that the proof of Theorem~\ref{P2} yields the explicit strict Lyapunov functional $\eta h+Q$ in case that~\eqref{S1}  and  $k_1r(S^*,0)<k_2$ hold.

\section{The Persistence Equilibrium}\label{SS4.2} 

In Theorem~\ref{P2}, we provided conditions for the (global) stability of the trivial washout equilibrium. We now establish the existence and uniqueness of a nontrivial steady state in the complementary case where these conditions fail, thereby confirming the conjecture made in~\cite{GHE21}. Since an explicit representation is unlikely to be available, we instead employ a shooting argument, which simultaneously yields existence and uniqueness.\\

A stationary solution to~\eqref{C} necessarily satisfies 
\begin{subequations}\label{Stat}
\begin{align}
S_1+S_2=S^*\,,\qquad \kappa_1 c_1(z)+\kappa_2 c_2(z)=(\kappa_1-\kappa_2)S_1 +\kappa_2 S^*\,,\quad z\in [0,h]\,,
\end{align}
so that $S_2$ and $c_2$ are uniquely determined by $S_1$ and $c_1$. Consequently, writing 
\begin{align}
S:=S_1\,,\qquad c:=c_1\,,\qquad {\rm r}(S):=r(S,S^*-S)
\end{align} 
\end{subequations}
for notational simplicity, the stationary problem for~\eqref{C} is equivalent to finding numbers $h,S,Q>0$ and a function $c=c(z)$, $z\in [0,h]$, such that
\begin{subequations}\label{A}
\begin{align}
&\kappa_1\partial_z^2 c(z)\stackrel{!}{=}r\left(c(z),\kappa_2^{-1}\big((\kappa_1-\kappa_2)S+\kappa_2 S^*-\kappa_1 c(z)\big)\right)\,, \quad 0<z<h\,, \label{A1}\\
&\partial_z c(0)\stackrel{!}{=}0\,,\quad
 c(h)\stackrel{!}{=}S\,,\quad
k_1\kappa_1 \partial_z c(h)\stackrel{!}{=}d(h)h+bh-\frac{\alpha}{\beta}Q\,, \label{A4}
\end{align}
where $Q$ and $S$ are determined by the conditions
\begin{equation}\label{A5x}
Q\stackrel{!}{=}\frac{\beta d(h)h}{\alpha+k_2-k_1{\rm r}(S)}
\end{equation}
and
\begin{equation*}
DS+{\rm r}(S)Q+\beta\kappa_1 \partial_zc(h)\stackrel{!}{=}D S^*\,,
\end{equation*}
respectively. Using~\eqref{A4} and~\eqref{A5x} to express $\partial_zc(h)$, the latter equation equivalently reads as
\begin{equation}\label{A6}
k_1DS+\frac{\beta k_2}{\alpha+k_2-k_1{\rm r}(S)} d(h)h+b\beta h\stackrel{!}{=}k_1D S^*\,.
\end{equation}
\end{subequations}
Clearly,~\eqref{A5x} enforces that 
\begin{equation}\label{Delta}
\Delta(S):=\alpha+k_2-k_1{\rm r}(S)>0\,.
\end{equation}
In fact, we shall impose that
\begin{subequations}\label{ZZ}
\begin{align}\label{Z1x}
&k_2\ge k_1 r(S^*,0) 
\end{align}
and the uniform instability condition
\begin{align}\label{h2eff}
\Big(k_1\,r\big(S^*,\kappa_2^{-1}(\kappa_2-\kappa_1)S^*\big)-b\Big)\big(\alpha+k_2-k_1 r(S^*,0)\big)>\big(k_2-k_1 r(S^*,0)\big)d(0)\,.
\end{align}
\end{subequations}
 Since ${\rm r}$ is non-decreasing according to~\eqref{r2} so that ${\rm r}(S)\le{\rm r}(S^*)=r(S^*,0)$ for $S\in[0,S^*]$, condition~\eqref{Z1x} ensures that
$$
\Delta(S)\ge\Delta(S^*)\ge\alpha>0\,,\quad S\in[0,S^*]\,.
$$
Moreover, since $ r\big(S^*,\kappa_2^{-1}(\kappa_2-\kappa_1)S^*\big)\le r(S^*,0)$ due to~\eqref{r2} and \eqref{kappa}, condition~\eqref{h2eff} implies that
\begin{equation*}
\big(k_1 r(S^*,0)-b\big)\big(\alpha+k_2-k_1 r(S^*,0)\big)>\big(k_2-k_1 r(S^*,0)\big)d(0)\,,
\end{equation*}
which, in turn, implies the instability of the trivial steady state by Theorem~\ref{P2}. Under these assumptions, we prove the existence of a unique nontrivial equilibrium:

\begin{thm}\label{T5}
Assume~\eqref{GAshooting} and ~\eqref{ZZ}. Then, there exists a unique persistence equilibrium
\begin{align*}
&\big(h_\star,S_{\star,1},S_{\star,2},Q_\star\big) \in (0,\infty)\times (0,S^*)\times (0,S^*)\times (0,\infty)\,,\\
& \big(c_{\star,1},c_{\star,2}\big)\in {\rm C}^2([0,h_\star],\R_+^2)
\end{align*}
of~\eqref{C}. It is defined by the unique solution
$(h_\star,S_{\star,1},Q_\star, c_{\star,1})$
to~\eqref{A} and by~\eqref{Stat}.
\end{thm}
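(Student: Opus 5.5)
We reduce \eqref{A} to a scalar equation in $h$ by a shooting argument. The problem is posed on $[0,h]$ with a third boundary condition, so we work in the rescaled form of Proposition~\ref{P1}.

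\textbf{Step 1: $S$ as a function of $h$.} Given $h\ge 0$, we first solve \eqref{A6} for $S$. Its left-hand side,
\[
F(S,h):=k_1DS+\frac{\beta k_2\, d(h)h}{\Delta(S)}+b\beta h,
\]
is strictly increasing in $S$ on $[0,S^*]$. This holds because ${\rm r}$ is non-decreasing, so $\Delta$ is non-increasing, and $\Delta\ge\alpha>0$ by \eqref{Z1x}. Moreover $F(\cdot,h)$ is strictly increasing in $h$, since $d(h)h$ is increasing by \eqref{d}. We have $F(S^*,0)=k_1DS^*$, and $F(0,h)\le k_1DS^*$ exactly when $h\le \bar h$ for a unique $\bar h>0$.

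Hence for each $h\in[0,\bar h]$ there is a unique $S=\sigma(h)\in[0,S^*]$ with $F(\sigma(h),h)=k_1DS^*$. The function $\sigma$ is ${\rm C}^1$ and strictly decreasing, with $\sigma(0)=S^*$ and $\sigma(\bar h)=0$. We then set
\[
Q(h):=\frac{\beta d(h)h}{\Delta(\sigma(h))}
\]
according to \eqref{A5x}.

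\textbf{Step 2: the scalar equation.} For $h\in(0,\bar h]$, let $c=u_1[h,\sigma(h),S^*-\sigma(h)](\cdot/h)$ be given by Proposition~\ref{P1}. By \eqref{Stat}, $c$ solves \eqref{A1} together with the first two conditions in \eqref{A4}. Integrating \eqref{A1} gives $\kappa_1\partial_zc(h)=h\,G(h)$, where
\[
G(h):=\int_0^1 r\big(u[h,\sigma(h),S^*-\sigma(h)](y)\big)\,\rd y .
\]
Using \eqref{A5x}, the remaining condition in \eqref{A4} becomes, after division by $h>0$,
\[
\Phi(h):=k_1G(h)-b-d(h)+\frac{\alpha d(h)}{\Delta(\sigma(h))}\stackrel{!}{=}0 .
\]
Equivalently, $\Phi(h)=k_1G(h)-b-d(h)\,\dfrac{k_2-k_1{\rm r}(\sigma(h))}{\Delta(\sigma(h))}$. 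Thus nontrivial equilibria correspond exactly to zeros of $\Phi$ in $(0,\bar h)$. At $h=\bar h$ we get $S=0$ and $Q=\beta d(\bar h)\bar h/\Delta(0)>0$, but $G(\bar h)=0$; this endpoint will be excluded by a sign argument.

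\textbf{Step 3: monotonicity of $\Phi$.} This is the main step. The term $-d(h)\dfrac{k_2-k_1{\rm r}(\sigma)}{\Delta(\sigma)}$ is non-increasing in $h$ for the following reasons:
\begin{itemize}
\item $d$ is non-decreasing.
\item The map $x\mapsto (k_2-x)/(\alpha+k_2-x)$ is decreasing in $x$ and nonnegative on the relevant range by \eqref{Z1x}.
\item The composition $x={\rm r}(\sigma(h))$ is non-increasing in $h$.
\end{itemize}
For the term $G$, write
\[
G'(h)=\partial_h\!\int_0^1 r\,\rd y+\sigma'(h)\big(\partial_{S_1}-\partial_{S_2}\big)\!\int_0^1 r\,\rd y .
\]
The first summand is $\le0$ by \eqref{GR}, and the second is $\le0$ by \eqref{G} together with $\sigma'<0$. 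So $\Phi$ is non-increasing. Strictness should follow from strict positivity in the proof of \eqref{G}: the function $w$ there satisfies $w(1)=1$ and $\partial_y^2w(1)>0$, which gives $\partial_y w(1)>0$ when $h>0$. I expect that making this strictness fully rigorous (excluding $\partial_yw(1)=0$) is the main technical obstacle. I would handle it with a Hopf-type or comparison argument on \eqref{p1}: $w$ cannot attain an interior maximum $\ge 1$ because $p>0$, so $w<1$ on $[0,1)$, and then $\partial_yw(1)>0$ follows from Hopf's lemma.

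\textbf{Step 4: signs at the endpoints.} As $h\to0^+$, $u\to(S^*,0)$, so
\[
\Phi(0^+)=k_1r(S^*,0)-b-d(0)\,\frac{k_2-k_1r(S^*,0)}{\alpha+k_2-k_1r(S^*,0)} .
\]
This is positive by the consequence of \eqref{h2eff} noted before the theorem. The stronger form \eqref{h2eff} is presumably needed for later stability; here the weaker consequence suffices.

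At $h=\bar h$ we have $G=0$ and ${\rm r}(0)=0$, so $\Phi(\bar h)=-b-d(\bar h)k_2/(\alpha+k_2)<0$. By continuity and strict monotonicity there is a unique $h_\star\in(0,\bar h)$ with $\Phi(h_\star)=0$. We then take $S_{\star,1}=\sigma(h_\star)\in(0,S^*)$, $S_{\star,2}=S^*-S_{\star,1}\in(0,S^*)$, $Q_\star=Q(h_\star)>0$, and $c_{\star,i}$ from Proposition~\ref{P1} via \eqref{Stat}. Nonnegativity of the $c_{\star,i}$ follows from \eqref{ccc1}.

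For uniqueness, any solution of \eqref{A} with $h,S,Q>0$ must satisfy $S=\sigma(h)$ by Step 1 and $\Phi(h)=0$ by Step 2. Any solution with $S\in(0,S^*)$ also forces $h<\bar h$.
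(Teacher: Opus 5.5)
Your proposal is correct, and it takes a genuinely different route from the paper. The paper does not use the solution map of Proposition~\ref{P1} in this proof. Instead it runs a two-parameter shooting argument:
\begin{enumerate}
\item it shoots from the substratum with $c(0)=\mu$ and a frozen parameter $h_0$ in the nonlinearity (Lemma~\ref{L17});
\item it solves the Dirichlet condition for $h(\mu,h_0)$ (Lemma~\ref{L21});
\item it solves the Neumann (flux) condition for $\mu_*(h_0)$, using the auxiliary function $M$ (Lemmas~\ref{L22} and~\ref{L40});
\item it solves the consistency condition $h(\mu_*(h_0),h_0)=h_0$ via the sign of the Wronskian-type quantity $K$ (Lemmas~\ref{L30} and~\ref{P30}).
\end{enumerate}

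You instead reduce everything to the single scalar function $\Phi$. You get its monotonicity from \eqref{G} and \eqref{GR}, which the paper proves in Proposition~\ref{P1} but only exploits later for stability. This is considerably shorter. It also only needs $\Phi(0^+)>0$, i.e.\ the weaker consequence of \eqref{h2eff} displayed before the theorem; given \eqref{Z1x}, this is equivalent to $\lambda_+>0$ in \eqref{lambda}. The paper needs the uniform form \eqref{h2eff} because in Lemma~\ref{L22} the inequality $M(0,S^*,h_0)<0$ must hold for every frozen $h_0\in[0,h_*]$, and $R(S^*,h_0)$ depends on $S(h_0)$. Because your $\Phi$ is strictly decreasing on $[0,\bar h]$, your argument even gives a sharp dichotomy under \eqref{GAshooting} and \eqref{Z1x}: a persistence equilibrium exists, and is then unique, if and only if $\Phi(0^+)>0$. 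What the shooting approach buys in return is that it works only with the initial value problem and monotonicity in the shooting parameters, in line with the authors' earlier works. It does not rely on derivative inequalities for the boundary value problem.

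The step you flag as the main obstacle actually comes for free, and no Hopf-type argument is needed.
\begin{itemize}
\item By \eqref{r2}, ${\rm r}'(S)=\partial_1 r-\partial_2 r>0$ for $S\in(0,S^*)$, and $\sigma$ is strictly decreasing. Hence $h\mapsto k_1{\rm r}(\sigma(h))$ is strictly decreasing.
\item So $\psi(h)=(k_2-k_1{\rm r}(\sigma(h)))/\Delta(\sigma(h))$ is strictly increasing, and it is nonnegative by \eqref{Z1x}.
\item Since $d$ is non-decreasing with $d>0$ on $(0,\infty)$, the product $d\psi$ is strictly increasing on $(0,\bar h]$.
\end{itemize}
Therefore $\Phi=k_1G-b-d\psi$ is strictly decreasing as soon as $G$ is non-increasing, which \eqref{G} and \eqref{GR} already give. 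Your Hopf-type argument for $\partial_y w(1)>0$ can also be made to work, but it is unnecessary.
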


In order to prove Theorem~\ref{T5}, we first show that, for each $h>0$, there exists a unique solution $S=S(h)$ of~\eqref{A6}. The remaining subproblem~\eqref{A1}--\eqref{A5x} is then treated by a shooting argument analogous to that in~\cite{NW26}. However, an additional difficulty arises: after solving~\eqref{A6} and expressing $S$ as a function of $h$, the right-hand side of the resulting shooting problem exhibits an additional dependence on $h$. As we shall see, this requires a more careful analysis.
\\

We begin by solving~\eqref{A6} for $S$, thereby expressing $S=S(h)$ as a function of $h$.

\begin{lem}\label{L20}
 Assume~\eqref{GAshooting} and \eqref{Z1x}.
There exist a unique $h_*>0$ and a unique, strictly decreasing function $S\in {\rm C}^1([0,h_*], [0,S^*])$ with $S(0)=S^*$ and $S(h_*)=0$ such that 
\begin{equation}\label{A6y}
k_1DS(h)+\frac{\beta k_2}{\Delta(S(h))} d(h)h+b\beta h=k_1D S^*\,,\qquad h\in [0,h_*]\,.
\end{equation}
\end{lem}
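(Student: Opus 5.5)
Rather than solving \eqref{A6y} for $S$ as a function of $h$, we solve it for $h$ as a function of $S$ and invert. Define
$$
F(S,h):=k_1D(S^*-S)-\frac{\beta k_2}{\Delta(S)}\,d(h)h-b\beta h\,,\qquad (S,h)\in[0,S^*]\times\R_+\,,
$$
so that \eqref{A6y} reads $F(S(h),h)=0$. By \eqref{Z1x} we have $\Delta(S)\ge\alpha>0$ on $[0,S^*]$, so $F$ is well defined and of class ${\rm C}^1$, since $r$, $d$ are ${\rm C}^1$.

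\textbf{Monotonicity in $h$.} For fixed $S\in[0,S^*]$, the map $h\mapsto \frac{\beta k_2}{\Delta(S)}d(h)h+b\beta h$ vanishes at $h=0$. Its derivative is
$$
\frac{\beta k_2}{\Delta(S)}\big(d'(h)h+d(h)\big)+b\beta\ \ge\ b\beta>0
$$
by \eqref{d}, so the map is strictly increasing and tends to $\infty$ as $h\to\infty$. The left-hand side $k_1D(S^*-S)$ is nonnegative. Hence for each $S\in[0,S^*]$ there is a unique $H(S)\ge0$ with $F(S,H(S))=0$, and $H(S^*)=0$. Since $\partial_hF\le -b\beta<0$, the implicit function theorem gives $H\in{\rm C}^1([0,S^*])$.

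\textbf{Monotonicity in $S$.} Differentiating $F(S,H(S))=0$ yields
$$
H'(S)\Big(\tfrac{\beta k_2}{\Delta(S)}\big(d'(H)H+d(H)\big)+b\beta\Big)=-k_1D+\frac{\beta k_2\,\Delta'(S)}{\Delta(S)^2}\,d(H)H\,.
$$
Here $\Delta'(S)=-k_1{\rm r}'(S)$ with ${\rm r}'(S)=\partial_1r-\partial_2 r>0$ by \eqref{r2}, so $\Delta'\le0$ and the right-hand side is at most $-k_1D<0$. Thus $H'(S)<0$, and $H$ is strictly decreasing from $H(0)=:h_*$ to $H(S^*)=0$. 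We check $h_*>0$ directly: $F(0,0)=k_1DS^*>0$ and $F(0,\cdot)$ is strictly decreasing, so its zero is positive.

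\textbf{Conclusion.} The function $S:=H^{-1}:[0,h_*]\to[0,S^*]$ is strictly decreasing, satisfies $S(0)=S^*$ and $S(h_*)=0$, and solves \eqref{A6y}. It is ${\rm C}^1$ because $H'\neq0$, including at the endpoints, where one-sided derivatives are used. Uniqueness follows because every solution $(S,h)$ with $S\in[0,S^*]$ lies on the graph of $H$. In particular, $h_*$ is unique, being forced by $S(h_*)=0$, i.e.\ $h_*=H(0)$.

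The main obstacle is the sign bookkeeping in $H'(S)$. The dependence of $\Delta$ on $S$ could a priori spoil monotonicity, and it is condition \eqref{Z1x}, together with the monotonicity of ${\rm r}$, that makes that term harmless.
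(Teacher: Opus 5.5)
Your proof is correct. It is essentially the paper's argument with the roles of the variables swapped, rather than a genuinely new route. Both use the same function $F$ (yours differs from the paper's only in sign and argument order) and the same two sign facts. First, the $h$-derivative is strictly signed because of $b\beta>0$ and \eqref{d}. Second, the $S$-derivative is strictly signed because $\Delta'=-k_1{\rm r}'\le 0$.

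The paper proceeds in three steps:
\begin{enumerate}
\item It fixes $h_*$ as the unique zero of $h\mapsto F(h,0)$.
\item For each $h\in[0,h_*]$ it obtains $S(h)$ by the intermediate value theorem, using $F(h,0)\le 0\le F(h,S^*)$ and $\partial_SF>0$.
\item It reads off $S'<0$ from $\partial_hF>0$.
\end{enumerate}
You instead solve for $h=H(S)$ on all of $[0,S^*]$, get $H'<0$ from the sign of $\partial_SF$, and invert. Your first step needs no preliminary localisation of $h_*$, because the $h$-dependence is coercive.

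Your ordering makes $h_*=H(0)$ and the uniqueness of both $h_*$ and $S$ immediate from the graph characterisation. The cost is an extra, harmless, one-dimensional inversion step. The paper's ordering yields $S$ directly as the implicit function.

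One small precision: \eqref{r2} gives ${\rm r}'>0$ only on $(0,S^*)$, and at the endpoints only ${\rm r}'\ge 0$ by continuity. Since you use only $\Delta'\le 0$, nothing breaks.
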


\begin{proof}
Setting
$$
F(h,S):=k_1D(S-S^*)+\frac{\beta k_2}{\Delta(S)} d(h)h+b\beta h\,,\qquad h\ge 0\,,\quad S\in [0,S^*]\,,
$$
we solve $F(h,S)=0$ for $S=S(h)$.
 Noticing that $\Delta(0)=\alpha+k_2$ since ${\rm r}(0)=0$ according to~\eqref{r2}, we have
$$
F(h,0)=-k_1D S^*+\frac{\beta k_2}{\alpha+k_2} d(h)h+b\beta h\,,\quad h>0\,.
$$
Therefore, since $d$ is non-decreasing, there exists a unique $h_*>0$ such that 
\begin{align}\label{c12}
F(h,0)<0\,,\quad 0\le h< h_*\,,\qquad F(h_*,0)=0\,.
\end{align} 
Moreover, 
\begin{align}\label{c13}
F(h,S^*)= \frac{\beta k_2}{\Delta(S^*)} d(h)h+b\beta h>0\,,\quad 0< h\le h_*\,.
\end{align}
Next, observe that
\begin{align}\label{c14}
\partial_SF(h,S)=k_1D+ \frac{\beta k_2}{\Delta(S)^2} k_1{\rm r}'(S)d(h)h>0\,,\qquad h\ge 0\,,\quad S\in [0,S^*]\,,
\end{align}
since ${\rm r}'(S)\ge 0$ due to~\eqref{r2}. Therefore, \eqref{c12}-\eqref{c14} ensure that for each $h\in [0,h_*]$ there is a unique $S(h)\in [0,S^*]$ such that $F(h,S(h))=0$. 
Clearly, we have $S(0)=S^*$ and $S(h_*)=0$. Moreover, the Implicit Function Theorem and \eqref{c14} imply that $S\in C^1([0,h_*])$. Finally, noticing from~\eqref{d} that
\begin{align*}
\partial_h F(h,S)&= \frac{\beta k_2}{\Delta(S)} \big(d'(h)h+d(h)\big)+b\beta >0\,,\qquad h\in [0,h_*]\,,\quad S\in [0,S^*]\,,
\end{align*}
we may differentiate $F(h,S(h))=0$ with respect to $h\in [0,h_*]$ and use~\eqref{c14} to get
$$
S'(h)=-\frac{\partial_h F(h,S(h))}{\partial_SF(h,S(h))}<0\,,\quad h\in [0,h_*]\,.
$$
This proves Lemma~\ref{L20}.
\end{proof}

Since $S=S(h)$ and  $Q=Q(h)$ are now expressed in terms of $h$, we are left with determining  $c=c(z)$ and $h\in (0,h_*]$ uniquely such that
\begin{subequations}\label{AA}
\begin{align}
&\kappa_1\partial_z^2 c(z)\stackrel{!}{=}r\left(c(z)\,,\,\kappa_2^{-1}\big((\kappa_1-\kappa_2)S(h)+\kappa_2 S^*-\kappa_1 c(z)\big)\right)\,, \quad 0<z<h\,, \label{A1q}\\
&\partial_z c(0)\stackrel{!}{=}0\,,\quad
 c(h)\stackrel{!}{=}S(h)\,,\quad
k_1\kappa_1 \partial_z c(h)\stackrel{!}{=}bh+\frac{k_2-k_1 {\rm r}(S(h))}{\Delta(S(h))}d(h)h\,. \label{AA4}
\end{align}
\end{subequations}

To this end, we use a shooting argument based on the following auxiliary lemma:

\begin{lem}\label{L17} 
 Assume~\eqref{GAshooting} and \eqref{Z1x}.
Set
$$
R({\rm c},h_0):=r\left({\rm c},\kappa_2^{-1}\big((\kappa_1-\kappa_2)S(h_0)+\kappa_2 S^*-\kappa_1 {\rm c}\big)\right)\,,\quad {\rm c}\ge 0\,,\quad h_0\in [0,h_*]\,.
$$
Then, for each $(\mu,h_0)\in [0,S^*]\times [0,h_*]$, there exists a unique non-negative function $${\rm c}(\cdot,\mu,h_0)\in {\rm C}^2([0,\infty))$$ satisfying
\begin{subequations}\label{shooting}
\begin{align}
&\kappa_1\partial_z^2 {\rm c}(z)=R({\rm c}(z),h_0)\,, \quad z>0\,, \label{A1ss}\\
&{\rm c}(0)=\mu\,,\quad \partial_z {\rm c}(0)=0\,. \label{A4ss}
\end{align}
\end{subequations}
The functions $z\mapsto {\rm c}(z,\mu,h_0)$ and $z\mapsto \partial_z{\rm c}(z,\mu,h_0)$ increase strictly for $\mu\in (0,S^*]$, while ${\rm c}(\cdot,0,h_0)\equiv 0$. Moreover,
$$
\big[(\mu,h_0)\mapsto {\rm c}(\cdot,\mu,h_0)\big]\in {\rm C}^1\big( [0,S^*]\times [0,h_*],{\rm C}^2([0,T])\big)
$$
for each $T>0$ 
with 
\begin{equation}\label{ww}
\partial_\mu {\rm c}(z,\mu,h_0)\ge 1\,,\qquad \partial_z\partial_\mu {\rm c}(z,\mu,h_0)\ge 0
\end{equation} 
for $z\ge 0$ and  $(\mu,h_0)\in [0,S^*]\times [0,h_*]$. In addition, 
$$
\partial_{h_0} {\rm c}(z,\mu,h_0)<0\,,\quad \partial_z\partial_{h_0} {\rm c}(z,\mu,h_0)<0
$$
for $ z> 0$ and $(\mu,h_0)\in (0,S^*]\times [0,h_*]$, while $\partial_{h_0}{\rm c}(z,0,h_0)=\partial_z\partial_{h_0}{\rm c}(z,0,h_0)=0$ for $z\ge 0$ and $h_0\in [0,h_*]$.
\end{lem}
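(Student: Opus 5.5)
The plan is to treat \eqref{shooting} as a scalar autonomous second-order initial value problem whose nonlinearity depends on the parameter $h_0$ through $S(h_0)$ from Lemma~\ref{L20}. We then read off all monotonicity statements from the variational equations. Since $S\in{\rm C}^1([0,h_*])$ and $r\in{\rm C}^1$, the map $({\rm c},h_0)\mapsto R({\rm c},h_0)$ is ${\rm C}^1$ as long as the second argument of $r$ stays in $\R_+$. That argument is
$$
\kappa_2^{-1}\bigl(\kappa_1 S(h_0)+\kappa_2(S^*-S(h_0))-\kappa_1{\rm c}\bigr),
$$
which is nonnegative for ${\rm c}\le S(h_0)+\tfrac{\kappa_2}{\kappa_1}(S^*-S(h_0))$, hence in particular on the range relevant for the shooting problem \eqref{AA}. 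For ${\rm c}$ beyond this threshold we would work with a fixed ${\rm C}^1$ extension of $r$ to $\R_+\times\R$ that keeps $r(0,\cdot)=0$, $\partial_1 r>0$, $\partial_2 r\le 0$ and grows at most linearly in the first variable (for instance a first-order Taylor continuation in $y$ at $y=0$, suitably cut off). This is the step I expect to be the main technical obstacle: the claims are for all $z\ge 0$, and solutions with $\mu>0$ are convex and increasing, so they leave any bounded range. Whatever extension is chosen must exclude finite-time blow-up, which a linear growth bound for $R$ in ${\rm c}$ guarantees by Gronwall. With this in place, Picard--Lindelöf gives a unique local solution, the growth bound makes it global, and the standard theorem on differentiable dependence on initial data and parameters gives the claimed ${\rm C}^1$ regularity into ${\rm C}^2([0,T])$ for every $T>0$ (the ${\rm C}^2$ part follows from the equation itself).

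\emph{Sign and monotonicity of ${\rm c}$.} Since $R(0,h_0)=r(0,\cdot)=0$, uniqueness gives ${\rm c}(\cdot,0,h_0)\equiv0$. For $\mu\in(0,S^*]$ we have $R(\mu,h_0)>0$ because $r(0,y)=0$ and $\partial_1 r>0$. Hence $\partial_z^2{\rm c}(0)>0$, so $\partial_z{\rm c}>0$ and ${\rm c}>\mu$ on some interval $(0,\varepsilon)$. A first-exit argument then shows these properties persist for all $z>0$: as long as ${\rm c}>0$, we have $\partial_z^2{\rm c}=R({\rm c},h_0)/\kappa_1>0$, so $\partial_z{\rm c}$ is strictly increasing and positive, hence ${\rm c}$ is strictly increasing and cannot return to $0$. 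Nonnegativity follows in the same way, and uniqueness separates the solution from the trivial one.

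\emph{Variational equations.} Set $w:=\partial_\mu{\rm c}$. It solves
$$
\kappa_1\partial_z^2 w=\partial_{\rm c}R\,w,\qquad w(0)=1,\quad \partial_z w(0)=0,
$$
where $\partial_{\rm c}R=\partial_1 r-\tfrac{\kappa_1}{\kappa_2}\partial_2 r\ge0$ by \eqref{r2}. On the maximal interval where $w>0$ we get $\partial_z^2 w\ge0$, hence $\partial_z w\ge0$ and $w\ge1$. The interval is therefore all of $[0,\infty)$, which gives \eqref{ww}. Next set $v:=\partial_{h_0}{\rm c}$. It solves
$$
\kappa_1\partial_z^2 v=\partial_{\rm c}R\,v+\partial_{h_0}R,\qquad v(0)=\partial_z v(0)=0,
$$
with $\partial_{h_0}R=\partial_2 r\cdot\kappa_2^{-1}(\kappa_1-\kappa_2)S'(h_0)$. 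Here $\partial_2 r<0$, $\kappa_1-\kappa_2<0$ by \eqref{kappa}, and $S'<0$ by Lemma~\ref{L20}, so $\partial_{h_0}R<0$ whenever ${\rm c}>0$. For $\mu=0$ we have ${\rm c}\equiv0$ independently of $h_0$, so $v\equiv0$ and $\partial_z v\equiv0$. For $\mu>0$ we get $\partial_z^2 v(0)<0$, so $v<0$ and $\partial_z v<0$ just to the right of $0$. If $z_1>0$ were the first point where $\partial_z v(z_1)=0$, then on $(0,z_1]$ we would have $v<0$, and hence $\partial_z^2 v<0$ there. This contradicts $\partial_z v$ rising back to $0$. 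Thus $\partial_z v<0$ and $v<0$ for all $z>0$. The only delicate point in these sign arguments is the strictness of $\partial_2 r<0$, which \eqref{r2} gives only on the open quadrant. It has to be checked (or supplied by the extension) in the region the trajectory actually visits.
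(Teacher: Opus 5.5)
Your argument follows the paper's proof step for step. You extend $r$ beyond $\R_+^2$, get ${\rm c}$ from standard ODE theory, use uniqueness for $\mu=0$ and convexity for $\mu>0$, and derive \eqref{ww} and the signs of $\partial_{h_0}{\rm c}$ from the variational equations for $w=\partial_\mu{\rm c}$ and $v=\partial_{h_0}{\rm c}$. Your first-exit arguments are a more careful version of the paper's.

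\textbf{The gap is the extension that should give existence on all of $[0,\infty)$.} If the extension keeps $\partial_2 r\le 0$ on $\{y<0\}$, then $r_{\rm ext}(x,y)\ge r(x,0)$ there. The second argument of $R$ decreases linearly to $-\infty$ as ${\rm c}\to\infty$, so $R({\rm c},h_0)\ge r({\rm c},0)$ for every ${\rm c}$ above your threshold. Hence ``at most linear growth'' cannot be arranged by the extension. It is a growth condition on $r(\cdot,0)$, and \eqref{GAshooting} does not impose one.

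For instance, $r(x,y)=x^3e^{-y}$ satisfies \eqref{GAshooting}. For it, every such extension gives $\kappa_1\partial_z^2{\rm c}\ge{\rm c}^3$ once ${\rm c}$ passes the threshold, which it does since ${\rm c}$ is convex and increasing. So ${\rm c}(\cdot,\mu,h_0)$ blows up at finite $z$ for every $\mu\in(0,S^*]$. Your concrete candidate $r(x,0)+\partial_2 r(x,0)\,y$ obeys the same lower bound. It is also in general not ${\rm C}^1$, because $x\mapsto\partial_2 r(x,0)$ is only continuous. The paper does not resolve this either: it cites \cite[Lemma~3.7]{GW_QAM} with an extension ``retaining \eqref{r2}'', which faces the same lower bound.

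\textbf{A fix that loses no sign information is to bound the nonlinearity instead of extending $r$ globally.} Extend $r$ only to a thin strip $\{-\varepsilon\le y<0\}$ over a compact $x$-range, keeping the sign conditions of \eqref{r2}. Some extension is needed anyway: for $(\mu,h_0)=(S^*,0)$ the trajectory enters $\{y<0\}$ immediately. Then replace $R({\rm c},h_0)$ by $R(\phi({\rm c}),h_0)$, where $\phi\in{\rm C}^1(\R_+)$ satisfies:
\begin{itemize}
\item $\phi$ is nondecreasing and $\phi>0$ on $(0,\infty)$;
\item $\phi({\rm c})={\rm c}$ for ${\rm c}\le S^*$;
\item $\phi\le S^*+\delta$, with $\delta\le\frac{\kappa_2}{\kappa_1}\varepsilon$ so that the second argument stays $\ge-\varepsilon$.
\end{itemize}

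This nonlinearity is bounded, so solutions exist for all $z\ge0$. It keeps exactly what your arguments use: it vanishes at ${\rm c}=0$, is positive for ${\rm c}>0$, is nondecreasing in ${\rm c}$, and has the same sign of $\partial_{h_0}$ as before. If the strip extension keeps $\partial_2 r<0$ for $x>0$, this also settles the strictness issue you flagged at the corner $(S^*,0)$. The later shooting lemmas only use ${\rm c}$ up to its crossing with $S(z)$, where ${\rm c}\le S^*$, together with these sign properties. With this modification the rest of your proof goes through unchanged.
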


\begin{proof}
 Clearly, the function $R\in {\rm C}^1\big(\R_+\times [0,h_*]\big)$ satisfies
\begin{equation}\label{dR}
\partial_{\rm c}R({\rm c},h_0)>0\,,\quad {\rm c}\ge 0\,,\quad h_0\in[0,h_*]\,,
\end{equation}
according to~\eqref{r2},~\eqref{kappa} and Lemma~\ref{L20}. The existence of a unique function ${\rm c}$ with the stated regularity properties thus follows along the lines of~\cite[Lemma~3.7]{GW_QAM} (extending $r$ to a function in ${\rm C}^1(\R^2,\R)$ retaining~\eqref{r2}). 

Since $R(0,h_0)=0$ due to~\eqref{r2}, we have for $\mu=0$ that ${\rm c}(\cdot,0,h_0)\equiv 0$ by uniqueness. For $\mu>0$ we obtain from~\eqref{A1ss} and~\eqref{r2} that $\partial_z^2{\rm c}(z,\mu,h_0)>0$, so that ${\rm c}(\cdot,\mu,h_0)$ and $\partial_z{\rm c}(\cdot,\mu,h_0)$ are strictly increasing.

For fixed $(\mu,h_0)\in [0,S^*]\times [0,h_*]$, the function
$w:=\partial_\mu {\rm c}(\cdot,\mu,h_0)\in{\rm C}^2\bigl([0,\infty)\bigr)$
satisfies
\begin{subequations}
\begin{align}
\kappa_1\partial_z^2 w(z)&=\partial_{\rm c} R\big({\rm c}(z),h_0\big)w(z)\,, \quad z>0\,, \label{w1}\\
w(0)&=1\,,\quad \partial_z w(0)=0\,. 
\end{align}
\end{subequations}
Then $w$ is (at least initially) convex due to~\eqref{dR} and $w(0)=1$. It follows that $\partial_zw(z)\geq 0$ and
\mbox{$w(z)\ge w(0)=1$} for $z\geq 0$ small and then for all $z\geq 0$ by
monotonicity. 

Similarly, for fixed $(\mu,h_0)\in (0,S^*]\times [0,h_*]$, the function
$v:=\partial_{h_0} {\rm c}(\cdot,\mu,h_0)\in{\rm C}^2\bigl([0,\infty)\bigr)$
satisfies
\begin{subequations}
\begin{align}
\kappa_1\partial_z^2 v(z)&=\partial_{\rm c} R({\rm c}(z),h_0)v(z)+q(z)\,, \quad z>0\,,\label{v1} \\
v(0)&=0\,,\quad \partial_z v(0)=0\,,\label{v2}
\end{align}
\end{subequations}
with
\begin{align*}
\partial_{\rm c} R({\rm c}(z),h_0)> 0
\end{align*}
and
\begin{align*}
 q(z):=\partial_2 r\left({\rm c}(z)\,,\,\kappa_2^{-1}\big((\kappa_1-\kappa_2) S(h_0)+\kappa_2 S^*-\kappa_1 {\rm c}(z)\big)\right)\frac{\kappa_1-\kappa_2}{\kappa_2}S'(h_0)<0
\end{align*}
for $z\ge 0$ according to~\eqref{r2}, ~\eqref{kappa}, and Lemma~\ref{L20}. Therefore, the function $v$ is concave	and hence $v(z)<0$ and $\partial_zv(z)<0$ for $z> 0$. This yields the assertion.
\end{proof}

It remains to show that there are unique $\mu$ and $h$ such that ${\rm c}(\cdot,\mu,h)$ solves~\eqref{AA}. Using  monotonicity arguments, we establish that the boundary conditions from~\eqref{AA4} can be satisfied. We first focus on the Dirichlet boundary condition.

\begin{lem}\label{L21}
Assume~\eqref{GAshooting} and \eqref{Z1x}.
Let ${\rm c}$  be the solution to the shooting problem~\eqref{shooting}.
Then, there exists a unique function $h\in {\rm C}^1([0,S^*]\times [0,h_*])$   such that 
\begin{align}\label{id}
{\rm c}\big(h(\mu,h_0),\mu,h_0\big)=S\big(h(\mu,h_0)\big)\,,\quad (\mu,h_0)\in [0,S^*]\times [0,h_*]\,.
\end{align}
For fixed $h_0\in [0,h_*]$, the mapping $\mu\mapsto h(\mu,h_0)$ is strictly decreasing on $[0,S^*]$ 
with $h(0,h_0)=h_*$ and $h(S^*,h_0)=0$, while for fixed $\mu\in (0,S^*)$, the mapping $h_0\mapsto h(\mu,h_0)$ is strictly increasing on~$[0,h_*]$.
\end{lem}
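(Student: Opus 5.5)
Fix $(\mu,h_0)\in[0,S^*]\times[0,h_*]$ and set
$$
G(z,\mu,h_0):={\rm c}(z,\mu,h_0)-S(z)\,,\qquad z\in[0,h_*]\,,
$$
with $S$ from Lemma~\ref{L20}. We define $h(\mu,h_0)$ as the unique zero of $G(\cdot,\mu,h_0)$ in $[0,h_*]$, and then read off regularity and monotonicity from the Implicit Function Theorem.

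\emph{Existence and uniqueness of the zero.} By Lemma~\ref{L17}, $z\mapsto{\rm c}(z,\mu,h_0)$ is non-decreasing (strictly increasing if $\mu>0$). By Lemma~\ref{L20}, $S$ is strictly decreasing. Hence
$$
\partial_z G(z,\mu,h_0)=\partial_z{\rm c}(z,\mu,h_0)-S'(z)>0\,,\qquad z\in[0,h_*]\,,
$$
so $G(\cdot,\mu,h_0)$ has at most one zero. At the endpoints, $G(0,\mu,h_0)=\mu-S^*\le0$ and $G(h_*,\mu,h_0)={\rm c}(h_*,\mu,h_0)\ge0$, using $S(0)=S^*$, $S(h_*)=0$ and ${\rm c}\ge0$. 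The intermediate value theorem then gives exactly one zero $h(\mu,h_0)\in[0,h_*]$, which is \eqref{id}. In particular, for $\mu=0$ we have ${\rm c}\equiv0$, so the zero is where $S$ vanishes, i.e.\ $h(0,h_0)=h_*$. For $\mu=S^*$ the zero is $z=0$, so $h(S^*,h_0)=0$.

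\emph{Regularity and monotonicity.} By Lemma~\ref{L17} together with $S\in{\rm C}^1$, the map $G$ is ${\rm C}^1$ in $(z,\mu,h_0)$, and $\partial_zG>0$ everywhere. The Implicit Function Theorem therefore gives $h\in{\rm C}^1$, including at the boundary of $[0,S^*]\times[0,h_*]$; there one extends $S$ and ${\rm c}$ in a ${\rm C}^1$ fashion slightly beyond the domain, or argues one-sidedly. It also yields
$$
\partial_\mu h=-\frac{\partial_\mu{\rm c}}{\partial_z{\rm c}-S'}\,,\qquad
\partial_{h_0} h=-\frac{\partial_{h_0}{\rm c}}{\partial_z{\rm c}-S'}\,,
$$
with all derivatives evaluated at $z=h(\mu,h_0)$. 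From \eqref{ww}, $\partial_\mu{\rm c}\ge1$, so $\partial_\mu h<0$ and $\mu\mapsto h(\mu,h_0)$ is strictly decreasing. For $\mu\in(0,S^*)$ the zero is interior, so $h(\mu,h_0)\in(0,h_*)$. Lemma~\ref{L17} gives $\partial_{h_0}{\rm c}(z,\mu,h_0)<0$ for $z>0$ and $\mu>0$, hence $\partial_{h_0}h>0$ and $h_0\mapsto h(\mu,h_0)$ is strictly increasing.

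\emph{Main obstacle.} The delicate point is showing $h\in(0,h_*)$ strictly when $\mu\in(0,S^*)$, since the strict sign of $\partial_{h_0}{\rm c}$ is only available for $z>0$. This follows from the boundary values: $G(0,\mu,h_0)=\mu-S^*<0$ for $\mu<S^*$, and $G(h_*,\mu,h_0)\ge\mu>0$ for $\mu>0$, because ${\rm c}$ is non-decreasing in $z$ with ${\rm c}(0)=\mu$. So neither endpoint can be the zero. The remaining step is routine bookkeeping for ${\rm C}^1$-regularity up to the boundary of the parameter rectangle.
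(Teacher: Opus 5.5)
Your proposal is correct and follows the paper's proof essentially step for step. Both use the same auxiliary function ${\rm c}(z,\mu,h_0)-S(z)$, its strict monotonicity in $z$ with the endpoint signs at $z=0$ and $z=h_*$, the Implicit Function Theorem, and the derivative identities combined with $\partial_\mu{\rm c}\ge 1$ and $\partial_{h_0}{\rm c}<0$. Your explicit check that $h(\mu,h_0)\in(0,h_*)$ for $\mu\in(0,S^*)$, which is needed so that the strict sign of $\partial_{h_0}{\rm c}$ at $z=h(\mu,h_0)>0$ applies, matches the paper's remark that the endpoint inequalities become equalities only for $\mu=S^*$ and $\mu=0$, respectively.
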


\begin{proof}
Setting
$$
A(z,\mu,h_0):={\rm c}(z,\mu,h_0)-S(z)\,,\qquad z\in [0,h_*]\,,\quad (\mu,h_0)\in [0,S^*]\times [0,h_*]\,,
$$ 
we infer from Lemma~\ref{L17} and Lemma~\ref{L20} that
\begin{align*}
A(0,\mu,h_0)&={\rm c}(0,\mu,h_0)-S(0)=\mu-S^*\le 0\,,\\
A(h_*,\mu,h_0)&={\rm c}(h_*,\mu,h_0)-S(h_*)={\rm c}(h_*,\mu,h_0)\ge 0\,,
\end{align*}
with equality only for $\mu=S^*$ and $\mu=0$, respectively, while
\[
\partial_z A(z,\mu,h_0)=\partial_z{\rm c}(z,\mu,h_0)-S'(z)>0\,,\quad z\in [0,h_*]\,,
\]
since $S'<0$ on $[0,h_*]$ by Lemma~\ref{L20} and $\partial_z{\rm c}\ge0$ by Lemma~\ref{L17} with
$\partial_z{\rm c}(0,\mu,h_0)=0$, so that $A(\cdot,\mu,h_0)$ is strictly increasing on $[0,h_*]$. Hence, for
every $(\mu,h_0)\in [0,S^*]\times [0,h_*]$ there is a unique $h(\mu,h_0)\in [0,h_*]$ with
\[
A(h(\mu,h_0),\mu,h_0)=0\,.
\]
Since ${\rm c}(\cdot,0,h_0)\equiv 0$ implies $A(h_*,0,h_0)=0$ while ${\rm c}(0,S^*,h_0)=S^*=S(0)$  implies
$A(0,S^*,h_0)=0$, we have $h(0,h_0)=h_*$ and $h(S^*,h_0)=0$. 
This establishes~\eqref{id}. The Implicit Function Theorem then ensures $h\in {\rm C}^1([0,S^*]\times [0,h_*])$. Differentiating~\eqref{id} and using  Lemma~\ref{L17} yields for $(\mu,h_0)\in [0,S^*]\times [0,h_*]$
\begin{align}\label{381}
\big[\partial_z{\rm c}(h(\mu,h_0),\mu,h_0)-S'(h(\mu,h_0)) \big]\partial_{\mu} h(\mu,h_0)=-\partial_\mu {\rm c}(h(\mu,h_0),\mu,h_0)\le -1
\end{align}
while for $(\mu,h_0)\in (0,S^*)\times [0,h_*]$ we have
\begin{align}\label{382}
\big[\partial_z {\rm c}(h(\mu,h_0),\mu,h_0)-S'(h(\mu,h_0)) \big]\partial_{h_0} h(\mu,h_0)=-\partial_{h_0} {\rm c}(h(\mu,h_0),\mu,h_0)>0\,.
\end{align}
Therefore, since
\begin{equation}\label{poss}
\partial_z{\rm c}(h(\mu,h_0),\mu,h_0)-S'(h(\mu,h_0))>0
\end{equation}
by Lemma~\ref{L20} and Lemma~\ref{L17}, we deduce for $h_0\in [0,h_*]$ that
$\partial_{\mu} h(\mu,h_0)<0$ for $\mu\in [0,S^*]$ and $\partial_{h_0} h(\mu,h_0)>0$ for $\mu\in (0,S^*)$ as claimed.
\end{proof}

In order to show that the Neumann boundary condition in~\eqref{AA4} can be satisfied, we require the following auxiliary result:

\begin{lem}\label{L22}
Assume~\eqref{GAshooting} and~\eqref{ZZ}.  Given $(z,\mu,h_0)\in [0,h_*]\times [0,S^*]\times [0,h_*]$ and $h\in [0,h_*]$ set
\[
w(z,\mu,h_0):=\partial_\mu{\rm c}(z,\mu,h_0)\,,\qquad
\psi(h):=\frac{k_2-k_1{\rm r}(S(h))}{\Delta(S(h))}\,,
\]
and
\begin{equation}\label{Mdef}
\begin{aligned}
M(z,\mu,h_0):=&-\big(k_1 R({\rm c}(z,\mu,h_0),h_0)-b\big)w(z,\mu,h_0)\\
&+k_1\kappa_1\big(\partial_z{\rm c}(z,\mu,h_0)-S'(h(\mu,h_0))\big)\partial_z w(z,\mu,h_0)\\
&+\psi(h(\mu,h_0))d(h(\mu,h_0))\,.
\end{aligned}
\end{equation}
Then, for fixed $(\mu,h_0)\in [0,S^*]\times [0,h_*]$, the function
$$
M(\cdot,\mu,h_0):[0,h_*]\to \R
$$ 
is strictly increasing. Moreover, the function
$M(0,\cdot,h_0):[0,S^*]\to\R$ with
\begin{equation}\label{M0}
M(0,\mu,h_0)=- k_1 R(\mu,h_0)+b +\frac{k_2-k_1{\rm r}\big(S(h(\mu,h_0))\big)}{\Delta\big(S(h(\mu,h_0))\big)}d\big(h(\mu,h_0)\big)\,,\quad \mu\in [0,S^*]\,,
\end{equation}
is strictly decreasing for each $h_0\in [0,h_*]$, and there exists (a unique) $\underline{\mu}(h_0)\in (0,S^*)$ such that 
$$
M(0,\underline{\mu}(h_0),h_0)=0\,.
$$ 
In particular,
\begin{equation}\label{M}
M(z,\mu,h_0)> M(0,\mu,h_0)\ge 0\,,\qquad z\in (0,h_*]\,,\quad \mu\in [0,\underline{\mu}(h_0)]\,,\quad h_0\in [0,h_*] \,.
\end{equation}
\end{lem}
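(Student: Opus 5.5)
The plan is to treat the two monotonicity claims separately, by direct differentiation, and then to get $\underline{\mu}(h_0)$ from the intermediate value theorem. Throughout, $(\mu,h_0)$ is fixed in the first part. Then $h(\mu,h_0)$ is fixed, so the term $S'(h(\mu,h_0))$ and the term $\psi(h(\mu,h_0))d(h(\mu,h_0))$ in~\eqref{Mdef} do not depend on $z$.

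\emph{Monotonicity in $z$.} Differentiate~\eqref{Mdef} in $z$ and use the two ODEs $\kappa_1\partial_z^2{\rm c}=R({\rm c},h_0)$ and $\kappa_1\partial_z^2 w=\partial_{\rm c}R({\rm c},h_0)\,w$ (the latter from~\eqref{w1}). The term $-k_1R\,\partial_z w$ cancels against $k_1\kappa_1\partial_z^2{\rm c}\,\partial_z w$. The term $-k_1\partial_{\rm c}R\,\partial_z{\rm c}\,w$ cancels against the $\partial_z{\rm c}$-part of $k_1\kappa_1(\partial_z{\rm c}-S')\partial_z^2 w$. What remains should be
$$
\partial_z M(z,\mu,h_0)=b\,\partial_z w(z,\mu,h_0)-k_1S'\big(h(\mu,h_0)\big)\,\partial_{\rm c}R\big({\rm c}(z,\mu,h_0),h_0\big)\,w(z,\mu,h_0)\,.
$$
By~\eqref{ww}, $w\ge1$ and $\partial_z w\ge0$. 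By~\eqref{dR}, $\partial_{\rm c}R>0$, and by Lemma~\ref{L20}, $S'<0$. Hence $\partial_z M>0$ on $[0,h_*]$, so $M(\cdot,\mu,h_0)$ is strictly increasing. This cancellation is the key computation and the step I would check most carefully; everything else is bookkeeping.

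\emph{The function $M(0,\cdot,h_0)$.} Since $w(0)=1$, $\partial_z w(0)=0$ and ${\rm c}(0)=\mu$, formula~\eqref{M0} follows directly from~\eqref{Mdef}. For the monotonicity in $\mu$, write $\psi=1-\alpha/\Delta(S(h))$. Because $S$ is decreasing (Lemma~\ref{L20}) and ${\rm r}$ is nondecreasing, $h\mapsto\Delta(S(h))$ is nondecreasing. Hence $\psi$ is nondecreasing in $h$, and $\psi\ge0$ by~\eqref{Z1x}. Together with~\eqref{d}, the map $h\mapsto\psi(h)d(h)$ is therefore nonnegative and nondecreasing. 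Composing with $\mu\mapsto h(\mu,h_0)$, which is strictly decreasing by Lemma~\ref{L21}, makes this term nonincreasing in $\mu$. The remaining term $-k_1R(\mu,h_0)$ is strictly decreasing in $\mu$ by~\eqref{dR}. Hence $M(0,\cdot,h_0)$ is strictly decreasing.

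\emph{Endpoint values and conclusion.} At $\mu=0$ we have $R(0,h_0)=0$ and $h(0,h_0)=h_*$, so
$$
M(0,0,h_0)=b+\psi(h_*)d(h_*)>0\,.
$$
At $\mu=S^*$ we have $h(S^*,h_0)=0$ and $S(0)=S^*$, so
$$
M(0,S^*,h_0)=-k_1R(S^*,h_0)+b+\frac{k_2-k_1r(S^*,0)}{\alpha+k_2-k_1r(S^*,0)}\,d(0)\,.
$$
The second argument of $r$ in $R(S^*,h_0)$ equals $\kappa_2^{-1}(\kappa_2-\kappa_1)(S^*-S(h_0))$. Since $S(h_0)\ge0$, this is at most $\kappa_2^{-1}(\kappa_2-\kappa_1)S^*$. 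By $\partial_2 r<0$ it follows that $R(S^*,h_0)\ge r\big(S^*,\kappa_2^{-1}(\kappa_2-\kappa_1)S^*\big)$. Condition~\eqref{h2eff}, divided by $\Delta(S^*)>0$, then gives $M(0,S^*,h_0)<0$.

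The intermediate value theorem and strict monotonicity yield a unique $\underline{\mu}(h_0)\in(0,S^*)$ with $M(0,\underline{\mu}(h_0),h_0)=0$. Moreover, $M(0,\mu,h_0)\ge0$ for $\mu\le\underline{\mu}(h_0)$. Combining this with the strict increase in $z$ gives~\eqref{M}.
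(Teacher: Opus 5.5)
Your proposal is correct and follows the paper's proof essentially step for step. It uses the same cancellation giving $\partial_z M=b\,\partial_z w-k_1S'(h(\mu,h_0))\,\partial_{\rm c}R({\rm c},h_0)\,w>0$ and the same reduction to \eqref{M0} at $z=0$. The endpoint evaluations also match: $R(0,h_0)=0$ and $h(0,h_0)=h_*$ at $\mu=0$, and at $\mu=S^*$ the bound $R(S^*,h_0)\ge r\big(S^*,\kappa_2^{-1}(\kappa_2-\kappa_1)S^*\big)$ combined with \eqref{h2eff}. The only difference is cosmetic: you get the monotonicity of $h\mapsto\psi(h)d(h)$ from $\psi=1-\alpha/\Delta(S(h))$ and composition of monotone maps, whereas the paper differentiates and uses $(\psi d)'\ge0$ together with $\partial_\mu h<0$.
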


\begin{proof}
Differentiating~\eqref{Mdef} with respect to $z$ and using~\eqref{A1ss} and~\eqref{w1} yields
\begin{equation}\label{dzM}
\partial_z M(z,\mu,h_0)=b\partial_z w(z,\mu,h_0)-k_1\partial_{\rm c} R({\rm c}(z,\mu,h_0),h_0)w(z,\mu,h_0)S'(h(\mu,h_0))>0
\end{equation}
for $(z,\mu,h_0)\in [0,h_*]\times [0,S^*]\times [0,h_*]$, where we used the monotonicity properties of $R$, $w$, and $S$ (see~\eqref{dR}, \eqref{ww}, and Lemma~\ref{L20}). This ensures that \mbox{$M(\cdot,\mu,h_0):[0,h_*]\to \R$} is strictly increasing.

Next, the boundary conditions ${\rm c}(0,\mu,h_0)=\mu$, $\partial_z{\rm c}(0,\mu,h_0)=0$, $w(0,\mu,h_0)=1$, and $\partial_z w(0,\mu,h_0)=0$ from Lemma~\ref{L17} reduce~\eqref{Mdef} at $z=0$ to~\eqref{M0}.
Differentiating~\eqref{M0} then with respect to $\mu$  yields
\begin{equation}\label{dmuM}
\partial_\mu M(0,\mu,h_0)=-k_1\partial_{\rm c} R(\mu,h_0)+(\psi d)'(h(\mu,h_0))\partial_\mu h(\mu,h_0)<0\,.
\end{equation}
Indeed, the first term in~\eqref{dmuM} is strictly negative by~\eqref{dR}, while the second is non-positive since $\partial_\mu h(\mu,h_0)<0$ by~\eqref{381} and
\[
(\psi d)'(h)=\psi'(h)d(h)+\psi(h)d'(h)\ge 0\,,
\]
where $\psi'\geq 0$ because $\rm r$ is non-decreasing and $S$ is strictly decreasing, $\psi\ge 0$ since $k_1{\rm r}(S(h))\le k_1 r(S^*,0)\le k_2$, and $d'\ge 0$  by~\eqref{d}. Consequently, $M(0,\cdot,h_0):[0,S^*]\to \R$ is strictly decreasing.

Since $h(0,h_0)=h_*$, $S(h_*)=0$, and
$R(0,h_0)=0$
by~\eqref{r2}, it follows from~\eqref{M0} that
\[
M(0,0,h_0)=b+\frac{k_2}{\alpha+k_2}\,d(h_*)>0\,,
\]
while $h(S^*,h_0)=0$ and $S(0)=S^*$ together with~\eqref{M0} that
\begin{align*}
M(0,S^*,h_0)&=-\big(k_1 R(S^*,h_0)-b\big)+\frac{k_2-k_1{\rm r}\big(S^*\big)}{\Delta(S^*)}\,d(0)\\
&\le -\big(k_1 r\big(S^*,\kappa_2^{-1}(\kappa_2-\kappa_1)S^*\big)-b\big)+\frac{k_2-k_1{\rm r}\big(S^*\big)}{\Delta(S^*)}\,d(0)<0\,,
\end{align*}
where we used for the second inequality assumption~\eqref{h2eff} and for the first inequality the fact that
$$
R(S^*,h_0)=r\big(S^*,\kappa_2^{-1}(\kappa_2-\kappa_1)(S^*-S(h_0))\big)\ge r\big(S^*,\kappa_2^{-1}(\kappa_2-\kappa_1)S^*\big)
$$
by the monotonicity properties~\eqref{r2} of $r$ and $S(h_0)\in [0,S^*]$.

Finally, since $M(0,\cdot,h_0)$ is strictly decreasing on $[0,S^*]$, with $M(0,0,h_0)>0$ and $M(0,S^*,h_0)<0$, there exists a unique $\underline\mu(h_0)\in(0,S^*)$ such that $M(0,\underline\mu(h_0),h_0)=0$. The just established monotonicity properties of $M$ now imply~\eqref{M}.
\end{proof}

In fact, the mapping $\underline{\mu}\in {\rm C}^1\big([0,h_*], (0,S^*)\big)$ is strictly increasing as is readily seen by the Implicit Function Theorem and differentiating the identity $M(0,\underline\mu(h_0),h_0)=0$.\\

We are now in a position to address the Neumann boundary condition in~\eqref{AA4}. We thus examine the zeros $\mu$ of the function $B$ given by
\begin{equation}\label{400}
\begin{split}
B(\mu,h_0):=\, & k_1\kappa_1 \partial_z {\rm c}(h(\mu,h_0),\mu,h_0)-bh(\mu,h_0)\\
&\ -\frac{k_2-k_1 {\rm r}(S(h(\mu,h_0)))}{\Delta(S(h(\mu,h_0)))}d\big(h(\mu,h_0)\big)h(\mu,h_0)
\end{split}
\end{equation}
for $\mu\in [0,S^*]$ and $h_0\in [0,h_*]$. Note that $B(S^*,h_0)=0$ for any $h_0\in [0,h_*]$, since $h(S^*,h_0)=0$. We next show that $B(\cdot,h_0)$ possesses exactly one zero in the interval $[0,S^*]$ other than  $\mu=S^*$  (which then corresponds to the unique nontrivial equilibrium while $\mu=S^*$ gives the trivial equilibrium).

\begin{lem}\label{L40}
Assume~\eqref{GAshooting} and~\eqref{ZZ}. Let  $\underline{\mu}\in {\rm C}^1\big([0,h_*], (0,S^*)\big)$ be as in Lemma~\ref{L22}. Then, there exists a function $\mu_*\in {\rm C}^1([0,h_*])$ such that, for every
$h_0\in[0,h_*]$, one has $\mu_*(h_0)\in(0,\underline{\mu}(h_0)]$, and
$\mu=\mu_*(h_0)$ and $\mu=S^*$ are the only zeros of
$B(\cdot,h_0):[0,S^*]\to\R$.
\end{lem}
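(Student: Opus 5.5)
The plan is to differentiate $B(\cdot,h_0)$ with respect to $\mu$ and compare the result with the function $M$ of Lemma~\ref{L22}. Write $h=h(\mu,h_0)$, $w=\partial_\mu{\rm c}$ and $\mathrm{den}:=\partial_z{\rm c}(h,\mu,h_0)-S'(h)>0$, which is positive by \eqref{poss}. Using $\kappa_1\partial_z^2{\rm c}=R({\rm c},h_0)$, the identity ${\rm c}(h,\mu,h_0)=S(h)$ (so that $R({\rm c}(h),h_0)$ is evaluated at the interface), and $\partial_\mu h=-w/\mathrm{den}$ from \eqref{381}, I expect
\begin{equation*}
\mathrm{den}\cdot\partial_\mu B(\mu,h_0)=k_1\kappa_1\,\mathrm{den}\,\partial_z w-\big(k_1R({\rm c}(h),h_0)-b\big)w+w\,\psi(h)d(h)+w\,(\psi d)'(h)\,h ,
\end{equation*}
with all functions of $z$ evaluated at $z=h(\mu,h_0)$. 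Since $w\ge1$ by \eqref{ww}, and $\psi d\ge0$ and $(\psi d)'\ge0$ (as shown in the proof of Lemma~\ref{L22}), the right-hand side is bounded below by $M(h(\mu,h_0),\mu,h_0)$. Consequently \eqref{M} gives $\partial_\mu B(\mu,h_0)>0$ for all $\mu\in[0,\underline\mu(h_0)]$.

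Two boundary values then fix the signs at the ends.
\begin{itemize}
\item At $\mu=0$ we have ${\rm c}\equiv0$ and $h=h_*$, so $B(0,h_0)=-bh_*-\psi(h_*)d(h_*)h_*<0$.
\item At $\mu=S^*$ we have $h=0$ and $w(0)=1$, so the formula reduces to $\mathrm{den}\cdot\partial_\mu B(S^*,h_0)=M(0,S^*,h_0)<0$, by the proof of Lemma~\ref{L22}. Together with $B(S^*,h_0)=0$, this means $B(\cdot,h_0)>0$ on a left neighbourhood of $S^*$.
\end{itemize}
Hence $B(\cdot,h_0)$ is strictly increasing from a negative value on $[0,\underline\mu(h_0)]$. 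It has at most one zero there, and it has a zero in $(0,S^*)$ by the intermediate value theorem.

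The remaining task, which I expect to be the main obstacle, is to show that every zero $\mu_0\in(0,S^*)$ of $B(\cdot,h_0)$ is transversal with $\partial_\mu B(\mu_0,h_0)>0$. By the sign information above, this yields exactly one interior zero, and it lies in $(0,\underline\mu(h_0)]$. Indeed, two consecutive upward-crossing zeros would force a downward crossing between them, and $B>0$ near $S^*$ rules out an odd configuration.

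To get the sign at a zero, I will use that both $\phi:=\partial_z{\rm c}$ and $w$ solve the linear equation $\kappa_1 v''=\partial_{\rm c}R({\rm c},h_0)v$. Their Wronskian is therefore constant:
\begin{equation*}
\kappa_1\big(w\,\partial_z\phi-\phi\,\partial_z w\big)\equiv R(\mu,h_0).
\end{equation*}
This gives $\kappa_1\partial_z w=\big(wR({\rm c},h_0)-R(\mu,h_0)\big)/\partial_z{\rm c}$. At a zero I substitute $k_1\kappa_1\partial_z{\rm c}(h)=\big(b+\psi(h)d(h)\big)h$ from $B=0$. The terms $-k_1Rw$ in the formula for $\mathrm{den}\cdot\partial_\mu B$ should then cancel against $k_1wR\,\partial_z{\rm c}/\partial_z{\rm c}$. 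What remains, apart from the manifestly nonnegative contributions $bw$, $w\psi d$ and $w(\psi d)'h$, are the terms $-k_1R(\mu,h_0)\,\mathrm{den}/\partial_z{\rm c}$ and $-k_1\kappa_1S'(h)\partial_z w$.

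I will try to control these remaining terms using $R(\mu,h_0)\le R({\rm c}(h),h_0)$ together with the bound $\partial_z{\rm c}(h)\,h\ge\ldots$ obtained from convexity of ${\rm c}$. If this direct estimate fails, the fallback is the argument of \cite{NW26}: a comparison at the zero between $\partial_z{\rm c}$ and $w$ via the Wronskian. Once transversality is established, the regularity $\mu_*\in{\rm C}^1([0,h_*])$ follows from the Implicit Function Theorem applied to $B(\mu_*(h_0),h_0)=0$, since $\partial_\mu B(\mu_*(h_0),h_0)>0$.
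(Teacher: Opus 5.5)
Your formula for $\mathrm{den}\cdot\partial_\mu B$ is correct and is exactly what the paper uses. So are the resulting bound $\mathrm{den}\cdot\partial_\mu B\ge M(h(\mu,h_0),\mu,h_0)$, which gives $\partial_\mu B>0$ on $[0,\underline\mu(h_0)]$, and the sign $B(0,h_0)<0$.

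The gap is the step you flag as the main obstacle, and it is genuine: nothing you prove excludes zeros of $B(\cdot,h_0)$ in $(\underline\mu(h_0),S^*)$. Even if transversality at every zero were granted, you would get exactly one interior zero but not its location. If $B(\underline\mu(h_0),h_0)<0$, that zero would lie in $(\underline\mu(h_0),S^*)$, contradicting the claim $\mu_*(h_0)\le\underline\mu(h_0)$.

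The Wronskian route also does not close as sketched. Substituting $\kappa_1\partial_z{\rm c}\,\partial_z w=wR({\rm c},h_0)-R(\mu,h_0)$ into your identity gives, for every $\mu$ (all functions of $z$ evaluated at $z=h=h(\mu,h_0)$),
\[
\mathrm{den}\cdot\partial_\mu B=bw-k_1R(\mu,h_0)-k_1\kappa_1S'(h)\,\partial_z w+w\,\psi(h)d(h)+w\,(\psi d)'(h)\,h\ \ge\ M(0,\mu,h_0)\,.
\]
This bound never uses the relation $B=0$, and it is negative precisely when $\mu>\underline\mu(h_0)$. So it gives no information in the region where you need it.

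The missing idea is a sign estimate for $B$ itself rather than for $\partial_\mu B$. Since ${\rm c}(z,\mu,h_0)\ge\mu$ and $R(\cdot,h_0)$ is increasing, integrating \eqref{A1ss} gives
\[
\kappa_1\partial_z{\rm c}\big(h(\mu,h_0),\mu,h_0\big)=\int_0^{h(\mu,h_0)}R\big({\rm c}(z,\mu,h_0),h_0\big)\,\rd z\ge R(\mu,h_0)\,h(\mu,h_0)\,.
\]
Comparing with \eqref{M0}, this yields $B(\mu,h_0)\ge -h(\mu,h_0)\,M(0,\mu,h_0)$. By Lemma~\ref{L22}, $M(0,\cdot,h_0)$ is strictly decreasing with its zero at $\underline\mu(h_0)$, and $h(\mu,h_0)>0$ for $\mu<S^*$. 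Hence $B(\cdot,h_0)>0$ on $(\underline\mu(h_0),S^*)$ and $B(\underline\mu(h_0),h_0)\ge 0$.

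Combined with your monotonicity on $[0,\underline\mu(h_0)]$ and $B(0,h_0)<0$, this gives exactly one zero in $(0,\underline\mu(h_0)]$ and none in $(\underline\mu(h_0),S^*)$. The transversality program, the Wronskian and the local analysis at $\mu=S^*$ then become unnecessary. If you prefer to keep your route, the same bound rescues it: at any zero $\mu_0<S^*$, $B=0$ together with this estimate forces $M(0,\mu_0,h_0)\ge 0$, i.e.\ $\mu_0\le\underline\mu(h_0)$, which is exactly where your derivative bound already applies.
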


\begin{proof}
Fix $h_0\in [0,h_*]$. It follows from \eqref{r2}, Lemma~\ref{L20}, Lemma~\ref{L17}, and Lemma~\ref{L21} that
$$
h(0,h_0)=h_*\,,\quad S(h_*)=0\,,\quad {\rm r}(0)=r(0,S^*)=0\,, \quad \partial_z {\rm c}(\cdot,0,h_0)\equiv 0\,,
$$ 
hence
$$
B(0,h_0)=-bh_*-\frac{k_2}{\alpha+k_2}d(h_*)h_*<0\,.
$$
while clearly $B(S^*,h_0)=0$, since $h(S^*,h_0)=0$.  

Next, observe from Lemma~\ref{L17} that ${\rm c}(z,\mu,h_0)\ge{\rm c}(0,\mu,h_0)=\mu$ for $z\ge 0$ and $\mu\in[0,S^*]$, hence, by~\eqref{A1ss} and the monotonicity~\eqref{dR} of $R$,  we obtain
\[
\kappa_1\partial_z{\rm c}(h(\mu,h_0),\mu,h_0)=\int_0^{h(\mu,h_0)}R({\rm c}(z,\mu,h_0),h_0)\,\rd z\ge R(\mu,h_0)h(\mu,h_0)\,,\quad \mu\in[0,S^*]\,.
\]
Therefore, 
$$
B(\mu,h_0)\ge -h(\mu,h_0)M(0,\mu,h_0)>0\,,\qquad \mu\in(\underline\mu(h_0),S^*)\,,
$$
according to Lemma~\ref{L22} and since $h(\mu,h_0)>0$. 

Finally, we show that $B(\cdot,h_0)$ is strictly increasing on the interval $[0,\underline\mu(h_0))$.  Setting $w(z,\mu,h_0):=\partial_\mu {\rm c}(z,\mu,h_0)$ and using~\eqref{381} and \eqref{shooting} yields that
\begin{align*}
\big[\partial_z{\rm c}&(h(\mu,h_0),\mu, h_0)-S'(h(\mu,h_0))\big] \partial_\mu B(\mu,h_0)\\
&=-\big[ k_1 R\big({\rm c}(h(\mu,h_0),\mu,h_0),h_0\big)-b\big] w\big(h(\mu, h_0),\mu,h_0\big)\\
&\quad +k_1\kappa_1 \big[\partial_z{\rm c}(h(\mu,h_0),\mu,h_0)-S'(h(\mu,h_0))\big]\partial_zw\big(h(\mu,h_0),\mu,h_0\big) \\
&\quad +\frac{k_2-k_1{\rm r}\big(S(h(\mu,h_0))\big)}{\Delta\big(S(h(\mu,h_0))\big)}\big[d'\big(h(\mu,h_0)\big)h(\mu,h_0)+d\big(h(\mu,h_0)\big)\big]w\big(h(\mu,h_0),\mu,h_0\big)\\
&\quad  -\frac{k_1\alpha}{\big(\Delta\big(S(h(\mu,h_0))\big)\big)^2} {\rm r}'\big(S(h(\mu,h_0))\big)S'(h(\mu,h_0)) d\big(h(\mu,h_0)\big)h(\mu,h_0)w\big(h(\mu, h_0),\mu, h_0\big)\\
&\ge -\big[ k_1 R\big({\rm c}(h(\mu,h_0),\mu,h_0),h_0\big)-b\big] w\big(h(\mu, h_0),\mu,h_0\big)\\
&\quad +k_1\kappa_1 \big[\partial_z{\rm c}(h(\mu,h_0),\mu,h_0)-S'(h(\mu,h_0))\big]\partial_zw\big(h(\mu,h_0),\mu,h_0\big) \\
&\quad +\frac{k_2-k_1{\rm r}\big(S(h(\mu,h_0))\big)}{\Delta\big(S(h(\mu,h_0))\big)}d\big(h(\mu,h_0)\big)\\
&= M\big(h(\mu,h_0),\mu,h_0\big)
\end{align*}
for $\mu\in [0,S^*]$, where we used for the inequality the positivity of the involved quantities, $w(z,\mu,h_0)\ge 1$, that ${\rm r}$ and $d$ are increasing, and that $S$ is decreasing along with \eqref{Z1x} to neglect the last term and part of the second-to-last term. Therefore, since $h(\mu,h_0)>0$ for $\mu\in [0,S^*)$ according to Lemma~\ref{L21} and since $\underline{\mu}(h_0)\in (0,S^*)$ by Lemma~\ref{L22}, 
we derive from~\eqref{poss} and~\eqref{M} that 
\begin{equation}\label{Bq}
\partial_\mu B(\mu,h_0)>0\,,\quad \mu\in [0,\underline{\mu}(h_0)]\,.
\end{equation}  
Consequently, the function $B(\cdot,h_0)$ is strictly increasing on $[0,\underline\mu(h_0)]$, with $B(0,h_0)<0$ and $B(\mu,h_0)>0$ for  $\mu\in (\underline\mu(h_0),S^*)$. Therefore, there exists a unique $\mu_*(h_0)\in(0,\underline\mu(h_0)]$ such that
$$
B(\mu_*(h_0),h_0)=0\,.
$$
The Implicit Function Theorem and~\eqref{Bq} imply that $\mu_*\in {\rm C}^1([0,h_*])$.
\end{proof}

The final step is to show the existence of a unique $h_0\in(0,h_*)$ with $h(\mu_*(h_0),h_0)=h_0$. We establish this through the following auxiliary result and the lemma that builds on it.

\begin{lem}\label{L30}
Assume~\eqref{GAshooting} and \eqref{Z1x}.  Define
\begin{equation*}
K(z,\mu,h_0):=k_1\kappa_1\big[\partial_z\partial_{h_0}{\rm c}(z,\mu,h_0)\partial_\mu{\rm c}(z,\mu,h_0)-\partial_z\partial_\mu{\rm c}(z,\mu,h_0)\partial_{h_0}{\rm c}(z,\mu,h_0)\big]
\end{equation*}
for $(z,\mu,h_0)\in[0,\infty)\times[0,S^*]\times[0,h_*]$. Then $K(z,\mu,h_0)\le 0$.
\end{lem}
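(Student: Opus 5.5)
The quantity $K$ is, up to the factor $k_1\kappa_1$, a Wronskian-type expression of the two variational functions
$$
w:=\partial_\mu{\rm c}(\cdot,\mu,h_0)\,,\qquad v:=\partial_{h_0}{\rm c}(\cdot,\mu,h_0)\,,
$$
namely $K=k_1\kappa_1\,(\partial_z v\, w-\partial_z w\, v)$. The plan is to differentiate $K$ in $z$, use the linear equations satisfied by $w$ and $v$ from the proof of Lemma~\ref{L17}, and integrate from $z=0$.

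First, from \eqref{w1} and \eqref{v1}--\eqref{v2} we have
$$
\kappa_1\partial_z^2 w=\partial_{\rm c}R({\rm c},h_0)\,w\,,\qquad \kappa_1\partial_z^2 v=\partial_{\rm c}R({\rm c},h_0)\,v+q\,.
$$
Differentiating $K$ in $z$, the terms $\partial_z v\,\partial_z w$ cancel. Substituting the two equations, the potential terms $\partial_{\rm c}R\,v\,w$ cancel as well, which leaves
$$
\partial_z K(z,\mu,h_0)=k_1\kappa_1\big(\partial_z^2 v\,w-\partial_z^2 w\,v\big)=k_1\,q(z)\,w(z)\,.
$$

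Second, at $z=0$ we use $v(0)=\partial_z v(0)=0$, which gives $K(0,\mu,h_0)=0$. Hence
$$
K(z,\mu,h_0)=k_1\int_0^z q(s)\,w(s)\,\rd s\,.
$$
By \eqref{ww} we have $w\ge1>0$. Moreover, $q\le 0$ because $\partial_2 r\le0$ (with strict sign on the interior by \eqref{r2} and extended to the boundary by continuity), $\kappa_1-\kappa_2<0$ by \eqref{kappa}, and $S'(h_0)<0$ by Lemma~\ref{L20}. Therefore $K\le0$.

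Third, the degenerate case $\mu=0$, where Lemma~\ref{L17} only provides $v\equiv0$, is immediate: then $K\equiv0$. The case $h_0\in[0,h_*]$ including the endpoints is covered as well, since $S\in{\rm C}^1([0,h_*])$ with $S'<0$ on the closed interval.

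The only real point of care is justifying the sign of $q$ where ${\rm c}$ or the second argument of $r$ lies on the boundary of $(0,\infty)^2$. Since only $q\le0$ is needed, it suffices to note that $\partial_2 r\le0$ on $\R_+^2$ by continuity of the ${\rm C}^1$ function $r$. The rest is a direct calculation, and I do not expect any serious obstacle.
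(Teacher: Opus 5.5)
Your argument is correct and is essentially the paper's proof: $K(0,\mu,h_0)=0$ by \eqref{v2}, and differentiating with \eqref{w1}, \eqref{v1} gives $\partial_z K=k_1 q\,\partial_\mu{\rm c}\le 0$. One small caveat: for large $z$ the second argument of $r$ in $q$ can become negative, since ${\rm c}$ grows without bound, so there the sign $\partial_2 r\le 0$ comes from the ${\rm C}^1$ extension of $r$ retaining \eqref{r2} (used in the proof of Lemma~\ref{L17}) rather than from continuity on $\R_+^2$.
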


\begin{proof}
It readily follows from~\eqref{v2} that
$K(0,\mu,h_0)=0$ while
differentiating $K$ with respect to $z$ and using~\eqref{w1} and \eqref{v1} gives
\begin{align}\label{Kprime}
\partial_z K(z,\mu,h_0)=&k_1\partial_\mu{\rm c}(z,\mu,h_0)\frac{\kappa_1-\kappa_2}{\kappa_2}S'(h_0)\\
&\times\partial_2 r\Big({\rm c}(z,\mu,h_0),\kappa_2^{-1}\big((\kappa_1-\kappa_2)S(h_0)+\kappa_2 S^*-\kappa_1{\rm c}(z,\mu,h_0)\big)\Big)\le 0 \nonumber
\end{align}
for $z\ge 0$, where we use Lemma~\ref{L20}, Lemma~\ref{L17},~\eqref{kappa}, and~\eqref{r2} to conclude the inequality.
\end{proof}

\begin{lem}\label{P30}
Assume~\eqref{GAshooting} and~\eqref{ZZ}. Let $h=h(\mu,h_0)$ and $\mu_*=\mu_*(h_0)$ be the functions from Lemma~\ref{L21} and Lemma~\ref{L40}, respectively. Then, there exists a unique $h_\star\in (0,h_*)$ such that $h\big(\mu_*(h_\star),h_\star)=h_\star$.
\end{lem}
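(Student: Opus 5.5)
We consider the scalar function
$$
G(h_0):=h\big(\mu_*(h_0),h_0\big)-h_0\,,\qquad h_0\in[0,h_*]\,,
$$
which is of class ${\rm C}^1$ by Lemma~\ref{L21} and Lemma~\ref{L40}. The plan is to show that $G(0)>0$ and $G(h_*)<0$, which gives existence of a zero in $(0,h_*)$, and then that $G$ is strictly decreasing at every zero, which gives uniqueness.

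\emph{Endpoint signs.} At $h_0=0$ we have $\mu_*(0)\in(0,\underline\mu(0)]\subset(0,S^*)$. Since $\mu\mapsto h(\mu,0)$ is strictly decreasing with $h(S^*,0)=0$, we get $h(\mu_*(0),0)>0$, so $G(0)>0$. At $h_0=h_*$ we have $\mu_*(h_*)>0$, and $h(0,h_*)=h_*$ with strict decrease in $\mu$, so $h(\mu_*(h_*),h_*)<h_*$ and $G(h_*)<0$. The intermediate value theorem then yields some $h_\star\in(0,h_*)$ with $G(h_\star)=0$.

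\emph{Uniqueness.} This is the main obstacle, because $h_0\mapsto h(\mu,h_0)$ is increasing (Lemma~\ref{L21}), so $G$ carries competing contributions. We have
$$
G'(h_0)=\partial_\mu h\,\mu_*'(h_0)+\partial_{h_0}h-1\,.
$$
Differentiating $B(\mu_*(h_0),h_0)=0$ gives $\mu_*'=-\partial_{h_0}B/\partial_\mu B$, with $\partial_\mu B>0$ by~\eqref{Bq}. Write $\mathcal{N}(h):=bh+\psi(h)d(h)h$, so that $B=k_1\kappa_1\partial_z{\rm c}(h(\mu,h_0),\mu,h_0)-\mathcal{N}(h(\mu,h_0))$. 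Computing $\partial_\mu B$ and $\partial_{h_0}B$ by the chain rule, and using~\eqref{381}--\eqref{382} to express $\partial_\mu h$ and $\partial_{h_0}h$ via the common denominator $E:=\partial_z{\rm c}-S'(h)>0$, we obtain
$$
\partial_\mu h\,\partial_{h_0}B-\partial_{h_0}h\,\partial_\mu B
=-\frac{1}{E}\Big(\partial_{h_0}{\rm c}\cdot k_1\kappa_1\partial_z\partial_\mu{\rm c}-\partial_\mu{\rm c}\cdot k_1\kappa_1\partial_z\partial_{h_0}{\rm c}\Big)
=\frac{K}{E}\,,
$$
evaluated at $z=h(\mu,h_0)$. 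Here the terms involving $k_1\kappa_1\partial_z^2{\rm c}-\mathcal{N}'$ cancel, since they multiply $\partial_\mu h\,\partial_{h_0}h$ symmetrically. Hence
$$
G'(h_0)\,\partial_\mu B=\frac{K\big(h(\mu_*,h_0),\mu_*,h_0\big)}{E}-\partial_\mu B\le-\partial_\mu B<0
$$
by Lemma~\ref{L30}. In fact $G'<0$ on all of $[0,h_*]$, not only at zeros, so $G$ is strictly decreasing and its zero $h_\star$ is unique.

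The step requiring care is the bookkeeping in this Jacobian-type identity: confirming that all the $\mathcal{N}'$ and $\partial_z^2{\rm c}$ contributions indeed cancel in the combination $\partial_\mu h\,\partial_{h_0}B-\partial_{h_0}h\,\partial_\mu B$, and that the sign convention for $K$ matches Lemma~\ref{L30}. If the sign of $K$ came out reversed, I would recheck against~\eqref{382}. That relation shows that $\partial_{h_0}h$ carries $-\partial_{h_0}{\rm c}$, which combined with $\partial_\mu h\propto-\partial_\mu{\rm c}$ produces exactly the antisymmetric combination defining $K$.
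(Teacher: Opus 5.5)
Your proof is the paper's argument essentially step by step. You use the same function $h(\mu_*(h_0),h_0)-h_0$ and the same endpoint signs from Lemma~\ref{L21}. You then cancel the $k_1\kappa_1\partial_z^2{\rm c}-\mathcal{N}'$ terms, substitute via \eqref{381}--\eqref{382}, and invoke Lemma~\ref{L30}, which yields $G'\le -1$ exactly as in the paper.

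The bookkeeping you flagged does contain a sign slip. After the cancellation one gets
\[
\partial_\mu h\,\partial_{h_0}B-\partial_{h_0}h\,\partial_\mu B
=k_1\kappa_1\bigl(\partial_\mu h\,\partial_z\partial_{h_0}{\rm c}-\partial_{h_0}h\,\partial_z\partial_\mu{\rm c}\bigr)
=-\frac{1}{E}\bigl(\partial_\mu{\rm c}\cdot k_1\kappa_1\partial_z\partial_{h_0}{\rm c}-\partial_{h_0}{\rm c}\cdot k_1\kappa_1\partial_z\partial_\mu{\rm c}\bigr)=-\frac{K}{E}\,.
\]
Your parenthesis has its two terms in the reverse order, so your display claims $+K/E$.

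Your next step contains a second slip. Since $G'\partial_\mu B=-\bigl(\partial_\mu h\,\partial_{h_0}B-\partial_{h_0}h\,\partial_\mu B\bigr)-\partial_\mu B$, your identity as written would give $-K/E-\partial_\mu B$, which carries no sign information.

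The two slips cancel, so your final formula $G'\partial_\mu B=K/E-\partial_\mu B\le-\partial_\mu B$ is correct; it is precisely the paper's $E\,\partial_\mu B\,(D'+1)=K$. You should still correct the intermediate display so that the final formula actually follows from it.
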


\begin{proof}
Set
$$
D(h_0):=h\big(\mu_*(h_0),h_0)-h_0\,,\quad h_0\in [0,h_*]\,,
$$
and note from Lemma~\ref{L21} that
$$
D(0)=h(\mu_*(0),0)>0\,,\qquad  D(h_*)=h(\mu_*(h_*),h_*)-h_*<0\,.
$$
Therefore, the assertion follows, provided that we can show that $D'(h_0)<0$ for $h_0\in[0,h_*]$, where
\begin{equation}\label{D}
D'(h_0)=\partial_\mu h(\mu_*(h_0),h_0)\mu_*'(h_0)+\partial_{h_0}h(\mu_*(h_0),h_0)-1\,.
\end{equation}
Differentiating $B(\mu_*(h_0),h_0)=0$ with respect to $h_0$ yields
\begin{equation}\label{D1}
\partial_\mu B(\mu_*(h_0),h_0)\mu_*'(h_0)=-\partial_{h_0}B(\mu_*(h_0),h_0)\,.
\end{equation}
Writing $B$ from~\eqref{400} in the form 
$$
B(\mu,h_0)=k_1\kappa_1\partial_z{\rm c}(h(\mu,h_0),\mu,h_0)-\phi(h(\mu,h_0)) 
$$
with
$$
\phi(h):=bh+\frac{k_2-k_1{\rm r}(S(h))}{\Delta(S(h))}d(h)h\,,
$$
it follows that
\begin{equation}\label{D2}
\begin{split}
\partial_\mu B(\mu,h_0)&=k_1 R\big({\rm c}(h(\mu,h_0),\mu,h_0),h_0\big) \partial_\mu h(\mu,h_0)\\
&\quad +
k_1\kappa_1 \partial_\mu\partial_z {\rm c}(h(\mu,h_0),\mu,h_0) - \phi'\big(h(\mu,h_0)\big)\partial_\mu h(\mu,h_0)
\end{split}
\end{equation}
and
\begin{equation}\label{D3}
\begin{split}
\partial_{h_0} B(\mu,h_0)&=k_1 R\big({\rm c}(h(\mu,h_0),\mu,h_0),h_0\big) \partial_{h_0} h(\mu,h_0)\\
&\quad +
k_1\kappa_1 \partial_{h_0}\partial_z {\rm c}(h(\mu,h_0),\mu,h_0) - \phi'\big(h(\mu,h_0)\big)\partial_{h_0} h(\mu,h_0)\,.
\end{split}
\end{equation}
Now, we infer from~\eqref{D} and~\eqref{D1} that
\begin{align*}
\partial_{\mu}B(\mu_*(h_0),h_0)\big(D'(h_0)+1\big)&=-\partial_{h_0}B(\mu_*(h_0),h_0) \partial_{\mu}h\big(\mu_*(h_0),h_0)\\
&\quad +\partial_{\mu}B(\mu_*(h_0),h_0) \partial_{h_0} h(\mu_*(h_0),h_0) 
\end{align*}
and thus, taking into account~\eqref{D2} and~\eqref{D3}, we obtain
\begin{align*}
\partial_{\mu}B(\mu_*(h_0),h_0)\big(D'(h_0)+1\big)&=-k_1\kappa_1 \partial_{h_0}\partial_z {\rm c}\big(h(\mu_*(h_0),h_0),\mu_*(h_0),h_0\big) \partial_{\mu} h\big(\mu_*(h_0),h_0\big)\\
&\quad +k_1\kappa_1 \partial_{\mu}\partial_z {\rm c}\big(h(\mu_*(h_0),h_0),\mu_*(h_0),h_0\big) \partial_{h_0} h\big(\mu_*(h_0),h_0\big)\,.
\end{align*}
Therefore, invoking~\eqref{381}-\eqref{382} we derive
\begin{align*}
\big[\partial_z{\rm c}&(h(\mu_*(h_0),h_0),\mu_*(h_0),h_0)-S'(h(\mu_*(h_0),h_0)) \big]\partial_{\mu}B(\mu_*(h_0),h_0)\big(D'(h_0)+1\big)\\
&=k_1\kappa_1 \partial_{h_0}\partial_z {\rm c}\big(h(\mu_*(h_0),h_0),\mu_*(h_0),h_0\big) \partial_{\mu} {\rm c}\big(h(\mu_*(h_0),h_0),\mu_*(h_0),h_0\big)\\
&\quad -k_1\kappa_1 \partial_{\mu}\partial_z {\rm c}\big(h(\mu_*(h_0),h_0),\mu_*(h_0),h_0\big) \partial_{h_0} {\rm c}\big(h(\mu_*(h_0),h_0),\mu_*(h_0),h_0\big)\\
&=K\big(h(\mu_*(h_0),h_0),\mu_*(h_0),h_0\big)\,.
\end{align*}
Consequently, since $K$ is negative according to Lemma~\ref{L30}, we conclude from~\eqref{poss} and~\eqref{Bq} that $D'(h_0)\le -1$ for $h_0\in [0,h_*]$.
\end{proof}

Gathering the previous observations, we are in a position to prove Theorem~\ref{T5}.

\begin{proof}[Proof of Theorem~\ref{T5}]
Let $h_\star\in(0,h_*)$ be the value determined in Lemma~\ref{P30} such that $h(\mu_*(h_\star),h_\star)=h_\star$, and set
$$
S_{\star,1}:=S(h_\star)\in(0,S^*)\,,\quad Q_\star:=\frac{\beta d(h_\star) h_\star}{\Delta(S_{\star,1})} > 0\,, \quad c_{\star,1}:={\rm c}(\cdot,\mu_*(h_\star),h_\star)\in {\rm C}^2([0,h_\star])\,,
$$
with $S=S(h)$ from Lemma~\ref{L20} and ${\rm c}={\rm c}(\cdot,\mu,h_0)$ from Lemma~\ref{L17}. Lemma~\ref{L20}, Lemma~\ref{L17}, Lemma~\ref{L40}, and Lemma~\ref{P30} then imply that $(h_\star,S_{\star,1},Q_\star,c_{\star,1})$ satisfies~\eqref{A}, and by construction this is the only nontrivial solution. Consequently, setting
$$
S_{\star,2}:=S^*-S_{\star,1}\,,\qquad \kappa_2c_{\star,2}(z):=(\kappa_1-\kappa_2)S_{\star,1}+\kappa_2 S^*-\kappa_1\,c_{\star,1}(z)\,,\quad z\in[0,h_\star]\,,
$$
the tuple $(h_\star,S_{\star,1},S_{\star,2},Q_\star,c_{\star,1},c_{\star,2})$ is the unique nontrivial equilibrium of~\eqref{C}.
\end{proof}

In the sequel, we focus on the stability properties of the persistence equilibrium established in this section.

\section{Local Stability of the Persistence Equilibrium}\label{Sec5} 

Under a slightly stronger assumption than in Theorem~\ref{T5}, we prove the local stability of the persistence equilibrium using the linearization principle:

\begin{thm}\label{T6}
Assume~\eqref{GAshooting}, \eqref{ZZ}, and let
\begin{equation}\label{d1}
\min\{2\alpha,k_2\}\ge k_1r(S^*,0)\,\,.
\end{equation}
Then the persistence equilibrium $(h_\star,S_{\star,1},S_{\star,2},Q_\star)$ provided by Theorem~\ref{T5} is locally asymptotically stable.
\end{thm}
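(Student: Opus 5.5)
We apply the principle of linearized stability to the Cauchy problem \eqref{CP} at $X_\star=(h_\star,S_{\star,1},S_{\star,2},Q_\star)$. Since $f\in{\rm C}^1$ by Proposition~\ref{P1}, it suffices to show that every eigenvalue of $\partial f(X_\star)$ has negative real part.

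The first step is to reduce the dimension. The combination $\Sigma:=S_1+S_2$ satisfies $\Sigma'=D(S^*-\Sigma)$ exactly. In the variables $(h,S_1,\Sigma,Q)$ the Jacobian is therefore block triangular: it has the eigenvalue $-D$, and the remaining block is a $3\times3$ matrix $J$ acting on $(h,S_1,Q)$ with $S_2=S^*-S_1$ frozen.

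Next we write out the entries of $J$. Set
$$
G(h,S):=h\int_0^1 r\big(u[h,S,S^*-S]\big)\,\rd y\,,\qquad G_h:=\partial_hG\,,\qquad G_S:=\partial_SG\,.
$$
Proposition~\ref{P1} gives $\partial_{S}\int_0^1 r\ge0$ via \eqref{G}, since the $S_1$-derivative minus the $S_2$-derivative is nonnegative. Hence $G_S\ge0$. From \eqref{GR} we get $G_h\le \int_0^1 r$, that is, $hG_h\le G$.

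Introduce the shorthands $\rho:={\rm r}(S_{\star,1})$, $\rho':={\rm r}'(S_{\star,1})\ge0$, and $\delta:=(d(h)h)'|_{h_\star}$.

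The equilibrium relations are
$$
k_1G-bh_\star+\tfrac{\alpha}{\beta}Q_\star-d(h_\star)h_\star=0\,,\qquad (k_1\rho-k_2-\alpha)Q_\star+\beta d(h_\star)h_\star=0\,.
$$
The second gives $q:=k_1\rho-k_2-\alpha=-\beta d(h_\star)h_\star/Q_\star<0$. The first gives $a_{11}:=k_1G_h-b-\delta\le k_1G/h_\star-b-d(h_\star)=-\alpha Q_\star/(\beta h_\star)<0$, where we used $hG_h\le G$ and $d'\ge0$. With these, the matrix is
$$
J=\begin{pmatrix} a_{11} & k_1G_S & \alpha/\beta\\ -\beta G_h & -D-\rho' Q_\star-\beta G_S & -\rho\\ \beta\delta & k_1\rho'Q_\star & q\end{pmatrix}.
$$

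We then check the Routh--Hurwitz conditions for $\det(\lambda-J)=\lambda^3+a_2\lambda^2+a_1\lambda+a_0$, namely $a_2>0$, $a_0>0$ and $a_2a_1>a_0$. The condition $a_2=-\mathrm{tr}J>0$ is immediate from the signs above.

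For $a_0=-\det J$, expand along the middle row. The $2\times2$ minor $a_{11}q-\alpha\delta$ is positive: from $a_{11}\le-\alpha Q_\star/(\beta h_\star)$ and $q=-\beta d h_\star/Q_\star$ we get $a_{11}q\ge\alpha d(h_\star)$, and the deficit $\alpha d'(h_\star)h_\star$ should be absorbed using the strict slack $-b$ in $a_{11}$. The remaining cofactor terms involve $G_h$, $G_S$, $\rho'$ and $\rho$. Their signs are where \eqref{d1} enters: $k_1\rho\le k_2$ and $k_1\rho\le2\alpha$ should make the mixed term $\alpha k_1\rho'Q_\star/\beta-\rho(\ldots)$ have the correct sign. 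Here we also use that $G_h$ may be negative, which only helps, since $-\beta G_h$ then enters with a favourable sign. If direct sign-checking fails, we fall back on a positive diagonal similarity $\mathrm{diag}(\eta,1,1)$, in the spirit of the Lyapunov weight $\eta h+Q$ from Theorem~\ref{P2}, and show the result is diagonally dominant in columns. That would give stability via Gershgorin's theorem without computing $a_2a_1-a_0$.

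The main obstacle is this last step: controlling the sign of the off-diagonal couplings, in particular $-\rho$ in the $(2,3)$ entry and $k_1\rho'Q_\star$, so that either the Hurwitz inequality $a_2a_1>a_0$ holds or a weighted column dominance is achieved. The bound $2\alpha\ge k_1 r(S^*,0)\ge k_1\rho$ is presumably exactly what balances the $\alpha/\beta$ and $-\rho$ couplings against $|q|=\alpha+k_2-k_1\rho$. I would first try the Gershgorin route with weights $(\eta,1,1)$, choosing $\eta$ between $\beta$ and $\beta|q|/\alpha$ as in the proof of Theorem~\ref{P2}, and would carry out the lengthy expansion of $a_2a_1-a_0$ only if that fails.
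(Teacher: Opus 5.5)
\textbf{There is a genuine gap: the Routh--Hurwitz step is never carried out, and the proposed Gershgorin fallback fails in general.}

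Your setup matches the paper's. It uses linearized stability and splits off the eigenvalue $-D$; your $\Sigma=S_1+S_2$ block is the paper's $A'$ up to conjugation by $\mathrm{diag}(1,-1,1)$. It then applies the Routh--Hurwitz criterion. But the argument stops where the work begins: neither $a_0>0$ nor $a_2a_1>a_0$ is established, only asserted with ``should''.

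The paper (Appendix~\ref{App}) proceeds as follows:
\begin{itemize}
\item It eliminates $b$ and $Q_\star$ through the equilibrium identities $k_1R-b=d(h_\star)(\Delta-\alpha)/\Delta$ and $Q_\star=\beta d(h_\star)h_\star/\Delta$.
\item It writes your $G_h$ as $R+h_\star H$, with $R=\int_0^1 r(u_\star)\,\rd y$ and $H\le 0$.
\item It expands $m_0$, $m_2$ and $\Delta(m_1m_2-m_0)$ into sums of terms that are each nonnegative. This uses $H\le0$, $G_{12}=G_S/h_\star\ge0$, $r_{12}(S_\star)>0$, $d'\ge0$, $\Delta>\alpha$ and $2\alpha\ge k_1r(S_\star)$. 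The one term where $R$ has a negative coefficient, $[\Delta r(S_\star)-\alpha R]$, is handled by $R\le r(S_\star)$.
\end{itemize}
Splitting $G_h$ is essential, because $G_h$ itself has no sign. Your remark that a negative $G_h$ ``only helps'' therefore yields nothing.

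Several of your heuristics also point the wrong way:
\begin{itemize}
\item Once the equilibrium relation is used, there is no slack $-b$ left in $a_{11}=k_1h_\star H-\alpha d(h_\star)/\Delta-d'(h_\star)h_\star$.
\item The $(h,Q)$-minor is $a_{11}q-\alpha\delta=-\Delta k_1h_\star H+(\Delta-\alpha)d'(h_\star)h_\star\ge0$. It is nonnegative because $|q|=\Delta\ge\alpha$ by \eqref{Z1x}, not because of $b$.
\item The part $2\alpha\ge k_1r(S^*,0)$ of \eqref{d1} is not what makes $a_0$ positive. It enters only the Hurwitz determinant, through factors such as $[2\alpha-k_1r(S_\star)]$.
\end{itemize}

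The Gershgorin fallback cannot replace this computation. With $P=\mathrm{diag}(\eta,1,1)$, negativity of the first and third column sums of $PJP^{-1}$ means $\beta(|G_h|+\delta)<\eta|a_{11}|$ and $\eta\alpha/\beta<\Delta-\rho$. Eliminating $\eta$ gives the $\eta$-independent necessary condition
\begin{equation*}
\alpha|G_h|+\rho|a_{11}|<a_{11}q-\alpha\delta=-\Delta k_1h_\star H+(\Delta-\alpha)d'(h_\star)h_\star\,.
\end{equation*}
The left side is at least $\rho\,\alpha d(h_\star)/\Delta>0$. For constant $d$, which \eqref{d} allows, the right side is $\Delta k_1h_\star|H|$. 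This is small for thin or fast-diffusing films (indeed $h_\star|H|=O(h_\star^2/\kappa_1)$). So no choice of $\eta$ works in such admissible regimes.

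The obstruction is structural. At the persistence equilibrium, the $(h,Q)$-subsystem with frozen substrate is only weakly stable: its determinant is exactly that possibly tiny minor. This differs from the washout state, where $\eta h+Q$ works. Stability here comes from the \emph{sign} of the feedback loop through $S_1$, and any estimate using only $|J_{ij}|$ discards that sign. The Routh--Hurwitz verification is therefore not an optional last step; it is the proof.
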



\begin{proof}
We argue by the principle of linearized stability. Recall that Theorem~\ref{T5} entails
$$
S_{\star}=(S_{\star,1},S_{\star,2})=(S_{\star,1},S^*-S_{\star,1})\in (0,S^*)^2\,,
$$
and the monotonicity of $r$ together with~\eqref{d1} yields
\begin{equation}\label{Delta1}
\Delta:=\alpha+k_2-k_1r(S_{\star,1},S_{\star,2})>\alpha+k_2-k_1r(S^*,0)\ge\alpha>0\,.
\end{equation}
We write $r(S_{\star})=r(S_{\star,1},S_{\star,2})$ for brevity. The equilibrium relation~\eqref{C5} gives
\begin{equation}\label{Q}
Q_\star=\frac{\beta d(h_\star)h_\star}{\Delta}\,,
\end{equation}
and dividing~\eqref{C3} at the equilibrium by $h_\star$ and inserting~\eqref{Q} yields
\begin{equation}\label{b1}
 k_1\int_0^1 r(u_\star(y)) \rd y-b=d(h_\star)-\frac{\alpha d(h_\star)}{\Delta}\,.
\end{equation}
As in~\eqref{CP}, consider~\eqref{UU} in the form $X'=f(X)$ for $X=(h,S_1,S_2,Q)\in\R^4$. Recalling from Proposition~\ref{P1} that the mapping $[h,S]\mapsto u[h,S]$ is of class $C^1$, setting $u_\star=u[h_\star,S_\star]$, and using~\eqref{b1}, we compute the Jacobian $A:=\partial f( X_\star)$ of $f$ at  $X_\star=(h_\star,S_{\star,1},S_{\star,2},Q_\star)$  as
{\small\begin{equation*} 
\left(\begin{matrix}
   k_1 h_\star   H -\frac{\alpha  d(h_\star)}{\Delta}-d'(h_\star)  h_\star  &  k_1  h_\star   G_1 &  k_1  h_\star   G_2 & \frac{\alpha}{\beta}\\[4pt]
-\beta R-\beta h_\star H  &-D-r_1(S_\star)Q_\star-\beta h_\star  G_1  &-r_2(S_\star)Q_\star-\beta h_\star  G_2  & -r(S_\star)\\[4pt]
\beta R+\beta h_\star H  & r_1(S_\star)Q_\star+\beta h_\star  G_1 & -D+r_2(S_\star)Q_\star+\beta h_\star  G_2 & r(S_\star)\\[4pt]
\beta d(h_\star)+\beta d'(h_\star)h_\star  & k_1 r_1(S_\star) Q_\star &  k_1 r_2(S_\star) Q_\star & -\Delta
\end{matrix}\right)\,,
\end{equation*}}
where we set $r_j:=\partial_j r$ and
\begin{subequations}\label{Z}
\begin{equation}\label{GH}
H:=\frac{\partial}{\partial h} \int_0^1 r(u_\star(y)) \rd y= \int_0^1 \Big\{r_1(u_\star(y))\partial_h u_{\star,1}(y)+r_2(u_\star(y))\partial_h u_{\star,2}(y)\Big\} \rd y\le 0\,,
\end{equation}
\begin{equation*}
 G_i:=\frac{\partial}{\partial {S_i}}\int_0^1 r(u_\star(y)) \rd y= \int_0^1 \Big\{r_1(u_\star(y))\partial_{S_i} u_{\star,1}(y)+r_2(u_\star(y))\partial_{S_i} u_{\star,2}(y)\Big\} \rd y
\end{equation*}
with
\begin{equation}\label{GH1}
G_{12}:=G_1-G_2\ge 0
\end{equation}
and
\begin{equation}\label{GH3}
R:=\int_0^1 r(u_\star(y)) \rd y\in [0,r(S_\star)]
\end{equation}
according to Proposition~\ref{P1}. Since adding multiples of a row or column to another does not alter the determinant of a matrix, it readily follows that the eigenvalues of $A$ coincide with the eigenvalues of the matrix 
{\small\begin{equation*}
\left(\begin{matrix}
 k_1 h_\star   H -\frac{\alpha  d(h_\star)}{\Delta}-d'(h_\star)  h_\star  &  k_1  h_\star   G_1 & - k_1  h_\star   G_{12} & \frac{\alpha}{\beta}\\[4pt]
0  &-D &0 & 0\\[4pt]
\beta R+\beta h_\star H  & r_1(S_\star)Q_\star+\beta h_\star  G_1 & -D-r_{12}(S_\star) Q_\star-\beta h_\star  G_{12} &\quad r(S_\star)\\[4pt]
\beta d(h_\star)+\beta d'(h_\star)h_\star  & k_1 r_1(S_\star) Q_\star &  -k_1 r_{12}(S_\star) Q_\star & -\Delta
\end{matrix}\right)\,,
\end{equation*}}
where (see~\eqref{r2})
\begin{equation}\label{GH4}
r_{12}(S_\star):=r_{1}(S_\star)-r_{2}(S_\star)> 0\,.
\end{equation}
\end{subequations}
Finally, using Laplace expansion along the second row entails that the eigenvalues of the linearization are given by $-D<0$ and the eigenvalues of the $(3\times 3)$-matrix 
\begin{equation}\label{MA}
A':=\left(\begin{matrix}
  k_1 h_\star   H -\frac{\alpha  d(h_\star)}{\Delta}-d'(h_\star)  h_\star   & - k_1  h_\star   G_{12} & \frac{\alpha}{\beta}\\[4pt]
\beta R+\beta h_\star H   & -D-r_{12}(S_\star) Q_\star-\beta h_\star  G_{12} &\quad r(S_\star)\\[4pt]
\beta d(h_\star)+\beta d'(h_\star)h_\star   &  -k_1 r_{12}(S_\star) Q_\star & -\Delta
\end{matrix}\right)\,.
\end{equation}
It thus remains to show that all eigenvalues of $A'$ have negative real parts under assumptions~\eqref{GAshooting}, \eqref{ZZ}, and \eqref{d1}. To this end, we apply the Routh–Hurwitz criterion. The technical details are postponed to Appendix~\ref{App}. The principle of linearized stability the implies that the nontrivial equilibrium $(h_\star,S_{\star,1},S_{\star,2},Q_\star)$ is locally asymptotically stable.
\end{proof}

The next objective is to prove the global stability of the persistence equilibrium. For this purpose, we first establish the uniform persistence of solutions in the next section.

\section{Uniform Persistence}\label{sec:6}

In this section, we show that each component of a solution $(h,S_1,S_2,Q)$ persists when the nontrivial equilibrium exists and whenever $(h^0,Q^0)\not=(0,0)$. More precisely:

\begin{thm}\label{persistence}
Assume~\eqref{GAshooting} and~\eqref{ZZ}. Then there exist $\underline{h}, \underline{S}_1, \underline{S}_2,\underline{Q}>0$, independent of the initial value, such that any solution $(h,S_1,S_2,Q)$ to~\eqref{C} with $(h^0,Q^0)\not=(0,0)$ satisfies
\begin{equation}\label{persistX}
\liminf_{t\to\infty} \big(h(t),S_1(t),S_2(t),Q(t)\big)\ge \big(\underline{h},\underline{S}_1,\underline{S}_2,\underline{Q}\big)\,.
\end{equation}
\end{thm}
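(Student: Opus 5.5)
We will use the abstract persistence theory for semiflows (Hale--Waltman, Smith--Thieme). Let $X:=\R_+^4$ with boundary set $X_0:=\{(h,S_1,S_2,Q)\in X:\ h=Q=0\}$ and $X^0:=X\setminus X_0$. First we check that $X^0$ is forward invariant. If $(h^0,Q^0)\neq(0,0)$, then $h(t)+Q(t)>0$ for all $t>0$. Indeed, by \eqref{C3x} and \eqref{C5x} the pair $(h,Q)$ satisfies a cooperative linear system $h'=a(t)h+\frac{\alpha}{\beta}Q$, $Q'=\beta d(h)h+e(t)Q$ with bounded coefficients. Hence $h^0>0$ gives $h(t)>0$. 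Then $d(h(t))>0$ gives $Q(t)>0$ for $t>0$, and symmetrically $Q^0>0$ gives $h(t)>0$. So both components become positive instantly. The set $X_0$ is also invariant. On it, \eqref{29c} and \eqref{C4bx} give $S_2'=-DS_2$, so every orbit in $X_0$ converges to the washout equilibrium $E_0=(0,S^*,0,0)$. Therefore $\{E_0\}$ is an isolated, acyclic covering of the boundary $\omega$-limit sets. Theorem~\ref{T1} provides the compact attractor, i.e.\ point dissipativity and asymptotic smoothness.

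The main step is to show that $W^s(E_0)\cap X^0=\emptyset$. Suppose a solution with $(h^0,Q^0)\neq(0,0)$ satisfies $\mathrm{dist}(X(t),E_0)<\varepsilon$ for all $t\ge T$. Then $S_1\ge S^*-\varepsilon$, $S_2\le\varepsilon$ and $h\le\varepsilon$. By \eqref{ccc1x}, \eqref{ccc3x} and the monotonicity \eqref{r2}, we have $u_1\ge S_1-\frac{\kappa_2}{\kappa_1}\cdot$ (a small quantity). A cleaner route uses continuity of $u[h,S_1,S_2]$ from Proposition~\ref{P1} together with $u[0,S^*,0]=(S^*,0)$: for small $\varepsilon$, both $\int_0^1 r(u)\,\rd y$ and $r(S_1,S_2)$ are within $\delta$ of $r(S^*,0)$, and $d(h)\le d(0)+\delta$. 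This gives the comparison system
\[
h'\ge (k_1r(S^*,0)-b-d(0)-c\delta)h+\tfrac{\alpha}{\beta}Q,\qquad Q'\ge (k_1r(S^*,0)-k_2-\alpha-c\delta)Q+\beta d(0)h,
\]
with off-diagonal terms $\alpha/\beta>0$ and $\beta d(0)$. If $d(0)>0$, the matrix is irreducible and cooperative. Its Perron eigenvalue is $\lambda_+$ from \eqref{lambda}, perturbed by $O(\delta)$. Assumption \eqref{ZZ} implies \eqref{instab}, so $\lambda_+>0$. Testing with a positive left eigenvector $\xi$ gives $(\xi\cdot(h,Q))'\ge(\lambda_+-c\delta)\,\xi\cdot(h,Q)$. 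Since $\xi\cdot(h,Q)(T)>0$, this quantity grows without bound, contradicting boundedness.

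The case $d(0)=0$ is the one I expect to be the obstacle. There the coupling from $h$ to $Q$ degenerates, so the Perron argument must use the triangular structure. Under \eqref{Z1x}, $k_1r(S^*,0)\le k_2$, which forces $p>0$ by \eqref{h2eff}. Then $h'\ge(p-c\delta)h$ blows up whenever $h>0$, and $h(t)>0$ for $t>0$ holds anyway. With $E_0$ thus weakly repelling in $X^0$, the acyclicity theorem (e.g.\ Smith--Thieme, Theorem~4.6 / Hale--Waltman) yields uniform weak and then uniform strong persistence. This gives some $\eta>0$ with $\liminf\mathrm{dist}(X(t),X_0)\ge\eta$, i.e.\ $\liminf(h+Q)\ge\eta$ componentwise via the semiflow's compact attractor of $X^0$.

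It remains to upgrade this to componentwise bounds. Uniform persistence gives a compact attractor $\mathcal{A}_0\subset X^0$ attracting all orbits of $X^0$. On $\mathcal{A}_0$, the invariance and the positivity argument above give $h>0$ and $Q>0$. Also $S_1>0$, since $S_1'=DS^*>0$ when $S_1=0$, and $S_2>0$, since $S_2'\ge\beta h\int r(u)>0$ when $S_2=0$ and $h,S_1>0$. By compactness, the minima of the four coordinates over $\mathcal{A}_0$ are positive. Taking $\underline h,\underline S_1,\underline S_2,\underline Q$ as half these minima yields \eqref{persistX}.
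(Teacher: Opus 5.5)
Your argument is correct, but it is organized differently from the paper's. The paper uses the scalar persistence function $\rho=h$ with the boundary set $\{h(t)=0\ \text{for all } t\ge0\}$ and treats $h^0>0$ and $Q^0>0$ as two separate cases, the second by swapping the roles of $h$ and $Q$. It then derives the remaining bounds one at a time from explicit differential inequalities:
\begin{itemize}
\item $Q'\ge-(k_2+\alpha)Q+\beta d(\underline h-\varepsilon)(\underline h-\varepsilon)$;
\item $S_1'\ge D(S^*-S_1)-\bar c\,r(S_1,0)$, using the dissipativity bound on $\beta h+Q$;
\item $S_2'\ge-DS_2+r(\underline S_1-\varepsilon,S^*+\varepsilon)(\underline Q-\varepsilon)$.
\end{itemize}
You instead take $X_0=\{h=Q=0\}$ as the boundary. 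This handles both cases at once, thanks to your observation that $h$ and $Q$ become positive instantaneously. You then obtain all four componentwise bounds in one step from the compact attractor $\mathcal{A}_0\subset X^0$, using the fact that every coordinate is strictly positive on complete orbits lying in $X^0$ (your backward-in-time arguments). Your route is more structural and avoids the case split, but its constants are purely existential. The paper's route gives explicit lower bounds for $\underline Q,\underline S_1,\underline S_2$ in terms of $\underline h$ and the parameters. The weak-repeller step is the same in both: a positive left eigenvector of the $(h,Q)$-block of \eqref{Jac}.

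Two small remarks.

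First, your separate treatment of $d(0)=0$ is unnecessary. The paper's vector $\big(\beta(\lambda_++\Delta(S^*))/\alpha,\,1\big)$ is a left eigenvector for $\lambda_+$ and is strictly positive for every $d(0)\ge0$. This is because $-\Delta(S^*)=q<0<\lambda_+$ by \eqref{Z1x}.

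Second, the fact that every orbit in $X_0$ converges to $E_0$ does not by itself make $\{E_0\}$ an isolated invariant set in $X$, which Hale--Waltman requires. You should add that $E_0$ is hyperbolic. Indeed, $\lambda_+\lambda_-=pq-\alpha d(0)<0$ under \eqref{h2eff}, so the eigenvalues are $-D,-D,\lambda_-<0<\lambda_+$.
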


 In the following, we assume~\eqref{GAshooting} and~\eqref{ZZ} and consider the solution $(h,S_1,S_2,Q)$ to~\eqref{C}  corresponding to an initial value in $\R_+^4$ with $(h^0,Q^0)\not=(0,0)$. We divide the proof of Theorem~\ref{persistence} into two parts. First, we establish the uniform persistence of the biofilm, proving that the thickness $h$ and the biomass $Q$ remain bounded away from zero.

\begin{lem}\label{Tpersist}
There exist $\underline{h}>0$ and $\underline{Q}>0$, independent of the initial value, such that any solution to~\eqref{C} with $(h^0,Q^0)\not=(0,0)$ satisfies
\begin{equation}\label{persist}
\liminf_{t\to\infty} h(t)\ge\underline{h}\qquad\text{and}\qquad \liminf_{t\to\infty} Q(t)\ge\underline{Q}\,.
\end{equation}
\end{lem}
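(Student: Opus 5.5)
We plan to apply the abstract uniform persistence theory for semiflows \cite{HaleWaltman1989,SmithThieme2011}. The phase space is $X:=\R_+^4$. By Theorem~\ref{T1} the semiflow is point dissipative and admits a compact global attractor. We split $X$ into
$$
X_0:=\{(h,S_1,S_2,Q)\in X:\ (h,Q)\neq(0,0)\}\,,\qquad \partial X_0:=X\setminus X_0\,.
$$

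The first step is to check forward invariance of $X_0$. From \eqref{C3x} and \eqref{C5x}, if $h(t_0)=Q(t_0)=0$ then $(h,Q)\equiv(0,0)$ by uniqueness, since $\{h=Q=0\}$ is invariant. Hence $(h^0,Q^0)\neq(0,0)$ forces $(h(t),Q(t))\neq(0,0)$ for all $t\ge0$. In fact, if $Q^0>0$ then $h'\ge(\alpha/\beta)Q-\ldots$ makes $h(t)>0$ for $t>0$, and symmetrically $Q>0$ once $h>0$, because $\beta d(h)h>0$. So $h(t),Q(t)>0$ for all $t>0$.

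The second step is to identify the boundary dynamics. On $\partial X_0$ we have $h=Q=0$, the system reduces to linear $S$-equations, and every orbit converges to $E_0:=(0,S^*,0,0)$. Thus $\{E_0\}$ is an isolated, acyclic covering of $\bigcup_{x\in\partial X_0}\omega(x)$.

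The main step, and the main obstacle, is to show that $W^s(E_0)\cap X_0=\emptyset$, i.e.\ no solution with $(h^0,Q^0)\neq(0,0)$ converges to $E_0$. Suppose it does. Then $S_1\to S^*$ and $S_2\to0$, and $h\to0$, so by continuity of $u$ (Proposition~\ref{P1} and \eqref{I2}) we get $\int_0^1 r(u)\,\rd y\to r(S^*,0)$. For large $t$ the pair $(h,Q)$ then satisfies the cooperative linear differential inequality
$$
\begin{pmatrix}h\\ Q\end{pmatrix}'\ \ge\ \begin{pmatrix} p-\varepsilon & \alpha/\beta\\ \beta d(0)-\varepsilon & q-\varepsilon\end{pmatrix}\begin{pmatrix}h\\ Q\end{pmatrix},
$$
with $p,q$ as in \eqref{lambda}; here $d(h)h\ge d(0)h$ and $d(h)\le d(0)+\varepsilon$ are used. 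The matrix is Metzler and irreducible, and by \eqref{ZZ} (which implies \eqref{instab}) its Perron eigenvalue is positive for small $\varepsilon$. Take a positive left eigenvector $\ell$ and set $V:=\ell\cdot(h,Q)$. Then $V'\ge\lambda V$ with $\lambda>0$, and $V>0$ by the first step, so $V\to\infty$, contradicting $(h,Q)\to0$. To get a uniform statement one uses instead the weak repeller form $\limsup_{t\to\infty}\operatorname{dist}(X(t),E_0)\ge\delta$ with $\delta$ independent of the orbit. This is the same argument with $\varepsilon$ fixed by $\delta$, assuming $X(t)$ stays in a $\delta$-neighbourhood for $t\ge T$.

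The conclusion then follows from \cite[Theorem 4.1]{HaleWaltman1989}, or the corresponding result in \cite{SmithThieme2011}. The semiflow is uniformly persistent with respect to $(X_0,\partial X_0)$, so there is $\eta>0$ with $\liminf_{t\to\infty}\operatorname{dist}(X(t),\partial X_0)\ge\eta$; that is, $\liminf_{t\to\infty}\max\{h,Q\}\ge\eta$. To upgrade this to separate bounds we use the compact attractor $\mathcal A_0$ of the semiflow restricted to $X_0$, which lies at distance at least $\eta$ from $\partial X_0$. On any complete bounded orbit in $\mathcal A_0$, $h$ and $Q$ are positive at all times by the first step. Compactness of $\mathcal A_0$ and continuity give positive minima $\underline h:=\min_{\mathcal A_0}h>0$ and $\underline Q:=\min_{\mathcal A_0}Q>0$. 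Alternatively, one can argue directly: $h'\ge-(b+d(\bar h))h+(\alpha/\beta)Q$ and $Q'\ge-(k_2+\alpha)Q+\beta d(h)h$ with the a priori bound \eqref{Wbound}. Since $\omega$-limit sets lie in $\mathcal A_0$, \eqref{persist} follows.
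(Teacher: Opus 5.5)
Your proposal is correct and follows essentially the paper's route. The washout state is the only boundary $\omega$-limit, and it is a uniform weak repeller because of the positive left-eigenvector functional of the linearized $(h,Q)$-block, which is exactly the paper's $\frac{\beta(\lambda_++\Delta(S^*))}{\alpha}h+Q$. Your choice $X_0=\{(h,Q)\neq(0,0)\}$, followed by the upgrade from $\liminf_{t\to\infty}\max\{h,Q\}\ge\eta$ to separate bounds, handles $h^0>0$ and $Q^0>0$ together, whereas the paper uses $\rho=h$ and a symmetry remark.

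One slip: \eqref{d} allows $d(0)=0$. Write the $(2,1)$ entry as $\beta d(0)$, which is exactly what $d(h)h\ge d(0)h$ gives, instead of $\beta d(0)-\varepsilon$, and drop the irreducibility claim. The explicit left eigenvector $\big(\beta(\lambda_+-q)/\alpha,\,1\big)$ then does the job, since it is positive because $\lambda_+-q=\lambda_++\Delta(S^*)>0$.
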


\begin{proof}
Let $\mathcal{A}$ be the 
compact attractor from Theorem~\ref{T1}.

{\bf (i)} We consider first the case of $h^0>0$ and argue by uniform persistence theory from \cite[Section~8.3]{SmithThieme2011}  (see also~\cite{HaleWaltman1989})  with persistence function $$\rho:\R_+^4\to \R\,,\quad (h,S_1,S_2,Q)\mapsto h\,.$$
Note that any complete orbit bounded on $\R$ and contained in $\R_+^4\cap\{h=0\}$ satisfies $h\equiv h'\equiv0$ and hence $Q\equiv0$ by~\eqref{C3x}, so that~\eqref{C4ax} and~\eqref{C4bx} reduce to
\begin{equation*}
S_1'=D(S^*-S_1)\,,\qquad S_2'=-DS_2\,,
\end{equation*}
whose only solutions bounded on $\R$ are $S_1\equiv S^*$ and $S_2\equiv0$. Thus, $$\mathcal{X}_0:=\{(h^0,S_1^0,S_2^0,Q^0)\in\R_+^4\,;\, h(t)=0 \text{ for all } t\ge 0\}$$
contains no homoclinic orbit and $\mathcal{A}\cap \mathcal{X}_0=\{(0,S^*,0,0)\}$, since the compact global attractor $\mathcal{A}$ consists of complete orbits bounded on $\R$. This implies that $(0,S^*,0,0)$ is isolated and acyclic (see \cite[Definition~8.14]{SmithThieme2011}).

Next, we infer from~\eqref{C3x} that 
$$
h'(t)\ge-\big(b+d(h(t))\big)h(t)\,,\quad t\ge 0\,,
$$ 
so that the set $\{h>0\}$ is forward invariant. We now claim that there exists $\delta>0$ such that every solution emanating from $\{h>0\}$ satisfies
\[
\limsup_{t\to\infty}\big|(h,S_1,S_2,Q)(t)-(0,S^*,0,0)\big|\ge\delta\,,
\]
so that the washout equilibrium is a uniform weak $\rho$-repeller. 
To this end, observe first that $\lambda_+>0$ in~\eqref{lambda} due to~\eqref{instab} and~\eqref{ZZ}, while~\eqref{Delta} and~\eqref{Z1x} entail that 
$$
\Delta(S^*)=\alpha+k_2-k_1r(S^*,0)\ge\alpha>0\,.
$$ 
Thus, whenever $h(0)=h^0>0$, we have
\[
\frac{\beta(\lambda_++\Delta(S^*))}{\alpha}h(t)+Q(t)>0\,,\quad t\ge0\,,
\]
and it follows from~\eqref{C3x},~\eqref{C5x}, and~\eqref{lambda} that
\begin{align*}
\frac{\rd}{\rd t}&\bigg(\frac{\beta(\lambda_++\Delta(S^*))}{\alpha}h+Q\bigg)(t)\\
&=\lambda_+\bigg(\frac{\beta(\lambda_++\Delta(S^*))}{\alpha}h+Q\bigg)(t)\\
&\quad +\left[k_1\left(\int_0^1 r\big(u_1(t,y),u_2(t,y)\big)\,\rd y-r(S^*,0)\right)-\big(d(h(t))-d(0)\big)\right]\frac{\beta(\lambda_++\Delta(S^*))}{\alpha}h(t)\\
&\quad+k_1\big(r(S_1(t),S_2(t))-r(S^*,0)\big)Q(t)+\beta\big(d(h(t))-d(0)\big)h(t)
\end{align*}
for $t\ge 0$.
Assume now for contradiction that the claim were false. Then, for every $\delta>0$ we find an initial value $h^0>0$ and $T_\delta>0$ such that the corresponding solution satisfies
\[
\big|(h,S_1,S_2,Q)(t)-(0,S^*,0,0)\big|\le \delta\,,\quad t\ge T_\delta\,.
\]
Since $u=(u_1,u_2)$ depends continuously on $(h,S_1,S_2)$ according to Proposition~\ref{P1} and $u[0,S^*,0]\equiv(S^*,0)$ by~\eqref{I2}, we may in fact choose $\delta>0$ small enough such that
\begin{align*}
\left\vert k_1\left(\int_0^1 r\big(u_1(t,y),u_2(t,y)\big)\,\rd y-r(S^*,0)\right)-\big(d(h(t))-d(0)\big)\right\vert\le \frac{\lambda_+}{2}\,,\quad t\ge T_\delta\,,
\end{align*}
and
\begin{align*}
\left\vert k_1\big(r(S_1(t),S_2(t))-r(S^*,0)\big)\right\vert\le \frac{\lambda_+}{2}\,,\quad t\ge T_\delta\,.
\end{align*}
Therefore, together with~\eqref{d}, we infer that
\begin{equation*}
\frac{\rd}{\rd t}\bigg(\frac{\beta(\lambda_++\Delta(S^*))}{\alpha}h+Q\bigg)(t)\ge\frac{\lambda_+}{2}\bigg(\frac{\beta(\lambda_++\Delta(S^*))}{\alpha}h+Q\bigg)(t)\,,\quad t\ge T_\delta\,.
\end{equation*}
Integration of this inequality yields a contradiction to the boundedness of the orbit.   Consequently, the washout equilibrium is indeed a uniform weak $\rho$-repeller.  
As it is also isolated and acyclic, we infer from \cite[Theorem~8.17]{SmithThieme2011} that the semiflow is uniformly weakly $\rho$-persistent.
In fact, since the set $\{h>0\}$ is forward invariant as observed above, it now readily follows from \cite[Theorem~4.5]{SmithThieme2011} that the semiflow is uniformly $\rho$-persistent.  That is, there exists $\underline{h}>0$, such that for every initial value $(h^0,S_1^0,S_2^0,Q^0)$ in $\R_+^4$ with  $h^0>0$, the corresponding solution  
satisfies
\[
\liminf_{t\to\infty}  h(t)\ge\underline{h}\,,
\]
which is the first bound in~\eqref{persist}.

Finally, given $\varepsilon\in(0,\underline{h})$ there is $T_\varepsilon>0$ such that $h(t)\ge\underline{h}-\varepsilon$ for $t\ge T_\varepsilon$. Since the map $x\mapsto d(x)x$ is nondecreasing by~\eqref{d}, we infer from~\eqref{C5x} that 
\begin{align*}
Q'(t)&\ge -(k_2+\alpha)Q(t)+\beta d(\underline{h}-\varepsilon)(\underline{h}-\varepsilon)\,,
\end{align*}
for $t\ge T_\varepsilon$. Therefore,
\begin{equation*}
\liminf_{t\to\infty} Q(t)\ge\frac{\beta d(\underline{h}-\varepsilon)(\underline{h}-\varepsilon)}{k_2+\alpha}>0\,,
\end{equation*}
where we use~\eqref{d} for the strict positivity.

{\bf(ii)} The case $Q^0>0$ follows in exactly the same way as above by interchanging the role of $h$ and $Q$ and noting from~\eqref{Wbound} that $\limsup_{t\to\infty}d(h(t))\le c_0$ for some universal constant $c_0>0$ independent of the initial value.
\end{proof}

Finally, we show that neither substrate $S_1$ nor substrate $S_2$ is depleted.

\begin{lem}
There are constants $\underline{S}_1,\underline{S}_2>0$,
independent of the initial value, such that every solution to~\eqref{C} with $(h^0,Q^0)\neq(0,0)$ satisfies
\begin{equation}\label{persistS}
\liminf_{t\to\infty}S_1(t)\ge\underline{S}_1\qquad\text{and}\qquad\liminf_{t\to\infty}S_2(t)\ge\underline{S}_2\,.
\end{equation}
\end{lem}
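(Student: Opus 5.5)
We will derive both bounds directly from the differential inequalities~\eqref{C4ax}--\eqref{C4bx}, the conservation law~\eqref{29c}, and Lemma~\ref{Tpersist}. We first treat $S_1$. By~\eqref{Wbound} there is a universal constant $C_0>0$ with $\limsup_{t\to\infty}(h(t)+Q(t))\le C_0$. Since $r\ge0$, \eqref{ccc1x} together with the monotonicity~\eqref{r2} gives
$$
0\le \int_0^1 r\big(u(t,y)\big)\,\rd y\le r\big(S_1(t),S_2(t)\big)\le r\big(S_1(t),0\big)\,.
$$
Because $r\in{\rm C}^1$ and $r(0,0)=0$, we have $r(x,0)\le Lx$ for $x\in[0,S^*+1]$ with $L:=\max_{[0,S^*+1]}\partial_1 r(\cdot,0)$. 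Hence, for $t$ large, \eqref{C4ax} yields
$$
S_1'\ge DS^*-\big(D+L(1+\beta)(C_0+1)\big)S_1\,,
$$
and a comparison argument gives $\liminf_{t\to\infty} S_1(t)\ge \underline S_1:=DS^*/\big(D+L(1+\beta)(C_0+1)\big)$. This constant does not depend on the initial value; it does not even use $(h^0,Q^0)\neq(0,0)$.

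For $S_2$ we use~\eqref{C4bx}, whose production term is $r(S_1,S_2)Q+\beta h\int_0^1 r(u)\,\rd y\ge r(S_1,S_2)Q$. Here Lemma~\ref{Tpersist} enters: for $t\ge T$ we have $Q(t)\ge \underline Q/2$ and $S_1(t)\ge \underline S_1/2$. Since the growth term involves $r(S_1,S_2)$, which decreases in $S_2$, we argue as follows. If $S_2(t)\le \varepsilon$, then $r(S_1,S_2)\ge r(\underline S_1/2,\varepsilon)\ge r(\underline S_1/2,\varepsilon_0)=:\rho>0$ for any $\varepsilon\le\varepsilon_0$, where $\varepsilon_0>0$ is fixed and $\rho>0$ by~\eqref{r2} (indeed $\partial_1 r>0$ and $r(0,y)=0$ give $r(x,y)>0$ for $x,y>0$). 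Consequently, on $\{S_2\le\varepsilon\}$ we get $S_2'\ge -D\varepsilon+\rho\underline Q/2$, which is positive once $\varepsilon<\min\{\varepsilon_0,\rho\underline Q/(2D)\}$. We choose $\underline S_2$ as such an $\varepsilon$. A standard argument then shows that $S_2$ eventually enters $[\underline S_2,\infty)$ and stays there. It enters because it cannot remain below $\underline S_2$ while strictly increasing at a uniform rate. It cannot leave afterwards because $S_2'>0$ at $S_2=\underline S_2$. This yields~\eqref{persistS}.

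The main obstacle is the $S_2$ bound. The estimate needs a positive lower bound on $r(S_1,S_2)$ near $S_2=0$, and this requires simultaneous positivity of $S_1$ and $Q$. The first step supplies the former and Lemma~\ref{Tpersist} the latter, which is why the hypothesis $(h^0,Q^0)\ne(0,0)$ is essential. The constants are uniform because $\underline Q$, $\underline S_1$, and $C_0$ are.
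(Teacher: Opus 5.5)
Your proof is correct and follows essentially the same route as the paper. For $S_1$ you bound the consumption terms in \eqref{C4ax} by $r(S_1,0)(\beta h+Q)$, with $\beta h+Q$ eventually bounded; for $S_2$ you use the persistence of $Q$ and $S_1$ from Lemma~\ref{Tpersist} in \eqref{C4bx}. The differences are only technical and swapped: you use a Lipschitz bound $r(x,0)\le Lx$ with linear comparison for $S_1$ (which tacitly needs $S_1\le S^*+1$ eventually, supplied by \eqref{29c}) and a barrier argument near $S_2=0$, whereas the paper uses a continuity barrier for $S_1$ and the bound $S_2\le S^*+\varepsilon$ to obtain a linear inequality for $S_2$.
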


\begin{proof}
Let $(h^0,Q^0)\neq(0,0)$.
It follows from~\eqref{ccc1x} and the monotonicity~\eqref{r2} of $r$ that
\begin{equation}\label{depthQ}
\int_0^1 r\big(u_1(t,y),u_2(t,y)\big)\,\rd y\le r\big(S_1(t),S_2(t)\big)\le r\big(S_1(t),0\big)\,,\qquad t\ge 0\,.
\end{equation}
We first bound $S_1$ from below. Note from~\eqref{Wbound} that there exists $T_0\ge0$ (depending on the initial value) such that
\begin{equation}\label{barhQ}
\beta h(t)+Q(t)\le \bar c:=\frac{D(k_1+1)S^*}{q}+1\,,\quad t\ge T_0\,.
\end{equation}
We then deduce from~\eqref{depthQ},~\eqref{barhQ}, and~\eqref{C4ax} that
\[
S_1'(t)\ge D\big(S^*-S_1(t)\big)-\bar c \, r\big(S_1(t),0\big)\,,\quad t\ge T_0\,.
\]
The right-hand side is continuous in $S_1(t)$ and equals $DS^*>0$ at  points with $S_1(t)=0$ since $r(0,0)=0$
by~\eqref{r2}, so there is $\underline{S}_1\in(0,S^*)$, independent of the initial value, with
$S_1'(t)\ge\tfrac12 DS^*>0$ whenever $t\ge T_0$ and $S_1(t)\le\underline{S}_1$, 
hence
\begin{equation*}
\liminf_{t\to\infty}S_1(t)\ge\underline{S}_1\,.
\end{equation*}
Regarding $S_2$ we infer from~\eqref{C4bx} that
\[
S_2'(t)\ge-DS_2(t)+r\big(S_1(t),S_2(t)\big)Q(t)\,,\quad t\ge0\,.
\]
Fix $\varepsilon\in\big(0,\min\{\underline{S}_1,\underline Q\}\big)$. Then Lemma~\ref{Tpersist}, the bound
on $S_1$ above, and~\eqref{29c}, ensure that there is
$T_\varepsilon>0$ such that
\[
S_1(t)\ge\underline{S}_1-\varepsilon\,,\quad Q(t)\ge\underline Q-\varepsilon\,,\quad
S_2(t)\le S^*+\varepsilon
\]
for $t\ge T_\varepsilon$
so that, by the monotonicity of $r$ from~\eqref{r2},
\[
S_2'(t)\ge-DS_2(t)+r\big(\underline{S}_1-\varepsilon,S^*+\varepsilon\big)(\underline Q-\varepsilon)\,,\quad t\ge T_\varepsilon\,.
\]
Therefore, $$\liminf_{t\to\infty}S_2(t)\ge \frac{r\big(\underline{S}_1-\varepsilon,S^*+\varepsilon\big)(\underline Q-\varepsilon)}{D}>0$$
which proves the claim.
\end{proof}

Together with the boundedness of Theorem~\ref{T1}, the lower bounds~\eqref{persist}
and~\eqref{persistS} confine every solution with $(h^0,Q^0)\neq(0,0)$ to a compact subset of
$\{h>0, S_1>0, S_2>0, Q>0\}$ in the long-time limit. This separation from the boundary plays a key role in proving the global stability of the nontrivial equilibrium established in Theorem~\ref{Tglobal}.

\section{Global Stability of the  Persistence Equilibrium}\label{sec:7}

Theorem~\ref{T6} establishes the local asymptotic stability of the nontrivial steady state. We now provide conditions under which it is globally asymptotically stable. To this end, we invoke the geometric approach of Li and Muldowney~\cite{LiMuldowney1996}, which excludes nonconstant periodic, homoclinic, and heteroclinic orbits. This criterion has been successfully applied in population dynamics; see, for instance, the example in \cite{LiMuldowney1996} or \cite{Li-Smith-Wang_01}.\\

Recall from~\eqref{29c} that
\[
S_1(t)+S_2(t)-S^*=\big(S_1^0+S_2^0-S^*\big)e^{-Dt}\,,\qquad t\ge0\,.
\]
Hence the plane $\{S_1+S_2=S^*\}$ is forward invariant and attracts every solution at the exponential rate $D$. The long-time dynamics is therefore  governed by the system obtained from~\eqref{UU} by setting
$S_2=S^*-S_1$. Denoting by
\[
R(h,S_1):=\int_0^1 r\big(u[h,S_1,S^*-S_1](y)\big)\,\rd y
\]
the reaction integral of~\eqref{UU} and setting 
$$
{\rm r}(S_1):=r(S_1,S^*-S_1)\,,
$$
this system reads, for $h>0$, $0<S_1<S^*$, and $Q>0$,
\begin{subequations}\label{RED}
\begin{align}
h' &= \big(k_1 R(h,S_1)-b-d(h)\big)h+\tfrac{\alpha}{\beta}Q\,,\label{REDh}\\
S_1' &= D\big(S^*-S_1\big)-{\rm r}(S_1)\,Q-\beta h R(h,S_1)\,,\label{REDS}\\
Q' &= \big(k_1 {\rm r}(S_1)-k_2-\alpha\big)Q+\beta d(h) h\,.\label{REDQ}
\end{align}
\end{subequations}
With $S_2=S^*-S_1$, every solution $(h,S_1,Q)$ of~\eqref{RED} solves~\eqref{C} on this forward invariant
plane, so the persistence established in Theorem~\ref{persistence} holds
along~\eqref{RED} by restriction.
If $h>0$ and
$0<S_1<S^*$, then~\eqref{r2} implies
\begin{subequations}\label{pos}
\begin{equation}\label{r12}
{\rm r}_{12}(S_1):=\dfrac{\rd}{\rd S_1}{\rm r}(S_1)=\partial_1 r(S_1,S^*-S_1)-\partial_2 r(S_1,S^*-S_1)
>0
\end{equation}
and, together with~\eqref{ccc1},
\begin{equation}\label{Rsigns}
0\le R(h,S_1)\le {\rm r}(S_1)\le {\rm r}(S^*)=r(S^*,0)\,,
\end{equation}
while~\eqref{G} and~\eqref{GR} ensure that
\begin{equation}\label{Ldepth}
G_{12}:=\partial_{S_1}R(h,S_1)\ge 0\,,\qquad H:=\partial_h R(h,S_1)\le 0\,.
\end{equation}
\end{subequations}
Note then that the Jacobian $J=J(h,S_1,Q)$ of~\eqref{RED} at the general state~$(h,S_1,Q)\in\R_+^3$ is given by
{\small
\begin{equation*}
J=\left(\begin{matrix}
k_1 R(h,S_1)+\big(k_1H-d'(h)\big)h-b-d(h) & k_1 hG_{12} & \frac{\alpha}{\beta}\\[4pt]
-\beta\big(R(h,S_1)+hH\big) & -D-{\rm r}_{12}(S_1)Q-\beta hG_{12} & -{\rm r}(S_1)\\[4pt]
\beta\big(d(h)+d'(h)h\big) & k_1 {\rm r}_{12}(S_1)Q & k_1 {\rm r}(S_1)-k_2-\alpha
\end{matrix}\right) .
\end{equation*}}
Let 
\begin{equation}\label{mu1}
\mu_1(A):=\max_{1\le j\le3}\Big(a_{jj}+\sum_{\substack{1\le i\le3\\ i\ne j}}|a_{ij}|\Big)
\end{equation}
be the Lozinski\u{\i} measure of a $(3\times3)$--matrix $A=(a_{ij})_{1\le i,j\le3}$ with respect to the $\ell^1$--norm. The geometric approach of Li and Muldowney \cite{LiMuldowney1996,Li-Smith-Wang_01} is based on a Bendixson criterion, which requires the construction of a nonsingular matrix $P=\big(p_{ij}(h,S_1,Q)\big)_{1\le i,j\le3}$ such that
\begin{equation}\label{mu1X}
\mu_1\Big(\Big(\frac{\rd}{\rd t}p_{ij}\Big)_{1\le i,j\le3}P^{-1}+PJ^{[2]}P^{-1}\Big)<0
\qquad\text{ in }\quad\{h>0, 0<S_1<S^*, Q>0\}\,,
\end{equation}
where $\tfrac{\rd}{\rd t}p_{ij}$ is the derivative of the entry $p_{ij}=p_{ij}(h,S_1,Q)$ along~\eqref{RED} and
\begin{equation*}
J^{[2]}=\left(\begin{matrix}
J_{11}+J_{22} & J_{23} & -J_{13}\\
J_{32} & J_{11}+J_{33} & J_{12}\\
-J_{31} & J_{21} & J_{22}+J_{33}
\end{matrix}\right).
\end{equation*}
is the second additive compound matrix of $J=(J_{ij})_{1\le i,j\le3}$ (see \cite{Muldowney1990} or \cite[Appendix]{Li-Smith-Wang_01}).
We take the constant diagonal matrix
\begin{equation*}
P:=\begin{pmatrix} k_1 & 0 & 0\\[2pt] 0 & 1 & 0\\[2pt] 0 & 0 & \frac{k_1}{\beta} \end{pmatrix},
\end{equation*}
whose entries do not depend on $(h,S_1,Q)$, so that $\tfrac{\rd}{\rd t}p_{ij}=0$ and condition~\eqref{mu1X} reduces to 
$\mu_1\big(PJ^{[2]}P^{-1}\big)<0$. Since $P$ is diagonal, we have $\big(PJ^{[2]}P^{-1}\big)_{ij}=(p_i/p_j)\big(J^{[2]}\big)_{ij}$ for
$1\le i,j\le3$ with  $(p_1,p_2,p_3)=(k_1,1,k_1/\beta)$. Consequently,
\[
PJ^{[2]}P^{-1}=\begin{pmatrix}
J_{11}+J_{22} & -k_1{\rm r}(S_1) & -\alpha\\[4pt]
{\rm r}_{12}(S_1)Q & J_{11}+J_{33} & \beta hG_{12}\\[4pt]
-\big(d(h)+d'(h)h\big) & -k_1\big(R(h,S_1)+hH\big) & J_{22}+J_{33}
\end{pmatrix}.
\]
By~\eqref{pos} and $d(h)+d'(h)h>0$, every off--diagonal entry of this matrix has a definite sign except for
 $-k_1\big(R(h,S_1)+hH\big)$, which alone keeps its absolute value when computing $\mu_1\big(PJ^{[2]}P^{-1}\big)$.  It thus follows from~\eqref{mu1} that
\begin{equation}\label{colsum}
\begin{aligned}
\mu_1\big(PJ^{[2]}P^{-1}\big)
&=\max\left\{
\begin{aligned}
&k_1 R(h,S_1)+\big(k_1 H-\beta G_{12}\big)h-b-D,\\
&k_1\big(R(h,S_1)+hH\big)+k_1\big|R(h,S_1)+hH\big|+2k_1 {\rm r}(S_1)\\
&\qquad-\alpha-b-d(h)-d'(h)h-k_2,\\
&k_1 {\rm r}(S_1)-{\rm r}_{12}(S_1)\,Q-D-k_2
\end{aligned}
\right\}.
\end{aligned}
\end{equation}

With these preparations in place, we can prove the global stability of the persistence equilibrium.

\begin{thm}\label{Tglobal}
Assume~\eqref{GAshooting},~\eqref{ZZ},~\eqref{d1}, and
\begin{equation}\label{C1C2}
k_1 r(S^*,0)<b+D\qquad\text{and}\qquad 4k_1 r(S^*,0)<\alpha+b+k_2\,.
\end{equation}
Then the persistence equilibrium $(h_\star,S_{\star,1},S_{\star,2},Q_\star)$ of Theorem~\ref{T5} is globally
asymptotically stable; that is, it attracts every solution of~\eqref{C} with $(h^0,Q^0)\neq(0,0)$.
\end{thm}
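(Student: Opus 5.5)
We reduce to the three-dimensional system~\eqref{RED} on the attracting plane $\{S_1+S_2=S^*\}$ and apply the Li--Muldowney global stability theorem~\cite{LiMuldowney1996}. Its hypotheses are: (a) a compact absorbing set $K$ in the interior of an open set $\Omega\subset\R^3$; (b) a unique equilibrium in $\Omega$, which is locally asymptotically stable; (c) the Bendixson condition~\eqref{mu1X}, uniformly on $K$. Then every solution in $\Omega$ converges to that equilibrium.

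For (a) we take $\Omega=\{h>0,\,0<S_1<S^*,\,Q>0\}$. Theorem~\ref{T1} supplies upper bounds, and Theorem~\ref{persistence}, restricted to the invariant plane, supplies uniform lower bounds $\underline h,\underline S_1,\underline Q$. The bound $\underline S_2$ gives $S_1\le S^*-\underline S_2$ eventually. Hence the compact set
\[
K:=[\underline h/2,\bar h]\times[\underline S_1/2,S^*-\underline S_2/2]\times[\underline Q/2,\bar Q]
\]
absorbs every solution starting in $\Omega$. For (b), uniqueness of the interior equilibrium is Theorem~\ref{T5}. Local asymptotic stability for the reduced system follows from Theorem~\ref{T6}: the $3\times3$ block $A'$ in~\eqref{MA} is exactly the Jacobian $J$ at $X_\star$ after using~\eqref{b1}, and Appendix~\ref{App} shows it is Hurwitz under~\eqref{d1}.

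The main step is (c): showing each of the three entries in~\eqref{colsum} is negative on $\Omega$, uniformly on $K$. We use $H\le0$, $G_{12}\ge0$, $0\le R\le{\rm r}(S_1)\le r(S^*,0)$ and ${\rm r}_{12}>0$ from~\eqref{pos}.
\begin{itemize}
\item First entry: it is bounded by $k_1r(S^*,0)-b-D<0$, which is the first condition in~\eqref{C1C2}.
\item Third entry: it is bounded by $k_1r(S^*,0)-D-k_2\le -D<0$, by~\eqref{Z1x}.
\item Second entry: the quantity $X:=R+hH$ may have either sign. If $X\le0$, the first two terms cancel, and the entry is at most $2k_1r(S^*,0)-\alpha-b-k_2$. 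If $X>0$, then $X\le R\le r(S^*,0)$ since $hH\le0$, so the entry is at most $4k_1r(S^*,0)-\alpha-b-k_2$. In both cases the second condition in~\eqref{C1C2} makes it negative. We have also dropped the nonpositive terms $-d(h)-d'(h)h$.
\end{itemize}
All three bounds are constants independent of the state, so $\mu_1(PJ^{[2]}P^{-1})\le-\gamma<0$ on $\Omega$. This gives a negative Lozinski\u{\i} time-average along every orbit in $K$, which excludes nonconstant periodic, homoclinic and heteroclinic orbits (closed rectifiable curves invariant under the flow).

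Finally, fix a solution of~\eqref{C} with $(h^0,Q^0)\ne(0,0)$. The system~\eqref{UU} is asymptotically autonomous toward~\eqref{RED}, since $S_1+S_2\to S^*$ exponentially. Its $\omega$-limit set is nonempty, compact and invariant, lies in the plane, and lies in $K$ by Theorem~\ref{persistence}. It is therefore a compact invariant set of~\eqref{RED} contained in $\Omega$. By the Li--Muldowney argument (limit sets of the reduced flow in $\Omega$ reduce to the equilibrium), together with a Markus/Thieme-type result on asymptotically autonomous systems, it equals $\{X_\star\}$. Hence $(h,S_1,S_2,Q)(t)\to(h_\star,S_{\star,1},S_{\star,2},Q_\star)$. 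Combined with local stability this gives global asymptotic stability.

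The step I expect to be delicate is not (c), whose estimates are crude, but this last transfer. Li--Muldowney is stated for orbits of~\eqref{RED} in $\Omega$, while our orbits start off the plane. I would first try invoking the chain-transitivity of $\omega$-limit sets of asymptotically autonomous semiflows. If that is awkward, I would instead apply Li--Muldowney directly to the full four-dimensional system, adding the row $S_1+S_2$, which has eigenvalue $-D$.
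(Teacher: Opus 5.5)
Your proposal is correct and is essentially the paper's proof. The steps match one for one: restriction to \eqref{RED} on $\mathcal D$, a compact absorbing set built from the persistence bounds, uniqueness and local stability from Theorems~\ref{T5} and~\ref{T6}, the same three column-sum estimates for $\mu_1(PJ^{[2]}P^{-1})$ (your sign split on $R+hH$ is the paper's bound $2k_1\max\{R+hH,0\}\le 2k_1 r(S^*,0)$), and the transfer back to \eqref{C} via Thieme's convergence theorem~\cite[Theorem~4.1]{Thieme1992}, which applies directly because the $\omega$-limit set lies in $\mathcal K\subset\mathcal D$, where every \eqref{RED}-orbit tends to $X_\star$. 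The only slip is cosmetic: $A'$ in~\eqref{MA} is the Jacobian in the coordinates $(h,S_2,Q)$ rather than $(h,S_1,Q)$, so $A'=\mathrm{diag}(1,-1,1)\,J(X_\star)\,\mathrm{diag}(1,-1,1)$ rather than $J(X_\star)$ itself, which has the same spectrum and changes nothing.
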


\begin{proof} 
As pointed out above, we may restrict to the system~\eqref{RED} for which we verify the four hypotheses of~\cite[Theorem~3.5]{LiMuldowney1996} with respect to
the open, simply connected set 
\[
\mathcal D:= \{h>0,0<S_1<S^*,Q>0\}\, .
\] 
Along~\eqref{RED}, we obtain for any initial value in $\mathcal D$ that $h$ and $Q$
are bounded above due to~\eqref{barhQ}, say by $\overline{h}$ and $\overline{Q}$, respectively, and below by~\eqref{persist}, and 
with $S_2=S^*-S_1$ we have 
\[
\underline{S}_1\le\liminf_{t\to\infty}S_1(t)\le\limsup_{t\to\infty}S_1(t)\le S^*-\underline{S}_2\,.
\]
Thus, its $\omega$-limit set lies in the compact subset
\[
\mathcal{K}:=[\underline h,\overline h]\times[\underline{S}_1,S^*-\underline{S}_2]\times[\underline Q,\overline Q]
\]
of $\mathcal D$. As $\mathcal K$ is compact and, by~\eqref{persist} and~\eqref{persistS}, bounded away from
$\partial\mathcal D$, every solution eventually enters an arbitrarily small neighborhood of $\mathcal K$ in
$\mathcal D$, which is the compact absorbing set required in assumption~$(H_2)$ of~\cite{LiMuldowney1996}
(cf.~the persistence attractor of~\cite[Theorem~5.7]{SmithThieme2011}).
By Theorem~\ref{T5}, the
point $(h_\star,S_{\star,1},Q_\star)$ is the only equilibrium in $\mathcal D$, and by Theorem~\ref{T6} it is locally
asymptotically stable. This is condition $(H_3)$ of \cite{LiMuldowney1996}.

Let $P$  and $J$ be as above. It then remains to bound the three column sums~\eqref{colsum}, which is the last condition of~\cite{LiMuldowney1996}. Since~\eqref{pos} and~\eqref{d} entail that
\begin{align*}
R(h,S_1)&\le r(S^*,0)\,,\qquad(k_1 H-\beta G_{12})h\le0\,,\\
 -{\rm r}_{12}(S_1)Q&\le0\,,\qquad -(d(h)+d'(h)h)\le0\,,
\end{align*}
and
\[
k_1\big(R(h,S_1)+hH\big)+k_1\big|R(h,S_1)+hH\big|=2k_1\max\big\{R(h,S_1)+hH, 0\big\}\le 2k_1 r(S^*,0)\,,
\]
the three entries of~\eqref{colsum} satisfy
\[
k_1 R(h,S_1)+\big(k_1 H-\beta G_{12}\big)h-b-D\le k_1 r(S^*,0)-b-D\,,
\]
and 
\begin{align*}
&k_1\big(R(h,S_1)+hH\big)+k_1\big|R(h,S_1)+hH\big|+2k_1 {\rm r}(S_1)\\
&\qquad -\alpha-b-d(h)-d'(h)h-k_2\le 4 k_1 r(S^*,0)-\alpha-b-k_2\,,
\end{align*}
and 
\[
k_1 {\rm r}(S_1)-{\rm r}_{12}(S_1) Q-D-k_2\le k_1 r(S^*,0)-D-k_2\,.
\]
It then follows that
\[
\mu_1\big(PJ^{[2]}P^{-1}\big)\le-\theta<0\qquad\text{on}\quad\mathcal D\,,
\]
where
\[
\theta:=\min\big\{b+D-k_1 r(S^*,0), \alpha+b+k_2-4k_1 r(S^*,0), D+k_2-k_1 r(S^*,0)\big\}>0
\]
 due to~\eqref{C1C2} and~\eqref{Z1x}.
 Consequently, ~\cite[Theorem~3.5]{LiMuldowney1996} implies that the equilibrium $(h_\star,S_{\star,1},Q_\star)$  of~\eqref{RED} is globally asymptotically stable in $\mathcal{D}$.
By the exponential convergence $S_1(t)+S_2(t)\to S^*$ of~\eqref{29c}, the $(h,S_1,Q)$--dynamics
from~\eqref{C} is asymptotically autonomous with limit system~\eqref{RED}, and~\cite[Theorem~4.1]{Thieme1992}
now yields the global asymptotic stability of $(h_\star,S_{\star,1},S_{\star,2},Q_\star)$ for~\eqref{C}.
\end{proof}

 In Appendix~\ref{AppN} we provide an example with a regime, where conditions~\eqref{ZZ},~\eqref{d1}, and~\eqref{C1C2} are satisfied.

\begin{figure}[h]
\centering
\scalebox{0.85}{%
\begin{tikzpicture}[>={Stealth[length=2.2mm]}, line cap=round, line join=round,
  orb/.style={blue!55!black, line width=0.9pt, postaction={decorate},
    decoration={markings, mark=at position #1 with {\arrow{Stealth}}}}, orb/.default=0.62,
  offorb/.style={blue!48!black, line width=0.9pt, densely dashed, postaction={decorate},
    decoration={markings, mark=at position #1 with {\arrow{Stealth}}}}, offorb/.default=0.62,
  stab/.style={circle, fill=black, draw=black, inner sep=0pt, minimum size=3.6pt},
  unst/.style={circle, fill=white, draw=red!62!black, line width=0.9pt, inner sep=0pt, minimum size=4pt}]
  \shade[top color=gray!3, bottom color=gray!14] (0,0) -- (8,0) -- (10,2.4) -- (2,2.4) -- cycle;
  \draw[gray!40, line width=0.4pt] (0,0) -- (2,2.4);
  \draw[gray!55, line width=0.6pt] (2,2.4) -- (10,2.4) -- (8,0);
  \node[gray!72, font=\scriptsize\itshape, rotate=50.19, anchor=center] at (1.4,1.2) {$\{S_1{+}S_2{=}S^*\}$};
  \draw[red!62!black, line width=0.9pt] (0,0) -- (8,0);
  \node[red!62!black, font=\scriptsize, anchor=north] at (5.6,-0.03) {$\{h=0\}$};
  \node[unst, label={[red!62!black, font=\scriptsize]below:$X_0$}] (E0) at (2.05,0) {};
  \draw[->, red!62!black, line width=0.7pt] (E0) -- ++(52:1.0);
  \begin{scope}
    \clip (0,0) -- (8,0) -- (10,2.4) -- (2,2.4) -- cycle;
    \fill[teal!9] (3.58,0.815) -- (6.08,0.815) -- (6.72,1.585) -- (4.22,1.585) -- cycle;
  \end{scope}
  \draw[teal!55!black, densely dashed, line width=0.7pt]
        (3.58,0.815) -- (6.08,0.815) -- (6.72,1.585) -- (4.22,1.585) -- cycle;
  \node[teal!50!black, font=\scriptsize] at (6.95,1.6) {$\mathcal{K}$};
  \draw[orb=0.62] (2.6,1.72) .. controls (3.5,1.38) and (4.25,1.27) .. (4.88,1.23);
  \draw[orb=0.62] (3.3,0.5)  .. controls (4.15,0.86) and (4.62,1.04) .. (5.02,1.11);
  \draw[orb=0.62] (7.6,0.6)  .. controls (6.7,0.86)  and (5.95,1.04) .. (5.3,1.11);
  \draw[orb=0.62] (8.4,1.72) .. controls (7.3,1.38)  and (6.2,1.27)  .. (5.42,1.23);
  \draw[offorb=0.62] (3.0,2.95) .. controls (3.85,2.05) and (4.62,1.62) .. (4.9,1.42);
  \draw[offorb=0.62] (4.55,3.2) .. controls (4.78,2.2) and (5.02,1.66) .. (5.1,1.43);
  \draw[offorb=0.62] (7.2,2.95) .. controls (6.4,2.05) and (5.68,1.62) .. (5.4,1.42);
  \node[stab, label={[font=\scriptsize]below right:$X_\star$}] (Es) at (5.15,1.2) {};
  \draw[->, gray!75, line width=0.55pt] (-0.5,-0.35) -- (1.15,-0.35) node[right=-1pt, black, font=\scriptsize]{$S_1$};
  \draw[->, gray!75, line width=0.55pt] (-0.5,-0.35) -- (0.12,0.42) node[above left=-3pt, black, font=\scriptsize]{$h$};
\end{tikzpicture}}%
\caption{ Schematic of the global dynamics on the invariant plane (a projection to $(h,S_1)$, with
$S_2=S^*-S_1$ and $Q$ suppressed). 
Any deviation $S_1^0+S_2^0-S^*$ collapses at an exponential rate onto the forward invariant plane $\{S_1+S_2=S^*\}$ for~\eqref{RED}. 
Solid curves represent orbits of~\eqref{RED} with $S_1^0+S_2^0=S^*$
while dashed curves represent solutions of~\eqref{UU} with $S_1^0+S_2^0\neq S^*$, which approach the plane. On the face $\{h=0\}$ the trivial equilibrium $X_0=(0,S^*,0,0)$ is unstable and a uniform weak
repeller. By uniform persistence (Theorem~\ref{persistence}), every solution
with  $(h^0,Q^0)\neq(0,0)$ has its $\omega$-limit set in the compact set $\mathcal{K}\subset\mathcal D$ and
converges  to the nontrivial equilibrium $X_\star=(h_\star,S_{\star,1},S_{\star,2},Q_\star)$ (Theorem~\ref{Tglobal}).
}
\label{fig:globaldynamics}
\end{figure}

\begin{appendix}

\nequation
\aequation

\section{Proof of Theorem~\ref{T6}}\label{App}

To finish off the proof of Theorem~\ref{T6} it remains to verify that all eigenvalues of the $(3\times 3)$-matrix $A'=(a_{ij})_{1\le i,j\le 3}$ in \eqref{MA} have negative real parts. According to the Routh-Hurwitz-Criterion this is the case provided that
\begin{align}\label{mpos}
m_0>0\,,\quad m_1>0\,,\quad m_2>0\,,\quad m_1m_2-m_0>0\,,
\end{align}
where 
\begin{align*}
&m_0:=-\det(A')\,,\qquad m_1:=-\mathrm{trace}(A')\,,\\
&m_2:=\det\left(\begin{matrix}
a_{11} & a_{12} \\
a_{21} & a_{22}
\end{matrix}\right)
+\det\left(\begin{matrix}
a_{11} & a_{13} \\
a_{31} & a_{33}
\end{matrix}\right)
+\det
\left(\begin{matrix}
a_{22} & a_{23} \\
a_{32} & a_{33}
\end{matrix}\right)\,.
\end{align*}
We verify~\eqref{mpos} in four steps.\\

\noindent{\bf (i)} As for $m_0$ one computes
\begin{align*}
m_0=&
-\Big( k_1 r_{12}(S_\star) r(S_\star)   Q_\star   +   D\Delta +r_{12}(S_\star) Q_\star  \big[\Delta-\alpha\big]\Big)k_1  h_\star  H\\
&+ \Big(\Delta  G_{12}  \beta  h_\star+Q_\star \alpha  r_{12}(S_\star) \Big)k_1  R\\
& +\Big(k_1 \left(G_{12}  \beta  h_\star+r_{12}(S_\star) Q_\star \right) r (S_\star )+\left[\Delta -\alpha \right] \left(\beta  h_\star G_{12} +r_{12}(S_\star) Q_\star +D\right)\Big) h_\star d'(h_\star) \\ 
&+\frac{k_1}{\Delta} \Big(\Delta  G_{12}  \beta  h_\star+Q_\star \alpha  r_{12}(S_\star) \Big) d(h_\star) r (S_\star )
\end{align*}
and thus $m_0>0$ since each term is nonnegative owing to~\eqref{Z}, where we use ~\eqref{Delta1} for the terms in square brackets, the last term being positive since $r_{12}(S_\star)>0$, $r(S_\star)>0$, $Q_\star>0$, and $d(h_\star)>0$.\\

\noindent{\bf (ii)} As for $m_1$ one computes
\begin{align*}
m_1=&
 - k_1 h_\star H +\frac{\alpha  d(h_\star)}{\Delta}+d'(h_\star)  h_\star+\beta  h_\star G_{12} +r_{12}(S_\star) Q_\star +D+\Delta
\end{align*}
and thus $m_1>0$ since each term is nonnegative owing to~\eqref{Z} and ~\eqref{Delta1}, the summands $D$ and $\Delta$ being positive.\\

\noindent{\bf (iii)} 
As for $m_2$ one computes
\begin{align*}
m_2=&-\Big(  \Delta +  r_{12}(S_\star) Q_\star +D\Big) k_1 h_\star H +  \beta k_1 G_{12} R h_\star\\
& +\Big(\beta  h_\star G_{12} +r_{12}(S_\star) Q_\star +D\Big)\Big(h_\star d'(h_\star) +\frac{\alpha d(h_\star)}{\Delta} +\Delta\Big)\\
& +[\Delta -\alpha] h_\star d'(h_\star)+ k_1r (S_\star )  r_{12}(S_\star) Q_\star
 \end{align*}
and thus $m_2>0$ since each term is nonnegative owing to~\eqref{Z}, where we use ~\eqref{Delta1}  for the term in square brackets, the third summand being bounded below by $D\Delta>0$.\\

\noindent{\bf (iv)} As for $m_3:=m_1m_2-m_0$ one computes, using~\eqref{Q},
\begin{align*}
\Delta m_3=&
\left(- h_\star\Delta   k_1 H +d'(h_\star)  h_\star\Delta +\Delta^{2} +  \alpha  d(h_\star)\right) \mathrm{D}^{2} +D\Delta   k_1^{2} h_\star^{2} H^{2}\\
&\ +D\left(- k_1  h_\star \Delta   H + k_1  \Delta   R +2 \Delta   d'(h_\star)  h_\star  +2\Delta^{2}+  2\alpha  d(h_\star)\right) \beta   h_\star G_{12}\\
&\ -2D\Big(    \Delta  d'(h_\star)h_\star + \beta  r_{12}(S_\star) d(h_\star) h_\star+\Delta^{2}+  \alpha  d(h_\star)  \Big) k_1  h_\star H +D\Delta  d'(h_\star)^{2} h_\star^{2}\\
& \ +2D\Big( \beta r_{12}(S_\star)  d(h_\star) h_\star +\Delta^2+\alpha d(h_\star)\Big)  d'(h_\star)h_\star\\ 
& \ + D\beta \left( k_1  r(S_\star) +2\Delta+\frac{2\alpha d(h_\star)}{\Delta}\right)  d(h_\star) h_\star r_{12}(S_\star)+\frac{D}{\Delta}\left(\Delta^{2}+  \alpha  d(h_\star)\right)^2 \\
&\ +\Big( k_1   \Delta    R +\Delta  h_\star d'(h_\star) +\alpha  d(h_\star)+\Delta^2 \Big) h_\star^{2}\beta^{2} G_{12}^{2}\\
&\ -\left(k_1  \Delta  R +   \Delta  h_\star d'(h_\star) +\beta d(h_\star) h_\star  r_{12}(S_\star) +2\Delta^{2} +\alpha  d(h_\star)\right)\beta k_1 h_\star^{2} G_{12}   H\\
&\  +\Big(\Delta  d'(h_\star)h_\star + \beta d(h_\star)r_{12}(S_\star)h_\star+\alpha  d(h_\star)\Big) k_1 \beta h_\star    G_{12} R
+\beta\Delta  h_\star^{3}  G_{12} d'(h_\star)^{2}\\
&\ + 2\Big( \beta r_{12}(S_\star) h_\star +\alpha  \Big)\beta d'(h_\star)d(h_\star) h_\star^{2}G_{12} + \big[2\Delta-k_1 r(S_\star)\big]  \beta \Delta d'(h_\star) h_\star^{2}G_{12}\\
&\ +\left( k_1  r(S_\star) +2\Delta+\frac{2\alpha d(h_\star)}{\Delta}\right) \beta^2 d(h_\star) h_\star^{2} r_{12}(S_\star)G_{12}\\
&\ + \big[2\alpha-k_1 r(S_\star)\big] \beta \Delta d(h_\star) h_\star G_{12}+\left(\Delta^{3}+\frac{\alpha^2 d(h_\star)^2}{\Delta}\right)\beta h_\star G_{12}\\
& +\Big(\beta  d(h_\star) h_\star  r_{12}(S_\star) +  \Delta^{2}\Big)k_1^{2} h_\star^{2} H^{2}\\
&-\Bigg\{\left(2 \beta  d(h_\star) h_\star^{3}  k_1 r_{12}(S_\star) +\big[2\Delta-\alpha\big] k_1 \Delta h_\star^2    \right) d'(h_\star) +\frac{\beta^{2} d(h_\star)^{2} h_\star^{3}  k_1 }{\Delta} r_{12}(S_\star)^{2}\\
&\qquad + \Bigg(2\Delta+\frac{2\alpha d(h_\star)}{\Delta}+\alpha \Bigg) k_1\beta d(h_\star)h_\star^2 r_{12}(S_\star) +\left(\Delta^{2}+\alpha  d(h_\star)\right)  k_1  h_\star\Delta \Bigg\} H\\
& +\Big(\beta  d(h_\star) h_\star^{3} r_{12}(S_\star) +\big[\Delta-\alpha\big]  \Delta h_\star^2  \Big) d'(h_\star)^{2}\\
&+\Bigg\{\frac{\beta^{2} d(h_\star)^{2} h_\star^{3} r_{12}(S_\star)^{2}}{\Delta}+2\Bigg(\frac{\alpha d(h_\star)}{\Delta} +\Delta\Bigg)\beta d(h_\star)h_\star^2 r_{12}(S_\star)\\
&\qquad\qquad +\big[\Delta-\alpha\big]\big(\Delta^2+\alpha d(h_\star)\big)h_\star\Bigg\} d'(h_\star)\\
& +\frac{1}{\Delta^2}\Big( k_1 \Delta   r(S_\star) +\Delta^{2}+\alpha  d(h_\star)\Big) \beta^2  d(h_\star)^2 h_\star^2  r_{12}(S_\star)^{2}\\
&
\\
&+ \left(\Delta^{2}+2\alpha d(h_\star)+\frac{\alpha^2d(h_\star)^2}{\Delta^2}\right)\beta d(h_\star) h_\star r_{12}(S_\star)\\
& +\Big[\Delta r(S_\star)-\alpha R\Big] k_1\beta d(h_\star)h_\star r_{12}(S_\star)
\end{align*}
and thus $m_3>0$ since each term is nonnegative owing to~\eqref{Z}, where we use~\eqref{d1} and~\eqref{Delta1} for all square-bracketed factors except $[\Delta r(S_\star)-\alpha R]$, for which we note that
$$
\Delta r(S_\star)-\alpha R \ge \big[\Delta -\alpha\big] r(S_\star)\ge 0
$$
since $R\le r(S_\star)$ according to~\eqref{GH3}. 
 
Consequently, all eigenvalues of the linearization have a negative real part. This proves Theorem~\ref{T6}.

\section{Simulations}\label{AppN}
We illustrate the stability of the washout equilibrium (Theorem~\ref{P2}), the existence of the nontrivial equilibrium (Theorems~\ref{T5} and~\ref{T6}), and its global stability (Theorem~\ref{Tglobal}) by providing simulations for~\eqref{C} using the thermodynamic growth function from~\eqref{R:kinetics} and the linear detachment $d(h)=\delta h$. Multiplying out, \eqref{R:kinetics} reads 
\[
g(S_1,S_2)=\frac{\mu[S_1-\Gamma S_2]_+}{K+S_1}\, ,
\]
and serves here to illustrate the analysis carried out for the general $r$.

We use the parameters from Table~\ref{tab:sim}: the kinetic and transport constants follow~\cite{GHE21}, while the operating parameters $(\alpha,D,b)$ and the influent $S^*=1$ are illustrative. At $S^*=1$, the washout growth $
k_1r(S^*,0)=g(S^*,0)=\frac{\mu S^*}{K+S^*}=2.574$
exceeds $\max\{2\alpha,k_2\}=2.005$ in Case~1, so~\eqref{ZZ} and~\eqref{d1} fail and the washout equilibrium is stable (by~\eqref{S1},~\eqref{S2}), while it stays below $\min\{2\alpha,k_2\}=2.6$ in Cases~2 and~3, so~\eqref{ZZ} and~\eqref{d1} hold and the persistence equilibrium appears. The elevated $b=3$ of Case~1 models strong biofilm lysis rather than baseline decay.

\begin{table}[h]
\centering
\small
\setlength{\tabcolsep}{5pt}
\renewcommand{\arraystretch}{1.3}
\begin{tabular}{clccc}
\hline
\textbf{Symbol} & \textbf{Description} & \textbf{Case 1} & \textbf{Case 2} & \textbf{Case 3}\\ \hline
$\mu$ & maximal growth rate & \multicolumn{3}{c}{$2.6$}\\
$K$ & half-saturation constant & \multicolumn{3}{c}{$10^{-2}$}\\
$\Gamma$ & thermodynamic inhibition factor & \multicolumn{3}{c}{$1.503$}\\
$\kappa_1$ & diffusion coefficient of propionate & \multicolumn{3}{c}{$8.2\times10^{-5}$}\\
$\kappa_2$ & diffusion coefficient of acetate & \multicolumn{3}{c}{$9.4\times10^{-5}$}\\
$\delta$ & detachment coefficient & \multicolumn{3}{c}{$2280$}\\
$k_1$ & yield constant & \multicolumn{3}{c}{$1.19\times10^{-4}$}\\
$\beta$ & biofilm-bulk conversion factor & \multicolumn{3}{c}{$46.6$}\\
$S^*$ & influent propionate concentration & \multicolumn{3}{c}{$1$}\\ \hline
$\alpha$ & attachment rate & $1$ & $1.3$ & $1.3$\\
$D$ & dilution rate & $2$ & $5$ & $10$\\
$b$ & biomass loss rate & $3$ & $0.005$ & $0.005$\\
$k_2$ & removal rate of $Q$ ($=D+0.005$) & $2.005$ & $5.005$ & $10.005$\\ \hline
\end{tabular}
\vspace{0.2cm}
\caption{Parameters used in the numerical simulations. The shared kinetic and transport constants are taken from~\cite{GHE21}, and the operating parameters $(\alpha,D,b)$ are varied across the three cases.}\label{tab:sim}
\end{table}

In Case~1 both~\eqref{S1} and~\eqref{S2} hold, so the washout equilibrium $(h_\star,S_{\star,1},S_{\star,2},Q_\star)=(0,S^*,0,0)$ is locally asymptotically stable and the solution converges to it (Figure~\ref{fig:case1}). In Case~2 both~\eqref{ZZ} and~\eqref{d1} hold while~\eqref{C1C2} fails, and the solution converges to the numerically determined  persistence equilibrium
\[
(h_\star,S_{\star,1},S_{\star,2},Q_\star)=\big(5.0\times10^{-5},\,0.616,\,0.384,\,4.3\times10^{-5}\big)\,,\qquad S_{\star,1}+S_{\star,2}=S^*\,,
\]
as predicted by Theorems~\ref{T5} and~\ref{T6} (Figure~\ref{fig:case2}). In Case~3 the stronger condition~\eqref{C1C2} holds as well, and the solutions from five distinct initial conditions converge to the same interior state
\[
(h_\star,S_{\star,1},S_{\star,2},Q_\star)=\big(6.6\times10^{-5},\,0.629,\,0.371,\,4.2\times10^{-5}\big)\,,\qquad S_{\star,1}+S_{\star,2}=S^*\,,
\]
in agreement with the global stability of Theorem~\ref{Tglobal} (Figure~\ref{fig:case3}).

\begin{figure}[H]
\centering
\includegraphics[width=0.825\textwidth]{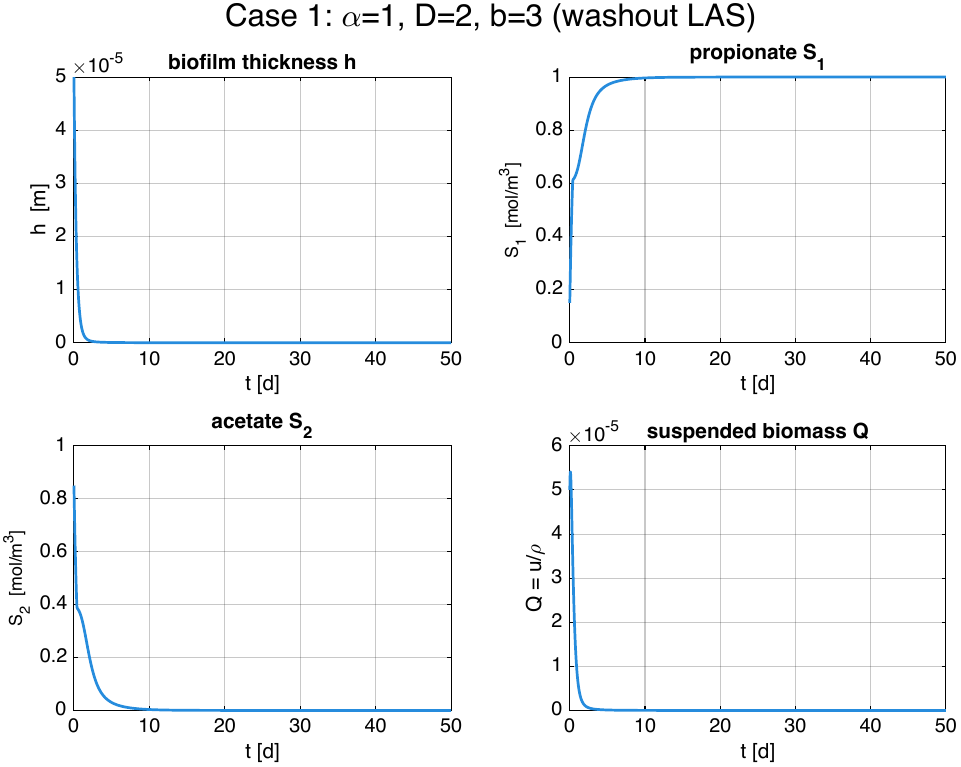}
\caption{Case~1. Convergence to the washout equilibrium of Theorem~\ref{P2} under~\eqref{S1},~\eqref{S2}.}\label{fig:case1}
\end{figure}


\begin{figure}[h]
\centering
\includegraphics[width=0.825\textwidth]{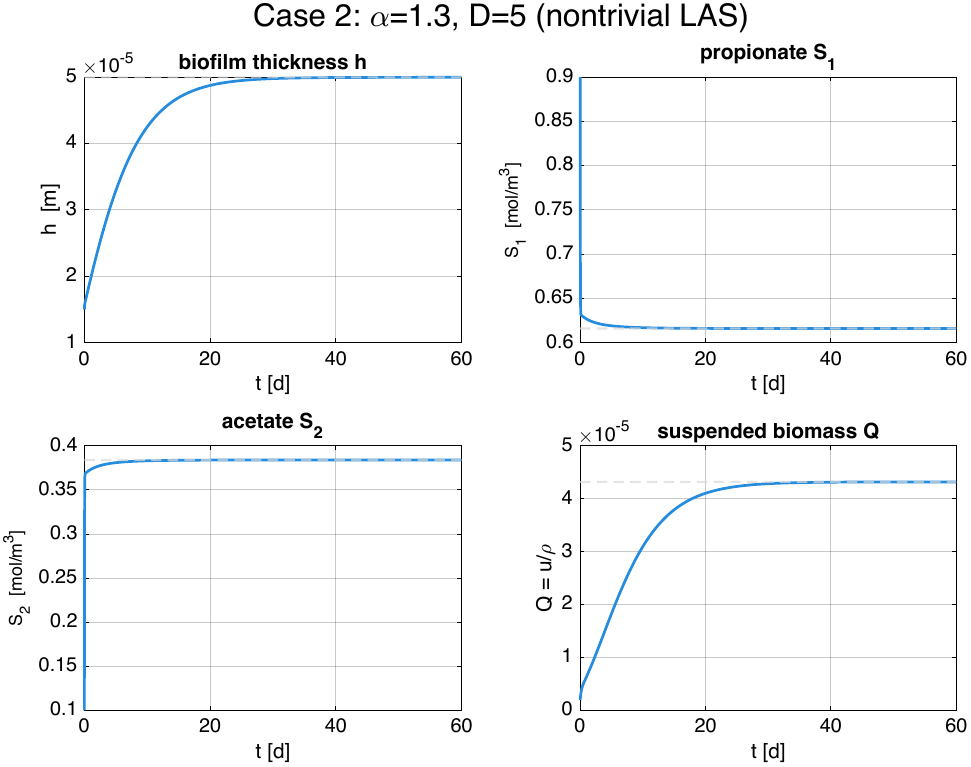}
\caption{Case~2. Convergence to the nontrivial equilibrium of Theorems~\ref{T5} and~\ref{T6} under~\eqref{ZZ},~\eqref{d1}.}\label{fig:case2}
\end{figure}


\begin{figure}[H]
\centering
\includegraphics[width=0.65\textwidth]{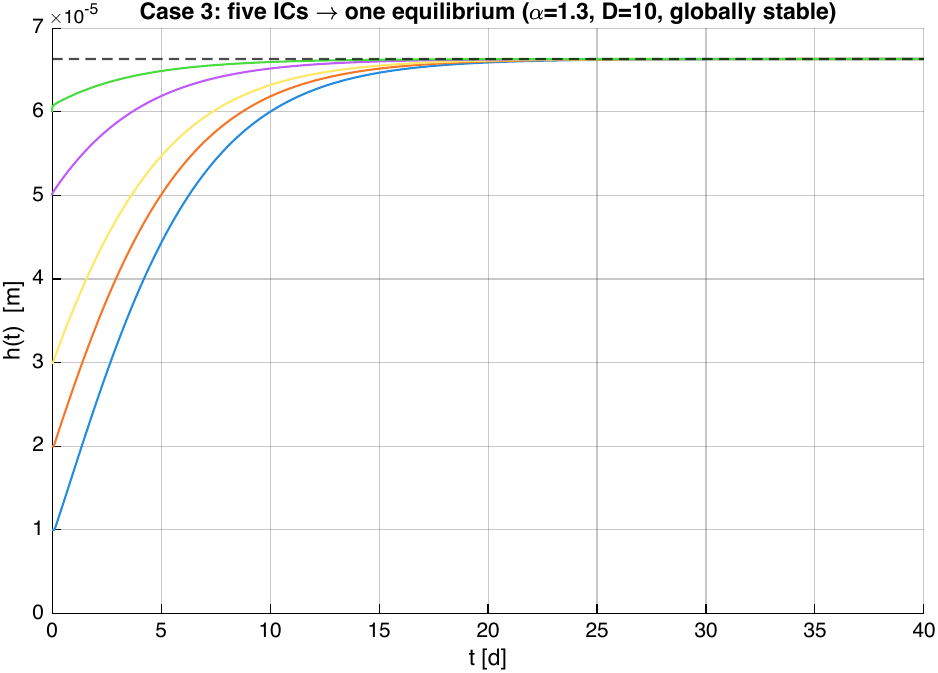}
\caption{Five $h$-trajectories from different initial conditions showing that all solutions converge to the unique nontrivial equilibrium $(h_\star,S_{\star,1},S_{\star,2},Q_\star)$, illustrating the global asymptotic stability of Theorem~\ref{Tglobal} under~\eqref{ZZ},~\eqref{d1},~\eqref{C1C2}.}\label{fig:case3}
\end{figure}

\end{appendix}

\bibliographystyle{siam} 
\bibliography{BiofilmLiterature}

\end{document}